\documentclass[letterpaper,12pt]{article}

\newtheorem{theo}{Theorem}[section]

\newtheorem{lem}[theo]{Lemma}
\newtheorem{defn}[theo]{Definition}

\newtheorem{cor}[theo]{Corollary}

\usepackage{thmtools}
\usepackage{thm-restate}
\newenvironment{proof}{\noindent {\sc Proof}.}
                {\phantom{a} \hfill \framebox[2.2mm]{ } \bigskip}
                
\usepackage[
  top=0.9in,
  bottom=0.9in,
  left=0.9in,
  right=0.9in
]{geometry}

\usepackage{algpseudocode}
\usepackage{float}
\usepackage{algorithm}

\usepackage[colorlinks=true,linkcolor=blue,citecolor=blue]{hyperref}
\usepackage[dvips]{graphics}
\usepackage{amsmath}
\usepackage{amsfonts,enumerate}
\usepackage{pdfpages,caption,geometry}
\usepackage{subcaption}
\usepackage{tikz}
\usepackage{mathrsfs}
\usepackage{boldline,multirow}
\usepackage{array, boldline, makecell, booktabs} 
\usepackage{pdfpages,caption,geometry}
\usepackage{siunitx}
\usepackage{numprint}
\usepackage{subcaption}
\usepackage{mathtools,hhline}
\usepackage{amsmath}
\usepackage{mathtools}
\usepackage{amsfonts,enumerate}
\usepackage{titling}
\usepackage{blindtext}
\usepackage{mathtools}

\DeclarePairedDelimiter\floor{\lfloor}{\rfloor}
\usepackage{hyperref}
\hypersetup{
    colorlinks=true,
    linkcolor=blue,
    filecolor=magenta,      
    urlcolor=cyan,
    pdftitle={Overleaf Example},
    pdfpagemode=FullScreen,
    }

\newcommand{\ZZ}{\mathbb{Z}}

\newcommand{\D}{\mathcal{D}}
\newcommand{\F}{\mathcal{F}}

\usepackage{eurosym,datatool,longtable, color,colortbl}
\usepackage{lscape}
\definecolor{RC}{rgb}{0.9,1,1}
\definecolor{Gray}{gray}{0.9}

\definecolor{PINK}{cmyk}{0, 0.2, 0.001, 0.001}

\definecolor{ao(english)}{rgb}{0.0,0.4,0.0}
\newcolumntype{g}{>{\columncolor{Gray}}c}
\newcolumntype{g}{>{\columncolor{Gray}}c}

\title{On the Generalized Honeymoon Oberwolfach Problem}

\author{Masoomeh Akbari\thanks{Department of Mathematics, University of Ottawa, ON, Canada.
Email: makba074@uottawa.ca}}

\begin{document}
\maketitle \baselineskip 18pt

\begin{abstract}

The generalized Honeymoon Oberwolfach Problem (HOP) asks whether it is possible to seat $2n$ participants consisting of $n$ newlywed couples at a conference with $s$ tables of size $2$ and $t$ ``round'' tables of sizes $2m_1, 2m_2, \ldots, 2m_t$, where  \(n = s + \sum_{i=1}^{t} m_i \) with all $m_i \geq 2$, over several nights so that each participant sits next to their spouse every time and next to each other participant exactly once. We denote this problem by $\mathrm{HOP}(2^{\langle s \rangle}, 2m_1, \ldots, 2m_t)$.

This paper is the first of two papers investigating the generalized HOP.  While the second paper will deal with the generalized HOP with a single {\em round} table (i.e. table of size at least $4$),  the present work develops solutions for the generalized HOP with multiple round tables.  In particular, we present solutions to certain cases with two round tables, showing  that a solution to $\mathrm{HOP}(2^{\langle s \rangle}, 2m_1, 2m_2)$ exists when  $n \equiv 1 \pmod{(2m_1 + 2m_2)}$ or $n \equiv m_1 + m_2 \pmod{(2m_1 + 2m_2)}$.  We also develop solutions for cases with small round tables, showing that  $\mathrm{HOP}(2^{\langle s \rangle}, 2m_1, \dots, 2m_t)$ has a solution whenever $m = m_1 + \dots + m_t \leq 10$,  $n = s + m$ is odd, and $n(n - 1) \equiv 0 \pmod{2m}$.

\end{abstract}

\section{Introduction}

The well-known Oberwolfach Problem (OP) was posed by Ringel at a conference in Oberwolfach in 1967. It asks whether it is possible to seat $n$ participants in a room with $t$ round tables of sizes $m_1, m_2, \ldots, m_t$ over several nights so that all tables are full at each meal and each participant sits beside every other participant exactly once.

In this paper, we focus on a variant of the OP called the Honeymoon Oberwolfach Problem (HOP), which is due to \v{S}ajna~\cite{LDMSaj}. 
In this variation, the problem is formulated as the question whether it is possible to seat $2m_1 + 2m_2 + \ldots + 2m_t = 2n$ participants, consisting of $n$ newlywed couples, at $t$ round tables of sizes $2m_1, 2m_2, \ldots, 2m_t$ for $2n - 2$ nights, so that each participant sits next to their spouse every night and next to every other participant exactly once. 

In the language of graph theory, a solution to the HOP corresponds to a decomposition of $K_{2n} + (2n - 3)I$ into $2$-factors. Here, the multigraph $K_{2n} + (2n - 3)I$ is obtained from the complete graph $K_{2n}$ by adjoining $2n-3$ additional copies of a fixed $1$-factor $I$, and the $2$-factors in the decomposition are vertex-disjoint unions of cycles of lengths $2m_1, 2m_2, \ldots, 2m_t$, where within each cycle, every other edge is a copy of an edge of $I$. This problem is denoted by HOP$(2m_1, 2m_2, \ldots, 2m_t)$. In the special case where $m_1 = m_2 = \ldots = m_t$ and $n = tm$, it is denoted by HOP$(2n; 2m)$. HOP has been studied by Jerade, Lepine and  Šajna, and some significant cases of it have been solved \cite{MRMSaj, LDMSaj}.

In {\cite {LDMSaj}}, the HOP was defined only for tables of size at least $4$. In this paper, we generalize the problem to include tables of size $2$, leading to a new version of the problem that has not been previously studied.
The generalized HOP preserves the original seating conditions of the HOP, except that the $2n$ participants are seated at $s$ tables of size $2$ and $t$ tables of sizes $2m_1, 2m_2, \ldots, 2m_t$, under the assumption that $n = s + m_1 + \ldots + m_t$ and all $m_i \geq 2$. We denote this problem by HOP$(2^{\langle s \rangle}, 2m_1, 2m_2, \ldots, 2m_t)$, and refer to tables of size at least $4$ as \emph{round tables}.

Our goal is to determine whether the obvious necessary conditions for HOP$(2^{\langle s \rangle}, \allowbreak 2m_1, \ldots, 2m_t)$ to have a solution are also sufficient. As an initial step toward this problem, we prove the following results in this paper.

\begin{restatable}{theo}{mainresultOne}{\label{theo-new-1}}
Let $s, m_1, m_2$ be integers such that $s \geq 0$ and $2 \leq m_1 \leq m_2$. Let $m = m_1 + m_2$ and $n = s + m$. Then {\rm HOP}$(2^{\langle s \rangle}, 2m_1, 2m_2)$ has a solution in each of the following cases:

\begin{enumerate}[\bf(i)]
    \item  $n \equiv 1 \pmod{2m}$
    \item  $m$ is odd and $n \equiv m \pmod{2m}$
\end{enumerate}
\end{restatable}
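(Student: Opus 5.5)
The plan is to reduce the problem to a cycle decomposition of the quotient graph obtained by contracting each couple. Identify each couple with a single vertex. A solution to HOP$(2^{\langle s\rangle},2m_1,2m_2)$ then corresponds to a decomposition of $K_{2n}+(2n-3)I$ into $2n-2$ suitable $2$-factors. Contracting the copies of $I$, each such $2$-factor becomes a spanning subgraph $F$ of $K_n$ consisting of an $m_1$-cycle, an $m_2$-cycle and $s$ isolated vertices. Here a $2$-cycle, i.e.\ a table of size $4$ with $m_i=2$, is interpreted as a double edge, handled through the standard ``lifting'' lemma.

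Conversely, each edge $uv$ of $K_n$ lifts to the four edges of $K_{2n}$ between couples $u$ and $v$. One $m$-cycle of $K_n$ lifts to two $2m$-cycles of $K_{2n}$ with alternating $I$-edges, and together these use every edge of $K_{2,2}$ on each traversed pair exactly once when a cycle is used twice with the two ``parallel/crossed'' liftings. So it suffices to find a decomposition of $2K_n$ (every edge twice) into $2n-2$ factors, each consisting of an $m_1$-cycle, an $m_2$-cycle and $s$ isolated vertices. Equivalently, we want $n-1$ such factors decomposing $K_n$ whenever we can pair the liftings consistently. The count works: $\binom n2\cdot 4/(2m)\cdot\ldots$ gives $2n-2$ nights, since each night uses $2m$ non-$I$ edges and $4\binom n2=2n(n-1)$ non-$I$ edges exist in total $\bigl(2\binom{2n}{2}-\ldots\bigr)$. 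Because isolated vertices are allowed, the key object is really a decomposition of $2K_n$ into $(n-1)$ copies of pairs of $(C_{m_1}\cup C_{m_2})$ with flexible placement.

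For case (i), $n=2mk+1$, I would use a cyclic (rotational) construction on $\ZZ_{n}$. Since $n\equiv1\pmod{2m}$, $K_n$ has $(n-1)/2=mk$ difference classes, which can be partitioned into $k$ sets of $m$ differences. For each set, I would build a base graph $C_{m_1}\cup C_{m_2}$ on $\ZZ_n$ using each difference in the set exactly once, by a Skolem/graceful-type labelling of a two-cycle graph; known results on cyclic $2$-factorization-type labellings, e.g.\ $\rho$-labellings of $C_{m_1}\cup C_{m_2}$, give this. Developing such a base graph mod $n$ yields $n$ copies, which decompose $K_n$ into $kn$ copies of $C_{m_1}\cup C_{m_2}$. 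Taking each copy twice and lifting, we get $2kn=(n-1)n/m$ subgraphs, but these are \emph{not} yet grouped into $2n-2$ factors. The grouping step is the main obstacle. I would instead use a $1$-rotational or a ``resolvable'' variant: place the base graph so that its $m$ vertices translated by a subgroup of order $k$ are disjoint. Alternatively, I would use the parity lifting, where the two liftings of an $m$-cycle give a single $2m$-cycle when $m$ is odd. This is exactly why case (ii) requires $m$ odd.

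For case (ii), $n=m(2k+1)$ with $m$ odd, I would take vertex set $\ZZ_{2k+1}\times\ZZ_m$, or simply use $K_n$ with $n-m$ fixed isolated blocks. I would first decompose $K_n$ into $(n-1)/2$ Hamiltonian-like factors, each a union of $2k+1$ vertex-disjoint $m$-cycles (a solution to OP$(n;m)$, known to exist for $m$ odd). In each such $2$-factor, I would keep one $m$-cycle, split it, and redistribute its edges, converting it into $C_{m_1}\cup C_{m_2}$ via a $2$-switch with edges from the neighbouring cycle. The remaining cycles become couple-tables of size $2$ on their own nights. The delicate point, and the part I expect to take most of the effort, is verifying that the lifted multigraph decomposes exactly into $2n-2$ factors. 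That is, every $I$-edge must be used $2n-3$ times beyond $K_{2n}$, which forces each couple to be at a table every night, and that table is automatically of size $2$ when the couple is isolated. The edge counts match, so the construction reduces to the labelling and switching step, with the $m_i=2$ double-edge subcase handled separately.
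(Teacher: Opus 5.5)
Your proposal has a genuine gap, and it sits exactly where the paper's new work lies: the case $m_1=2$ (a table of size $4$). You defer it to ``the standard lifting lemma'' and say it is ``handled separately.'' A $C_2$ in the quotient is a doubled edge, so it is not a subgraph of $K_n$. No $\rho$-labelling or cycle decomposition of $K_n$ can produce it, and developing a cyclic starter that contains a doubled edge would cover that difference class twice.

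The missing idea is Lemma~\ref{lem:Newtool-00}. The four copies of a single edge $uv$ in $4K_n^{\bullet}$ split into two HOP $2$-cycles: a blue--pink one and a pair of opposite black arcs. Suppose $K_n$ decomposes into copies of $C_m\mathbin{\dot{\cup}}(K_2\cup K_2)$, with the two extra edges distinct and vertex-disjoint from the $m$-cycle. Then each of the four liftings of the $m$-cycle can be paired with one of the four resulting $2$-cycles, giving an HOP $(C_2,C_m)$-decomposition. You then still have to construct explicit starters, each an $m$-cycle plus two extra edges, jointly using each difference exactly once:
\begin{itemize}
\item for $n=2(m+2)k+1$, this is Lemma~\ref{lem:Newtool-1b};
\item for $n=(2k+1)(m+2)$ with $m$ odd, the starters are built on the circulant $K_{(2k+1)[m+2]}$, and the $2k+1$ copies of $K_{m+2}$ are handled by the known HOP $(C_2,C_m)$-factorization of $4K_{m+2}^{\bullet}$ (Lemmas~\ref{lem:Newtool-T2} and~\ref{lems-1starter});
\item for $m_1=m_2=2$, a $C_4$-decomposition of $K_{8k+1}$ is used, with each $4$-cycle split into two matchings (Corollary~\ref{lem:Newtool-01}).
\end{itemize}
None of this appears in your plan.

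Several other steps are also wrong. The number of nights is $\gamma=2n(n-1)/m$, not $2n-2$, because each night uses $m$ non-$I$-edges, not $2m$. Every couple not at a round table is forced to a table of size $2$, which carries only $I$-edges. So any $(C_{m_1},C_{m_2})$-subgraph of the quotient, spanning or not, lifts to a complete night, and an HOP \emph{decomposition} of $4K_n^{\bullet}$ suffices (Theorem~\ref{theo:Gtool1}). The ``grouping'' obstacle you call the main difficulty therefore does not exist, and no resolvable or $1$-rotational variant is needed.

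Your lifting mechanism also fails for odd $m$. The all-``parallel'' lifting of an odd cycle is not $I$-alternating, and a decomposition of $2K_n$ is not automatically liftable: Lemma~\ref{lem:Gtool2} needs an even number of pink edges in every cycle. What works is to take four liftings of each cycle in a decomposition of $K_n$ (Lemma~\ref{lem:Gtool4}). With that fix, case (i) for $m_1\ge 3$ follows from a known $(C_{m_1},C_{m_2})$-decomposition of $K_n$, as you suggest.

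Your case (ii) plan cannot work. After ``splitting'' one $m$-cycle per factor of an OP$(n;m)$ solution, the edges of the remaining cycles must still be covered by round tables, since size-$2$ tables cover no non-$I$-edge. Nothing guarantees that the $2$-switches preserve a decomposition. What is needed instead is the known $(C_{m_1},C_{m_2})$-decomposition of $K_n$ for $m$ odd and $n\equiv m \pmod{2m}$ (Theorem~\ref{theo:(Cm1,Cm2)-dec}(ii)). That result fails for $(m_1,m_2,n)=(4,5,9)$, so this case needs the direct HOP $(C_4,C_5)$-factorization of $4K_9^{\bullet}$. Finally, $m$ odd is required so that $n=(2k+1)m$ is odd, making $K_n$ cycle-decomposable; it is not due to any parity in the lifting.
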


\begin{restatable}{theo}{mainresultTwo}{\label{theo:HOP(Cm1,...,Cmt)-dec}}
Let $s\geq 0$, and $2\leq m_1\leq\ldots\leq m_t$ be integers. Assume $m=m_1+\ldots+m_t\leq 10$, and that $n=s+m$ is an odd integer such that $n(n-1)\equiv 0\ ({\rm mod}\ 2m)$. Then HOP$(2^{\langle s \rangle},2m_1, \ldots, 2m_t)$ has a solution. 
\end{restatable}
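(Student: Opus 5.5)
The plan is to reduce {\rm HOP}$(2^{\langle s \rangle},2m_1,\ldots,2m_t)$ to a decomposition problem on the complete graph $K_n$ whose vertices are the couples. Contract each couple to a single vertex. A table of size $2$ then becomes an isolated vertex, and a round table of size $2m_i$ becomes an $m_i$-cycle when $m_i\ge 3$, or a double edge when $m_i=2$. Conversely, each edge $\{u,v\}$ of $K_n$ must be ``lifted'' twice, namely as the two cross edges $u_1v_1,u_2v_2$ or as $u_1v_2,u_2v_1$. Over the $2n-2$ nights every pair of distinct participants must be adjacent exactly once, which is $4$ adjacencies per pair of couples. Each round table containing the edge $uv$ of the contracted cycle uses exactly $2$ of these adjacencies, in one of the two orientations.

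So the target becomes the following. Decompose $2K_n$ (equivalently, take the $(n-1)$ nights of $K_n$ twice, with the two lifts chosen complementarily) into $2n-2$ factors. Each factor consists of $s$ isolated vertices together with vertex-disjoint cycles of lengths $m_1,\ldots,m_t$ (with $2$-cycles allowed for $m_i=2$). Moreover, we need a consistent choice of ``twist'' for each cycle so that the two uses of every edge receive opposite lifts. Lepine--\v{S}ajna's switching argument for HOP shows that a closed lifted $m$-cycle exists for any choice of twists, provided the number of twisted edges has the right parity. This parity can be arranged by pairing each cycle of one copy with its image in the other copy.

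The key step is therefore to build, for each admissible $n$, a decomposition of $K_n$ into $\frac{n-1}{2}\cdot\frac{n}{m}\cdot\frac{1}{1}$-many structures. Concretely, I would aim for a decomposition of $K_n$ into $n(n-1)/(2m)$ copies of the graph $H=C_{m_1}\cup\cdots\cup C_{m_t}$, where $C_2$ means a single edge to be doubled, and then resolve these copies into classes. Since $n$ is odd and $n(n-1)\equiv 0\pmod{2m}$, the count $n(n-1)/(2m)$ is an integer. For $2n-2$ nights we need each night to contain exactly one copy of $H$ (plus $s$ tables of size $2$), so the copies must number $2n-2$ counted twice. This forces $n(n-1)/(2m)\cdot 2 = 2n-2$ only when $n=m$.

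This count shows that multiple copies of $H$ must share a night only when $s$ is large relative to $m$. Hence the right formulation is a decomposition of $2K_n$ into $2n-2$ ``partial factors'', each a union of $\frac{n-s}{m}$ \ldots{} Rather than settling the counting abstractly, I would proceed by direct construction. Take $\mathbb{Z}_{n-1}\cup\{\infty\}$ or $\mathbb{Z}_n$, and find base graphs that are copies of $H$ with difference sets covering each nonzero difference in $\mathbb{Z}_n$ the required number of times. Then develop these base graphs cyclically and distribute the developed copies over nights so that the vertices not covered form tables of size $2$.

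Because $m\le 10$, there are finitely many multisets $\{m_1,\ldots,m_t\}$ and finitely many residue classes of $n$ modulo $2m$. I would handle each with explicit base blocks, typically computer-found, and organize the general $n$ via a recursive step. From a solution for $n$ and a small ingredient, I would use a standard $\mathbb{Z}_{2m}$-or Kirkman-type filling to get $n+2m$, reducing everything to a finite list of small base cases.

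The main obstacle I expect is the lifting and compatibility step when $m_i=2$ tables occur together with many size-$2$ tables. In that situation the same edge of $K_n$ must appear in two different nights with opposite lifts, while each night's partial factor still spans all $n$ couples. I would first try to enforce this by making the two copies of each night related by a fixed involution on twist labels, as in the HOP$(2n;4)$ constructions of Lepine--\v{S}ajna, and fall back on ad hoc base blocks for the small exceptional $n$.
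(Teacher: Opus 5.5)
Your reduction to $K_n$ is set up incorrectly, and the rest of the plan inherits the error. At a round table $\ldots u_au_bv_cv_d\ldots$ two consecutive couples are adjacent through exactly \emph{one} cross edge, $u_bv_c$. A table of size $4$, $u_au_bv_cv_d$, contracts to a $2$-cycle, again with one cross edge per contracted edge. Each of the four cross edges between two couples is used exactly once, so the contracted object is $4K_n$ with its four parallel edges distinguished. That is the paper's $4K_n^\bullet$ with condition (C1), not $2K_n$ with two adjacencies per edge.

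The number of nights is also wrong. It is $\gamma=2n(n-1)/m$, not $2n-2$: a person at a size-$2$ table meets nobody new that night, so $\gamma>2n-2$ whenever $s>0$. Each night carries exactly one $(C_{m_1},\ldots,C_{m_t})$-subgraph, and the $2s$ people it misses automatically form the size-$2$ tables (Theorem~\ref{theo:Gtool1}). So the real target is a decomposition of $4K_n^\bullet$ into non-spanning HOP subgraphs. Nothing needs to be resolved into classes or distributed over nights, and the difficulty you single out (size-$2$ tables constraining the $C_2$'s) does not arise.

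Your count breaks down precisely because of these two errors. In the correct model each copy of $H$ in a decomposition of $K_n$ yields four nights, and $4\cdot n(n-1)/(2m)=\gamma$ holds for every $n$. Your convention that a $C_2$ is ``a single edge to be doubled'' is also inconsistent with that count. One edge of $K_n$ yields only two HOP $2$-cycles (pink--blue and black--black). So each $C_2$ slot needs two edges of $K_n$, split into two matchings disjoint from the cycles; this is Lemma~\ref{lem:Newtool-00}. A legitimate $2K_n$ route does exist (Lemma~\ref{lem:Gtool2}). However, it asks for $n(n-1)/m$ subgraphs of $2K_n^\circ$ whose cycles of length at least $3$ carry an even number of pink edges, not for $2n-2$ factors.

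Second, even with the correct target, the proposal contains no construction. Computer-found base blocks cover only finitely many $n$. The passage from $n$ to $n+2m$ by an unspecified ``$\mathbb{Z}_{2m}$- or Kirkman-type filling'' is exactly the hard part: it would need HOP decompositions of $4G^\bullet$ for the leftover bipartite or equipartite graph, which you never supply. The ideas that make the paper's table-driven proof work are these:
\begin{itemize}
\item[(a)] By Lemma~\ref{lem:Gtool4}, any $(C_{m_1},\ldots,C_{m_t})$-decomposition of $K_n$ gives an HOP one of $4K_n^\bullet$. So when all $m_i\ge 3$ one simply quotes the known cycle-decomposition theorems for $K_n$, treating $(3,3)$ and $(4,5)$ at $n=9$ separately.
\item[(b)] An even cycle is the union of two perfect matchings. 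Hence a $(C_{2r},\ldots)$-decomposition of $K_n$ converts into an HOP $(C_2^{\langle r\rangle},\ldots)$-decomposition (Corollary~\ref{lem:Newtool-01}). This settles the types with two or more $2$'s, apart from $(2,2,5)$ at $n=9$, which is taken from earlier work.
\item[(c)] The types with a single $2$ are $(2,m')$, $(2,3,3)$, $(2,3,4)$, $(2,4,4)$ and $(2,3,5)$, besides the trivial $(2)$. They are handled by explicit cyclic starters over $\mathbb{Z}_n$ or $\mathbb{Z}_{n-1}\cup\{\infty\}$ for each residue class, using $K_{(2k+1)r}=(2k+1)K_r\oplus K_{(2k+1)[r]}$ when $n\equiv r\pmod{2r}$.
\end{itemize}
Without these, or some equivalent general construction, your outline does not prove the theorem for the infinitely many admissible $n$.
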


This paper is structured as follows. In Section~\ref{sec:2}, we begin by defining the necessary terminology. Then, in Section~\ref{sec:3}, we adapt the approaches used for the HOP in~\cite{LDMSaj} to its generalized form, and present the previous results needed to prove our main theorems. After discussing the techniques and tools needed to address the generalized HOP in Section~\ref{sec:4}, we conclude in Sections~\ref{sec:5}  and \ref{sec:6} with the proofs of our two main results.


\section{Terminology}{\label{sec:2}}

Graphs in this paper will be loopless, but may contain parallel edges or directed edges. For any simple graph \( G \), we use \( \lambda G \) to denote the multigraph obtained by replacing each edge of \( G \) with \( \lambda \) parallel copies. The disjoint union of graphs $G_1$ and $G_2$ is denoted by $G_1 \,\dot{\cup}\, G_2$. 
 As usual, \( K_n \) denotes the complete graph on \( n \) vertices, so \( \lambda K_n \) denotes the \( \lambda \)-fold complete graph of order $n$. We use the symbol \( K_{m[k]} \) to denote the complete equipartite graph with \( m \) parts of size \( k \). 

A collection of subgraphs $\{H_1, H_2, \ldots, H_t\}$ of a graph $G$ is called a \textit{decomposition} of $G$ if the set of their edge sets $\{E(H_1), E(H_2), \ldots, E(H_t)\}$ forms a partition of $E(G)$; if this is the case, we write $G = H_1 \oplus H_2 \oplus \dots \oplus H_t$. Furthermore, if all subgraphs $H_1, H_2, \ldots, H_t$ are isomorphic to a graph $H$, the collection is referred to as an \textit{$H$-decomposition} of $G$.

A graph is called \textit{$r$-regular} if all of its vertices have degree $r$. A  {\em $(C_{m_1}, C_{m_2},\ldots,C_{m_t})$-subgraph} of a graph $G$ is a $2$-regular subgraph of $G$ consisting of $t$ disjoint cycles of lengths $m_1,m_2, \ldots,m_t$;
if this subgraph is spanning, it is called a \textit{$(C_{m_1}, \dots, C_{m_t})$-factor}. A  {\em $(C_{m_1}, C_{m_2},\allowbreak \ldots,C_{m_t})$-decomposition} of a graph $G$ is a decomposition of $G$ into $(C_{m_1}, C_{m_2},\ldots,C_{m_t})$-subgraphs; if these subgraphs are factors, the decomposition is called a \textit{$(C_{m_1}, \dots, C_{m_t})$-factorization}. In the case of uniform cycle lengths, that is,  $m_1 = \dots = m_t = m$, we use the terms \textit{$(C_m^{\langle t \rangle})$-subgraph}, \textit{$C_m$-factor}, \textit{$(C_m^{\langle t \rangle})$-decomposition}, and \textit{$C_m$-factorization}, respectively. Note that the symbol $C_m^{\langle t \rangle}$ represents the $t$-tuple $(C_{m}, C_{m},\ldots,C_{m})$. A {\em $(K_2^{\langle s \rangle},C_{m_1}, \ldots,C_{m_t})$-factor} of a graph $G$ is a spanning subgraph of $G$ that is a disjoint union of $s$ copies of $K_2$ and a $(C_{m_1},\ldots,C_{m_t})$-subgraph of $G$.  A {\em $(K_2^{\langle s \rangle},C_{m_1}, \ldots,C_{m_t})$-factorization} of $G$ is a decomposition of $G$ into $(K_2^{\langle s \rangle},C_{m_1}, \ldots, C_{m_t})$-factors.

Let $I$ be a 1-factor in $K_{2n}$. An edge of $K_{2n}$ which belongs to $E(I)$ is called an {\em $I$-edge}; all other edges are {\em non‐I‐edges}. A graph $K_{2n}$  with  all  $I$-edges deleted is denoted by $K_{2n}- I$, and a graph $K_{2n}$  with $\lambda$ additional copies of each   $I$-edge is denoted by $K_{2n}+\lambda I$. Note that  additional copies of $I$-edges are also considered  $I$-edges in the graph $K_{2n}+\lambda I$. 
A cycle $C$ in $K_{2n} + \lambda I$ is called an {\em $I$-alternating} cycle if the $I$-edges and non-$I$-edges alternate along $C$. 
Let $F$ be a  $(C_{m_1}, C_{m_2},\ldots,C_{m_t})$-subgraph of  $K_{2n} + \lambda I$. If every cycle in  $F$ is $I$-alternating, then $F$ is said to be {\em $I$-alternating}.
%
A $(K_2^{\langle s \rangle},C_{m_1}, \ldots,\allowbreak C_{m_t})$-factor of $K_{2n} + \lambda I$  is called  {\em $I$-alternating} if  its $(C_{m_1},\ldots, C_{m_t})$-subgraph is $I$‐alternating, and all other edges are $I$-edges. Moreover, a $(K_2^{\langle s \rangle},C_{m_1}, \ldots,C_{m_t})$-factorization is {\em $I$-alternating} if all of its  $(K_2^{\langle s \rangle},C_{m_1}, \ldots,C_{m_t})$-factors are $I$-alternating.

Let $G_1$ and $G_2$ be two vertex-disjoint simple graphs. The \emph{join} of $G_1$ and $G_2$, denoted by $G_1 \bowtie G_2$, is a simple graph consisting of the union of graphs $G_1$ and $G_2$ along with all edges with one end in $G_1$ and the other in $G_2$.

The \emph{circulant graph} $\mathrm{Circ}(n; S)$, where $S \subseteq \mathbb{Z}_n^\ast$ and $S = -S$,  is a graph with vertex set $\{x_i : i \in \mathbb{Z}_n\}$ and edge set $\{x_i x_{i+d} : i \in \mathbb{Z}_n,\ d \in S\}$. An edge of the form $x_i x_{i+d}$ is said to have \emph{difference} $d$. Since each edge of difference $d$ is also of difference $n - d$, we may assume that all differences lie in the set $\{1, 2, \ldots, \lfloor \tfrac{n}{2} \rfloor\}$. When \( n \) is even, the edge \( x_i x_{i + \tfrac{n}{2}} \) connects a pair of vertices that are ``diametrically opposite''; we call $d=\tfrac{n}{2}$ a \emph{diameter difference}.
In many of the constructions given in this paper, the complete graph $K_{n}$ is viewed as $\mathrm{Circ}(n-1; \pm S) \bowtie K_1$, where $S = \{1, 2, \ldots, \lfloor\tfrac{n-1}{2} \rfloor\}$ and the vertex of $K_1$ is denoted by $x_{\infty}$. Note that an edge of the form $x_ix_{\infty}$ is said to be of {\em difference infinity}.

Let $G = \mathrm{Circ}(n; S)$ be a circulant graph with vertex set $\{x_0, x_1, \ldots, x_{n-1}\}$, where the vertices are cyclically ordered according to increasing subscripts. We define interval notation on this cyclic ordering as follows: the interval $[x_i, x_j] = \{x_i, x_{i+1}, \ldots, x_j\}$, $(x_i, x_j] = \{x_{i+1}, x_{i+2}, \ldots, x_j\}$, $[x_i, x_j) = \{x_i, x_{i+1}, \ldots, x_{j-1}\}$, and $(x_i, x_j) = \{x_{i+1}, x_{i+2}, \ldots, x_{j-1}\}$. Note that all subscripts are evaluated modulo $n$.


\section{The Main Approach and Previous Results}{\label{sec:3}}

We would like to model the generalized HOP using an appropriate graph derived from $K_{2n}$. Let $I$ denote the $1$-factor of $K_{2n}$ corresponding to the $n$ couples. Then, a solution to ${\rm HOP}(2^{\langle s \rangle}, 2m_1, \allowbreak \ldots, 2m_t)$ is equivalent to an $I$-alternating $(K_2^{\langle s \rangle}, C_{2m_1}, \ldots, C_{2m_t})$-factorization of $K_{2n} + (\gamma - 1)I$ for an appropriate $\gamma$. Note that  the number of non-$I$-edges in each $(K_2^{\langle s \rangle}, C_{2m_1}, \ldots, C_{2m_t})$-factor, that is $\frac{1}{2} \sum_{i=1}^t 2m_i$, must divide the total number of non-$I$-edges in $K_{2n} + (\gamma - 1)I$, which is $\frac{2n(2n - 2)}{2}$.  Thus, the obvious necessary condition for ${\rm HOP}(2^{\langle s \rangle}, 2m_1, \ldots, 2m_t)$ to have a solution is $\left(\sum_{i=1}^{t} m_i\right) \mid 2n(n - 1)$. In this case, $\frac{2n(n - 1)}{\sum_{i=1}^{t} m_i}$ must be an integer representing the exact number of $(K_2^{\langle s \rangle}, C_{2m_1}, \ldots, C_{2m_t})$-factors in the required decomposition; that is to say, $\gamma = \frac{2n(n - 1)}{\sum_{i=1}^{t} m_i}$.

Having established the necessary conditions, we now move to the primary tools for our construction.

\begin{defn}{\rm{\cite{LDMSaj}}} {\rm 
Let $G$ be a simple graph. An {\em HOP‐coloring‐orientation} of $4G$ is a 3‐edge‐coloring of $4G$ with colors blue, pink, and black, together with an orientation of the black edges such that for any two adjacent vertices in $4G$, among the four parallel  edges between them, one edge is blue, one edge is pink, and two of the edges are black  with the opposite orientations. The multigraph $4G$ with a given HOP‐coloring‐orientation is denoted by $4G^{\bullet}.$
}
\end{defn}

\begin{defn}{\rm{\cite{LDMSaj}}} {\label{def}}{\rm
Let $G$ be a simple graph and  $\D$  a decomposition of $4G^{\bullet}$ into $2$-regular subgraphs. We say that $\D$ is {\em HOP} if it satisfies the following condition.
\begin{description}
 \item [(C1)] For every cycle $C$ in $\D$, any two adjacent edges  of $C$ satisfy one of the following: 
\begin{itemize}
\item one is blue and the other pink; 
\item one is blue and the other black with an orientation toward the blue edge; 
\item one is pink and the other black with an orientation away from the pink edge; 
\item both are black and oriented in the same way. 
\end{itemize}
\end{description}
}
\end{defn}

The following theorem which allow us to convert the problem from the multigraph $K_{2n} + (\gamma - 1)I$ to the multigraph $4K_n^{\bullet}$  is a generalization of  \cite[Theorem 4.3]{LDMSaj}.

\begin{theo}{\label{theo:Gtool1}}
Let $s\geq 0$ and $2\leq m_1\leq \ldots\leq m_t$ be integers. Let $n=s+m_1+m_2+\ldots +m_t$. Then HOP$(2^{\langle s \rangle},2m_1, 2m_2,\ldots, 2m_t)$ has a solution if and only if $4K_n^{\bullet}$ admits an HOP $(C_{m_1}, C_{m_2},\ldots,C_{m_t})$-decomposition.
\end{theo}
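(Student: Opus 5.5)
We prove both directions with one explicit correspondence. Quotient $K_{2n}$ by the 1-factor $I$: label the couples $v_1,\dots,v_n$ and their members $v_i^0,v_i^1$, so $I=\{v_i^0v_i^1\}$. Between couples $v_i$ and $v_j$ ($i\neq j$) there are exactly four non-$I$-edges: $v_i^0v_j^0$, $v_i^1v_j^1$, $v_i^0v_j^1$, $v_i^1v_j^0$. We make these correspond to the four parallel edges of $4K_n$ between $v_i$ and $v_j$. Orient $K_n$ arbitrarily, say from smaller to larger index, and assign colours as follows:
\begin{itemize}
\item $v_i^0v_j^0$ is blue;
\item $v_i^1v_j^1$ is pink;
\item $v_i^0v_j^1$ is black, oriented $v_i\to v_j$;
\item $v_i^1v_j^0$ is black, oriented $v_j\to v_i$.
\end{itemize}
Read this way, a black arc $u\to w$ means ``leave $u$ at its $0$-end and enter $w$ at its $1$-end''. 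A blue edge is a $0$--$0$ edge and a pink edge a $1$--$1$ edge. This bijection between non-$I$-edges of $K_{2n}$ and edges of $4K_n^\bullet$ is an HOP-coloring-orientation. It is the same dictionary used for \cite[Theorem 4.3]{LDMSaj}, and we simply reuse it.

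\textbf{Key step.} Take an $I$-alternating cycle $\tilde C=v_{i_1}^{a_1}v_{i_1}^{1-a_1}v_{i_2}^{a_2}v_{i_2}^{1-a_2}\cdots$ of length $2k\ge 4$ in $K_{2n}+(\gamma-1)I$. Contracting each $I$-edge gives a closed walk $C=v_{i_1}v_{i_2}\cdots v_{i_k}$ in $4K_n^\bullet$. Its vertices are distinct because $\tilde C$ is a cycle, so $C$ is a $k$-cycle when $k\ge 3$, and the case $k=2$ cannot occur since $m_i\ge 2$.

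We check condition (C1) at each vertex of $C$. The two non-$I$-edges of $\tilde C$ at couple $v$ meet $v$ at opposite ends, one at $v^0$ and one at $v^1$. At $v^0$ the incident edge is either blue or a black arc oriented away from $v$. At $v^1$ it is either pink or a black arc oriented toward $v$. Running through the four combinations gives exactly the four bullet cases of (C1):
\begin{itemize}
\item blue with pink;
\item blue with black oriented toward the blue edge, i.e.\ into $v$;
\item pink with black oriented away from the pink edge, i.e.\ out of $v$;
\item two black arcs oriented consistently.
\end{itemize}
Conversely, any cycle of $4K_n^\bullet$ satisfying (C1) at every vertex determines at each vertex which end is used by each incident edge. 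Since consecutive edges use opposite ends, inserting the $I$-edge at each vertex lifts the cycle uniquely to an $I$-alternating cycle of length $2k$ in $K_{2n}-I$ plus $I$-edges.

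\textbf{Forward direction.} Given a solution, i.e.\ an $I$-alternating $(K_2^{\langle s\rangle},C_{2m_1},\dots,C_{2m_t})$-factorization, each factor covers every couple. The $s$ couples seated at the $K_2$ tables use only $I$-edges. Each remaining couple lies on exactly one alternating cycle, and the $K_2$ components carry no non-$I$-edges. Projecting each factor therefore gives a $(C_{m_1},\dots,C_{m_t})$-subgraph of $4K_n^\bullet$ satisfying (C1). Every non-$I$-edge of $K_{2n}$ is used exactly once across the factorization, so these subgraphs partition $E(4K_n^\bullet)$, giving an HOP decomposition.

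\textbf{Reverse direction.} Given an HOP $(C_{m_1},\dots,C_{m_t})$-decomposition, lift each subgraph as above. For each lifted subgraph, add the $I$-edges of the $s$ couples not covered by its cycles as the $K_2$ components; this yields an $I$-alternating $(K_2^{\langle s\rangle},C_{2m_1},\dots,C_{2m_t})$-factor. The lifts use every non-$I$-edge exactly once. The number of factors is $|E(4K_n)|/m=2n(n-1)/m=\gamma$, and each factor uses exactly $n$ $I$-edges. So the factors use each $I$-edge $\gamma$ times, which is exactly the multiplicity in $K_{2n}+(\gamma-1)I$.

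\textbf{Main obstacle.} The only delicate part is the bookkeeping in the local (C1) check and in the lift: showing that the end-assignment is consistent around the whole cycle and that distinct vertices of $C$ yield a genuine cycle. Everything else is counting. The one new feature relative to \cite[Theorem 4.3]{LDMSaj} is the $K_2$ tables, and they are harmless because they contain only $I$-edges.
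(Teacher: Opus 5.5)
Your proof is correct and is essentially the paper's. You use the same dictionary $\phi$: blue $=$ $0$--$0$ edge, pink $=$ $1$--$1$ edge, and black arc $(x_i,x_j)$ $=$ $x_i^0x_j^1$. You contract the $I$-edges in one direction, lift and add the $s$ uncovered couples as $K_2$ components in the other, and count $\gamma=2n(n-1)/m$. Your vertex-local ``which end'' argument simply makes explicit the (C1) check that the paper defers to \cite[Theorem 4.3]{LDMSaj}.

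One correction: the case $k=2$ does occur, whenever some $m_i=2$ (a table of size $4$). There $C$ is a $2$-cycle formed by two distinct parallel edges of $4K_n^{\bullet}$. Your local check at each of its two vertices and your lift apply verbatim, and they correctly admit only the blue--pink and the directed black $2$-cycles. So nothing else in your argument needs to change.
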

\begin{proof} 
The proof is very similar to the proof of \cite[Theorem 4.3]{LDMSaj}. Here, we show only the part of the proof that needs to be adjusted.

First, assume HOP$(2^{\langle s \rangle}, 2m_1, 2m_2, \ldots, 2m_t)$ has a solution. Then $K_{2n} + (\gamma - 1)I$ admits an $I$-alternating $(K_2^{\langle s \rangle}, C_{2m_1}, \ldots, C_{2m_t})$-factorization for $\gamma = \frac{2n(n - 1)}{\sum_{i=1}^{t} m_i}$; call this factorization \(\F\). 
Let $V(K_{2n} + (\gamma - 1)I) = \{x_i^\alpha : i \in \ZZ_n, \alpha \in \ZZ_2\}$, where $x_i^0x_i^1 \in E(I)$ for all $i \in \ZZ_n$. Let  $4K_n^{\bullet}$ be $V(4K_n^{\bullet}) = \{x_i : i \in \ZZ_n\}$.
Assign to each non-$I$-edge of $K_{2n}+(\gamma-1)I$ an edge of $4K_n^{\bullet}$ as follows. For $i,j\in \ZZ_n$ with $i\neq j$, assign the blue copy of $x_ix_j$ to the edge  $x_i^0 x_j^0$;  the pink  copy of $x_ix_j$ to the edge  $x_i^1 x_j^1$;  the black arc $(x_i, x_j)$  to the edge  $x_i^0 x_j^1$; and the black arc $(x_j, x_i)$  to the edge  $x_i^1 x_j^0$.
This mapping, call it $\phi$, is a bijection from the set of non-$I$-edges of $K_{2n} + (\gamma - 1)I$ to the edge set of $4K_n^{\bullet}$.
Let \(C\) be a $2m$-cycle in \(\F\). By contracting the $I$-edges in the cycle \(C\) and applying the bijection \(\phi\), we obtain a cycle \(C'\) in $4K_n^{\bullet}$
of length \(m\). 
Thus, $\phi$ can be viewed as mapping each $I$-alternating $(K_2^{\langle s \rangle}, C_{2m_1}, \ldots, C_{2m_t})$-factor in $\F$ to a $(C_{m_1}, \ldots, C_{m_t})$-subgraph in $4K_n^{\bullet}$; hence, it maps the $I$-alternating $(K_2^{\langle s \rangle}, \allowbreak C_{2m_1}, \ldots, C_{2m_t})$-factorization $\F$ to a $(C_{m_1}, \ldots, C_{m_t})$-decomposition $\D$ of $4K_n^{\bullet}$. Notice that the copies of $K_2$ in the $(K_2^{\langle s \rangle}, C_{2m_1}, \ldots, C_{2m_t})$-factors ``disappear'' under this mapping; in other words, they are irrelevant. 
Analogous to the proof of \cite[Theorem 4.3]{LDMSaj}, it can be shown that any two adjacent edges in $C'$ satisfy Condition {\bf (C1)} of Definition \ref{def}. Therefore, $\mathcal{D}$ is an HOP $(C_{m_1}, C_{m_2}, \ldots, C_{m_t})$-decomposition of $4K_n^{\bullet}$.

Conversely, assume $4K_n^{\bullet}$ admits an HOP $(C_{m_1}, C_{m_2},\ldots,C_{m_t})$-decomposition  $\D$. Thus, the number of edges of each $(C_{m_1}, C_{m_2},\ldots,C_{m_t})$-subgraph in $\D$ must divide the number of edges of $4K_n^{\bullet}$, that is,   $\big(\sum_{i=1}^{t} m_i\big)|2n(n-1)$, and $\gamma=\frac{2n(n -1)}{\sum_{i=1}^{t} m_i}$ is an integer. Similarly to the proof of \cite[Theorem 4.3]{LDMSaj}, we can show that a $(C_{m_1}, C_{m_2},\ldots,C_{m_t})$-subgraph of $4K_n^{\bullet}$ ``lifts'' to an $I$‐alternating $(C_{2m_1}, C_{2m_2},\ldots,C_{2m_t})$‐subgraph \(F\) of $K_{2n}+(\gamma-1)I$. We extend this subgraph  to an $I$-alternating $(K_2^{\langle s \rangle},C_{2m_1}, \ldots,C_{2m_t})$-factor \(F'\) of $K_{2n}+(\gamma-1)I$ as follows.
 Since $2n= 2s+2m_1+ 2m_2+\ldots+ 2m_t$, we know there are $2s$ vertices in the vertex set of $K_{2n}+(\gamma-1)I$ that are not in \(F\). Moreover, since the $t$ cycles are $I$-alternating, we can find $s$ pairs among the remaining $2s$ vertices such that each pair forms an $I$-edge.  Thus, any $I$‐alternating $(C_{2m_1}, C_{2m_2},\ldots,C_{2m_t})$‐subgraph extends uniquely to an $I$-alternating $(K_2^{\langle s \rangle},C_{2m_1}, \ldots,C_{2m_t})$-factor. 
 Therefore,  any $(C_{m_1}, C_{m_2},\ldots,C_{m_t})$-subgraph in $\D$ ``lifts'' to an $I$-alternating $(K_2^{\langle s \rangle},C_{2m_1}, \ldots,C_{2m_t})$-factor in $K_{2n}+(\gamma-1)I$.
 Since $|\D|=\gamma$, we have $\gamma$ $I$-alternating $(K_2^{\langle s \rangle},C_{2m_1}, \ldots,C_{2m_t})$-factors which form an $I$-alternating $(K_2^{\langle s \rangle},C_{2m_1}, \ldots,C_{2m_t})$-factorization of $K_{2n}+(\gamma-1)I$; this is equivalent to a solution for HOP$(2^{\langle s \rangle},2m_1, 2m_2,\ldots, 2m_t)$. 
\end{proof}

Thus, by Theorem~\ref{theo:Gtool1},  {\rm HOP}$(2^{\langle s \rangle}, 2m_1, 2m_2, \ldots, 2m_t)$ admits a solution if and only if $4K_n^{\bullet}$ admits an HOP $(C_{m_1}, C_{m_2}, \ldots, C_{m_t})$-decomposition. We henceforth focus on finding HOP $(C_{m_1}, C_{m_2}, \ldots, C_{m_t})$-decompositions of $4K_n^{\bullet}$.  Below, we introduce tools and previous results that allow us to use decompositions of simpler graphs, such as $K_n$ and $2K_n$, and extend them to HOP decompositions of $4K_n^{\bullet}$.

In the next lemma, which is a generalization of \cite[Lemma 4.5]{LDMSaj}, the symbol $2G^{\circ}$ represents the multigraph $2G$ with a 2‐edge‐coloring with colors pink and black such that for any two vertices in $2G$, the two parallel edges between them have colors  pink and black.

\begin{lem}{\rm{\cite{LDMSaj}}}{\label{lem:Gtool2}}
Assume that $2G^{\circ}$ admits a $(C_{m_1}, C_{m_2},\ldots,C_{m_t})$-decomposition $\F$ with the property that every $m$-cycle of $\F$, for $m\geq 3$, contains an even number of pink edges. Then $4G^{\bullet}$ admits an HOP $(C_{m_1}, C_{m_2},\ldots,C_{m_t})$-decomposition.
\end{lem}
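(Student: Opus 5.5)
Given a decomposition $\F$ of $2G^{\circ}$ in which every cycle of length at least $3$ has an even number of pink edges, I will build an HOP decomposition of $4G^{\bullet}$ with twice as many subgraphs. Each $(C_{m_1},\ldots,C_{m_t})$-subgraph $F\in\F$ will produce two $(C_{m_1},\ldots,C_{m_t})$-subgraphs $F'$ and $F''$ of $4G^{\bullet}$. The guiding idea is that the pink copy of an edge $xy$ in $2G^{\circ}$ will give rise to the blue and pink copies of $xy$ in $4G^{\bullet}$, and the black copy will give rise to the two opposite black arcs. Since $4G^{\bullet}$ has the HOP-coloring-orientation built into the edge multiset, I am free to choose, cycle by cycle, which copies are used where. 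The only requirement is that every copy is used exactly once overall and that (C1) holds along each cycle.

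For a cycle $C=(v_0,v_1,\ldots,v_{k-1})$ of $F$ with $k\ge 3$, I traverse $C$ in a fixed direction and handle the two types of edge as follows.
\begin{itemize}
\item \emph{Black edges.} In the first copy $C'$, a black edge $v_iv_{i+1}$ becomes the arc $(v_i,v_{i+1})$. In the second copy $C''$, it becomes the reverse arc $(v_{i+1},v_i)$, which is the same as traversing $C''$ in the opposite direction with arcs oriented forward.
\item \emph{Pink edges.} These must become blue or pink edges of $4G^{\bullet}$. Reading (C1), when traversing the cycle forward along oriented black arcs, a blue edge is where an arc arriving at a vertex continues, and a pink edge is where the traversal leaves: the arc $(v_{i-1},v_i)$ followed by the edge $v_iv_{i+1}$ requires $v_iv_{i+1}$ to be blue, and the edge $v_{i-1}v_i$ followed by the arc $(v_i,v_{i+1})$ requires $v_{i-1}v_i$ to be pink.
\item \emph{Consecutive pink edges.} Two consecutive non-black edges must be one blue and one pink. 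So along $C'$ the pink edges of $C$, read cyclically, must alternate blue, pink, blue, pink, \dots, regardless of how many black edges separate them.
\end{itemize}

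The main check is that this alternation is consistent around the cycle, and this is exactly where the hypothesis is used. A maximal run of black arcs between two consecutive pink edges $e$ and $f$ (in traversal order) forces $f$ to be blue and $e$ to be pink. Consecutive pink edges with no black edge between them are likewise forced to be one blue and one pink. Hence, going around $C$, the labels of the pink edges must switch at every step, which is possible exactly when the number of pink edges is even. If $C$ has no pink edges, it is simply a directed cycle of black arcs, which is fine.

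In $C''$ I reverse the traversal direction and swap blue and pink. This again satisfies (C1), because reversing the direction of travel interchanges the roles of "toward" and "away". It also means that each pink edge of $C$ receives the blue copy in exactly one of $C'$, $C''$ and the pink copy in the other. Likewise each black edge receives both opposite arcs. Therefore the union over all $F\in\F$ of $\{F',F''\}$ uses every edge of $4G^{\bullet}$ exactly once.

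The cycle lengths are preserved, so this is an HOP $(C_{m_1},\ldots,C_{m_t})$-decomposition. If $2$-cycles occur (an edge paired with its parallel copy), I handle them directly: a $2$-cycle $\{$pink $xy$, black $xy\}$ becomes the two $2$-cycles (blue $xy$, arc $(y,x)$) and (pink $xy$, arc $(x,y)$), and one checks these against (C1) directly. The main obstacle is the careful bookkeeping of the (C1) orientation rules, in particular confirming that the parity condition is exactly what makes the alternating labelling of pink edges closed around each cycle.
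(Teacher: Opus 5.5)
Your overall plan is the natural one: each $F\in\F$ yields $F'$ and $F''$, pink copies in $2G^{\circ}$ supply the blue and pink copies, black copies supply the two opposite arcs, and $F''$ is $F'$ with blue/pink swapped and all arcs reversed. However, two steps fail as written.

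\textbf{Orientation of black edges in $C'$.} Orienting \emph{every} black edge forward along the traversal is incompatible with (C1) in general. You observe correctly that a nonempty run of forward arcs forces the non-black edge before it to be pink and the one after it to be blue. But these are fixed labels, not just a requirement that the two labels differ.

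Write the pink edges of $C$ as $p_1,\ldots,p_{2r}$ in traversal order. Alternation makes the label of $p_j$ depend only on the parity of $j$. A nonempty run after $p_j$ forces $p_j$ to be pink, so all such $j$ must have the same parity. Equivalently, consecutive nonempty black runs must be separated by an even number of pink edges. This is strictly stronger than the hypothesis, so your conclusion ``possible exactly when the number of pink edges is even'' is false.

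The simplest failure is the $4$-cycle $v_0v_1v_2v_3$ with $v_0v_1,v_2v_3$ pink and $v_1v_2,v_3v_0$ black. This is exactly the alternately colored type of cycle used in Lemmas~\ref{lem:m=10-Samll3} and~\ref{lem:m=10-Samll4}. Your $C'$ contains the arcs $(v_1,v_2)$ and $(v_3,v_0)$. At $v_1$ the edge $v_0v_1$ must therefore be pink, while at $v_0$ it must be blue.

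The fix is to orient black edges according to the labels. First label the pink edges of $C$ alternately blue and pink; this is where the even parity is used. Then orient every maximal black run from its pink end toward its blue end, so runs following a pink edge go forward and runs following a blue edge go backward. Now (C1) holds at every vertex, and swapping blue/pink and reversing all arcs gives a valid complementary copy $C''$.

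\textbf{Treatment of $2$-cycles.} In the $2$-cycle formed by blue $xy$ and the arc $(y,x)$, the arc leaves $y$, so at $y$ it is not oriented toward the blue edge. Similarly, pink $xy$ together with $(x,y)$ fails at $y$, since that arc enters $y$ rather than leaving the pink edge.

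In fact no $2$-cycle consisting of one black arc and one blue or pink edge can satisfy (C1). This is the same parity obstruction, and it is why the hypothesis exempts $2$-cycles: each has exactly one pink edge in $2G^{\circ}$. A $2$-cycle on $\{x,y\}$ must instead become the blue--pink $2$-cycle in $F'$ and the directed black $2$-cycle $\{(x,y),(y,x)\}$ in $F''$, as in the proof of Lemma~\ref{G4C}. With these two corrections, your edge-count bookkeeping goes through.
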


The following two lemmas are straightforward generalizations of \cite[Lemma 8.1]{LDMSaj} and \cite[Lemma 8.2]{LDMSaj}, respectively.

\begin{lem}{\rm{\cite{LDMSaj}}}{\label{Gtool3}}
Let $G$ be a simple graph. Let $H_1, H_2, \ldots, H_s$ be subgraphs of $G$ such that $G = H_1 \oplus H_2 \oplus \dots \oplus H_s$. If, for all $i \in \{1, \ldots, s\}$, the multigraph $4H_i^{\bullet}$ admits an HOP decomposition $\D_i$ into $(C_{m_1}, \ldots, C_{m_t})$-subgraphs, then $\D = \bigcup_{i=1}^{s} \D_i$ is an HOP decomposition of $4G^{\bullet}$ into $(C_{m_1}, \ldots, C_{m_t})$-subgraphs.
\end{lem}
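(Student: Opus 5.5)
The plan is to verify the two defining conditions directly: that $\D$ is a decomposition of $4G^{\bullet}$ into $(C_{m_1},\ldots,C_{m_t})$-subgraphs, and that it satisfies Condition (C1). Throughout, the HOP‐coloring‐orientation of each $4H_i^{\bullet}$ is taken to be the one inherited from $4G^{\bullet}$. This is legitimate because $H_i$ is a subgraph of $G$. For each edge $xy$ of $H_i$, the four parallel copies of $xy$ in $4G^{\bullet}$ (one blue, one pink, two oppositely oriented black) restrict to a valid HOP‐coloring‐orientation of $4H_i$. So each $4H_i^{\bullet}$ is literally a sub-multigraph of $4G^{\bullet}$ with the same colors and orientations.

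Next I show that $\D$ partitions the edge set. Since $G = H_1 \oplus \cdots \oplus H_s$, every edge $xy$ of $G$ lies in exactly one $H_i$. Consequently, all four parallel copies of $xy$ in $4G^{\bullet}$ lie in exactly one $4H_i^{\bullet}$, and the sets $E(4H_i^{\bullet})$ partition $E(4G^{\bullet})$. Each $\D_i$ partitions $E(4H_i^{\bullet})$ into $(C_{m_1},\ldots,C_{m_t})$-subgraphs, so the union $\D=\bigcup_i \D_i$ partitions $E(4G^{\bullet})$ into such subgraphs. Every member of $\D$ is also a $(C_{m_1},\ldots,C_{m_t})$-subgraph of $4G^{\bullet}$, since it is one in $4H_i^{\bullet}\subseteq 4G^{\bullet}$.

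Finally, I check Condition (C1). It is a local condition on pairs of adjacent edges within a single cycle, and it refers only to their colors and to the orientations of the black edges relative to the shared vertex. Every cycle of $\D$ is a cycle of some $\D_i$, and the colors and orientations in $4H_i^{\bullet}$ agree with those in $4G^{\bullet}$. Hence (C1) holding in $\D_i$ implies it holds in $\D$.

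The only point that needs care is the first step: the HOP‐coloring‐orientations of the $4H_i^{\bullet}$ must be chosen compatibly, namely as restrictions of the one on $4G^{\bullet}$. If the lemma is read with arbitrary colorings of the $4H_i^{\bullet}$, one notes that any two HOP‐coloring‐orientations of $4H$ are isomorphic edgewise. They differ only by permuting labels among parallel copies that have the same color, or among black copies that have the same orientation. Relabeling the $\D_i$ accordingly therefore reduces to the compatible case. Everything else is routine bookkeeping.
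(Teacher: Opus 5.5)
Your proof is correct, and it is the routine verification the paper intends: the paper gives no argument of its own, only calling the lemma a straightforward generalization of \cite[Lemma 8.1]{LDMSaj}, and your three steps (inherited coloring-orientation, the sets $E(4H_i^{\bullet})$ partitioning $E(4G^{\bullet})$ because $G=H_1\oplus\cdots\oplus H_s$, and locality of \textbf{(C1)}) are exactly that verification. One wording point in your last paragraph: two HOP-coloring-orientations of $4H$ can differ by an arbitrary permutation of the four parallel copies of an edge, not only among same-colored copies; however, each of the four types (blue, pink, and the two black orientations) occurs exactly once per adjacent pair, so there is a unique endpoint- and type-preserving bijection between them, which is all your reduction needs.
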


\begin{lem}{\rm{\cite{LDMSaj}}}{\label{lem:Gtool4}}
Let $G$ be a simple graph. If $G$ admits a $(C_{m_1}, C_{m_2},\ldots,C_{m_t})$-decomposition, then $4G^{\bullet}$ admits an HOP  $(C_{m_1}, C_{m_2},\ldots,C_{m_t})$-decomposition.
\end{lem}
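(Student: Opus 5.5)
We prove Lemma~\ref{lem:Gtool4} by reducing it to Lemma~\ref{lem:Gtool2}. The idea is to double a given $(C_{m_1},\ldots,C_{m_t})$-decomposition of $G$ into one of $2G^{\circ}$ in which every cycle contains an even number of pink edges.

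Let $\mathcal{E}=\{F_1,\ldots,F_k\}$ be a $(C_{m_1},\ldots,C_{m_t})$-decomposition of $G$. In $2G^{\circ}$, each pair of adjacent vertices is joined by exactly one black and one pink edge. So $2G^{\circ}$ is the edge-disjoint union of a black copy $G_{\rm bl}$ and a pink copy $G_{\rm pk}$ of $G$. For each $j$, let $F_j^{\rm bl}$ and $F_j^{\rm pk}$ be the copies of $F_j$ in $G_{\rm bl}$ and in $G_{\rm pk}$, respectively. Set
\[
\F=\{F_1^{\rm bl},\ldots,F_k^{\rm bl},F_1^{\rm pk},\ldots,F_k^{\rm pk}\}.
\]
Since $\mathcal{E}$ partitions $E(G)$, the black copies partition the black edges and the pink copies partition the pink edges. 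Hence $\F$ is a $(C_{m_1},\ldots,C_{m_t})$-decomposition of $2G^{\circ}$.

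The main obstacle is the parity condition. Every cycle of $F_j^{\rm bl}$ has $0$ pink edges, which is even. But a cycle of $F_j^{\rm pk}$ of length $m_i$ has $m_i$ pink edges, which is odd whenever $m_i$ is odd. So the naive colouring fails, and the colours must be mixed inside each pair.

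For this, fix for each cycle $C$ of each $F_j$ one edge $e_C$. In the first copy of $C$, colour $e_C$ pink and all other edges black. In the second copy, colour $e_C$ black and all other edges pink. Every edge of $G$ still receives exactly one pink and one black copy, so this is again a decomposition of $2G^{\circ}$. The first copy has exactly $1$ pink edge, which is odd, so this alone does not work either. Instead, choose two distinct edges $e_C,e'_C$ of $C$ (possible since $m_i\ge 3$). Make them pink in the first copy, and let all remaining edges of $C$ be pink in the second copy. The first copy then has $2$ pink edges and the second has $m_i-2$. This works when $m_i$ is even.

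For odd $m_i$, the pink count of the two copies of $C$ always sums to $m_i$, since each edge is pink in exactly one copy. So both copies cannot have even pink counts, and the pairing-by-copy idea fails for odd cycles. I would therefore abandon Lemma~\ref{lem:Gtool2} and argue directly with $4G^{\bullet}$, following \cite[Lemma 8.2]{LDMSaj}.

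Each cycle $C=v_0v_1\cdots v_{m-1}v_0$ of each $F_j$ gives four cycles in $4G^{\bullet}$ on the same vertex sequence, using in total all four parallel edges on each edge of $C$:
\begin{itemize}
\item the black arcs oriented forward, $(v_i,v_{i+1})$;
\item the black arcs oriented backward, $(v_{i+1},v_i)$;
\item the blue edges;
\item the pink edges.
\end{itemize}
The first two are directed cycles, so consecutive black edges are oriented the same way and satisfy (C1). A monochromatic blue cycle or pink cycle, however, violates (C1), since (C1) allows blue only next to pink or a suitably oriented black. So the last two must be mixed.

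To fix this, alternate colours along the edges of $C$: blue, pink, blue, and so on in one cycle, and the complementary pattern in the other. This works when $m$ is even. When $m$ is odd, swap one edge $v_0v_1$ with a black arc. In the third cycle use pink, blue, pink, \ldots, blue on the path $v_1v_2\cdots v_{m-1}v_0$, and close it with a black arc on $v_0v_1$ oriented so that (C1) holds at both ends. The arc must point toward the blue edge and away from the pink edge. In the fourth cycle use the complementary colours and the opposite black arc. The two directed black cycles then each lose that edge, so we replace it by the blue or pink copy of $v_0v_1$ in each.

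The delicate step is checking the orientations at the junctions in this case analysis. After that, Lemma~\ref{Gtool3}, applied edge-disjointly to the subgraphs $F_j$, assembles the pieces into an HOP $(C_{m_1},\ldots,C_{m_t})$-decomposition of $4G^{\bullet}$.
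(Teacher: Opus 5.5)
\textbf{There is a genuine gap in your odd-$m$ case.} The even-$m$ construction is correct. The overall plan is also the intended one: build four cycles satisfying {\bf (C1)} on each cycle $C$ of the given decomposition, using all four parallel copies of every edge of $C$, then assemble over the $F_j$ via Lemma~\ref{Gtool3}. (This is how the lemma is used in the proof of Lemma~\ref{lem:Newtool-00}.) The odd case, however, fails at exactly the step you postponed, and it cannot be repaired by choosing orientations more carefully.

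Your third and fourth cycles use both black arcs on $v_0v_1$ and both coloured copies on every other edge of $C$. So each of the two remaining cycles consists of $m-1$ black arcs plus a single blue or pink copy of $v_0v_1$. No such cycle satisfies {\bf (C1)}. Take the forward cycle. The arc $(v_{m-1},v_0)$ points toward $v_0$, so {\bf (C1)} at $v_0$ forces the new edge to be blue. The arc $(v_1,v_2)$ points away from $v_1$, so {\bf (C1)} at $v_1$ forces it to be pink. The backward cycle fails symmetrically.

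There is a general invariant behind this. Traverse any cycle satisfying {\bf (C1)}. After a pink edge, every black arc up to the next non-black edge points in the direction of travel, and this forces that next edge to be blue. Symmetrically, a blue edge is always followed by a pink one. So blue and pink edges alternate, and every {\bf (C1)}-cycle contains equally many of each. In particular no valid cycle has exactly one non-black edge, so your choice of third and fourth cycles cannot be completed.

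The fix is to let the two mixed cycles take their black arcs from two different edges, and to break the second directed cycle in two places. For $m$ odd and $C=v_0v_1\cdots v_{m-1}v_0$, take the following four cycles:
\begin{enumerate}
\item[(1)] the forward directed black cycle;
\item[(2)] pink, blue, $\ldots$, blue on $v_0v_1,\ldots,v_{m-2}v_{m-1}$, closed by the arc $(v_0,v_{m-1})$;
\item[(3)] your third cycle, i.e.\ the arc $(v_1,v_0)$ followed by pink, blue, $\ldots$, blue on $v_1v_2,\ldots,v_{m-1}v_0$;
\item[(4)] blue on $v_0v_1$, the arcs $(v_{i+1},v_i)$ for $1\le i\le m-2$, and pink on $v_{m-1}v_0$.
\end{enumerate}
A direct check shows each cycle satisfies {\bf (C1)} at every vertex. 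Every edge of $C$ also receives exactly one blue copy, one pink copy and both black arcs:
\begin{itemize}
\item $v_0v_1$ gets the forward arc, pink, the arc $(v_1,v_0)$ and blue from (1)--(4) respectively;
\item $v_{m-1}v_0$ gets the forward arc, the arc $(v_0,v_{m-1})$, blue and pink;
\item each $v_iv_{i+1}$ with $1\le i\le m-2$ gets its forward arc from (1), its backward arc from (4), and complementary colours from (2) and (3).
\end{itemize}
With this replacement the rest of your argument goes through. Your abandoned first attempt via Lemma~\ref{lem:Gtool2} should simply be dropped from the write-up.
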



Lemma \ref{lems-1starter} summarizes a set of results from~\cite{LDMSaj}; these, along with the well-known results on decompositions of $K_n$ presented in Theorems \ref{theo:(Cm1,Cm2)-dec}, \ref{theo:Cm-dec}, and \ref{theo:OP4}, are needed for the proof of our main theorems. 

\begin{lem}{\rm{\cite {LDMSaj}}}{\label{lems-1starter}}
The following HOP $2$-factorizations exist: 
\begin{enumerate}[\bf(i)]
\item  {\label{lems-3starter-2}} an HOP $(C_2, C_2, C_5)$- and $(C_4, C_5)$-factorization of $4K_9^{\bullet}$; 
\item {\label{lems-3starter-4}} an HOP $(C_2, C_m)$-factorization of $4K_{m+2}^{\bullet}$, for $m\geq 5$ and $m \equiv 1\ ({\rm mod}\ 4)$;
\item {\label{lems-1starter-4}} an HOP $(C_2, C_m)$-factorization of $4K_{m+2}^{\bullet}$, for $m\geq 3$ and $m \equiv 3\ ({\rm mod}\ 4)$.
\end{enumerate}
\end{lem}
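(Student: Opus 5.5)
We would prove this by invoking Lemma~\ref{lem:Gtool2} rather than working in $4K_n^{\bullet}$ directly. So for each part we look for a $2$-factorization of $2K_n^{\circ}$ (pink/black copies) of the required type in which every cycle of length at least $3$ carries an even number of pink edges. A $2$-cycle $C_2$ in $2K_n^{\circ}$ is automatically a pink--black pair, and it is exempt from the parity condition. For $n=m+2$ there are $m+1$ factors, and for $n=9$ with total length $9$ there are $8$ factors. We realise $K_n$ as $\mathrm{Circ}(n-1;\pm S)\bowtie K_1$ with vertex $x_\infty$, and obtain each factorization by developing a small set of starter factors under the cyclic group.

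\textbf{Parts (ii) and (iii), $n=m+2$ with $m$ odd.} Here $n-1=m+1$ is even, so there is a diameter difference $\frac{m+1}{2}$. A single starter developed mod $m+1$ cannot work. To see this, count one starter factor: it has $m+2$ edges. The $\frac{m-1}{2}$ non-diameter differences each need one pink and one black edge, and the two $\infty$-edges need one pink and one black; this accounts for $m+1$ edges. So exactly one diameter edge remains per starter, whereas its orbit must supply both colours.

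We therefore use one of two devices:
\begin{itemize}
\item[(a)] develop a starter mod $m+1$ with the colour-swapping twist that translation by $\frac{m+1}{2}$ exchanges pink and black on the diameter only; or
\item[(b)] take two base factors and develop them mod $\frac{m+1}{2}$.
\end{itemize}
Plan (b) is the cleaner choice. The $C_2$ in both base factors is a diameter pair $\{x_0,x_{(m+1)/2}\}$, possibly shifted. The $C_m$ is a zigzag path through the remaining $m-1$ circulant vertices, closed through $x_\infty$, using every non-diameter difference and both $\infty$-edges across the two base factors exactly once in each colour. We then assign colours to the edges of the $C_m$ so that the pink count is even. The number of pink edges in the $C_m$ is essentially forced by the difference count, roughly half of the $m$ edges with a parity that depends on $\frac{m-1}{2}$. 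This is exactly why the cases $m\equiv1$ and $m\equiv3\pmod 4$ need different starters. In one residue class the natural zigzag works as is. In the other we swap the colours of one pair of differences between the two base factors, or route one edge through $\infty$ differently, to correct the parity.

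\textbf{Part (i), $n=9$.} The vertex set is $\ZZ_8\cup\{\infty\}$ and we need $8$ factors, so a single starter developed mod $8$ would suffice if colours behaved. Difference $4$ again causes the same problem, so we use two base factors developed mod $4$.
\begin{itemize}
\item For $(C_4,C_5)$, Lemma~\ref{lem:Gtool4} suggests a simpler route if $K_9$ has a $(C_4,C_5)$-factorization. But $K_9$ has only $36$ edges, which is $4$ factors of size $9$, so we would need $2K_9$ anyway. We will just exhibit explicit base factors, checking pink parity on both the $C_4$ and the $C_5$.
\item For $(C_2,C_2,C_5)$, the two $C_2$'s absorb two edges each, and only the $C_5$ needs an even pink count. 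We take the two $2$-cycles on diameter pairs and a pentagon through $\infty$.
\end{itemize}

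The main obstacle is the parity bookkeeping in (ii)/(iii). We must find, for general $m$, a zigzag $C_m$ whose difference multiset is correct and whose colour assignment yields an even number of pink edges, uniformly in $m$ within each residue class mod $4$. We would first try the standard Walecki-type zigzag $x_1,x_{-1},x_2,x_{-2},\dots$ with $\infty$ inserted, and compute the pink count as a function of $m$. We would then adjust a single edge-pair if its parity is wrong in one class. Verifying (C1) is automatic once Lemma~\ref{lem:Gtool2} applies.
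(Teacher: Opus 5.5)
The paper does not prove this lemma; it imports it from \cite{LDMSaj}. Your proposal supplies no starters. More seriously, the route you choose cannot work for part (ii) at all. In $2K_n^{\circ}$ every $2$-cycle consists of the pink and the black edge on one vertex pair. So in any $(C_2,C_m)$-factorization of $2K_{m+2}^{\circ}$, the $m+1$ two-cycles use exactly $m+1$ of the $\binom{m+2}{2}$ pink edges, and the $m$-cycles together carry $\binom{m+2}{2}-(m+1)=\frac{m(m+1)}{2}$ pink edges. For $m\equiv 1\pmod 4$ this number is odd, so some $m$-cycle has an odd number of pink edges, and Lemma~\ref{lem:Gtool2} can never be applied. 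This is a global invariant, so no swap of one pair of differences and no rerouting through $x_\infty$ can repair it. For (ii) you need a construction that does not pass through a factorization of $2K_{m+2}^{\circ}$. One option is to build it directly in $4K_{m+2}^{\bullet}$ along the lines of Lemma~\ref{lem:4Kn-d}: there the diameter $2$-cycle is pink/blue in one half of the development and black/black in the other, and the per-cycle requirement is that pink and blue edges alternate, not that the pink count is even.

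The specific schemes you describe also fail in (i) and (iii). Suppose each of two base factors is replicated by $\frac{n-1}{2}$ colour-preserving translations. Every translate has the pink count of its base factor, so the two base factors jointly carry exactly $\binom{n}{2}\cdot\frac{2}{n-1}=n$ pink edges, which is odd. Removing one pink edge per $2$-cycle leaves an odd number on the long cycles in every case:
\begin{itemize}
\item $m$ pink edges on the two $m$-cycles for $(C_2,C_m)$;
\item $5$ on the two $5$-cycles for $(C_2,C_2,C_5)$;
\item $9$ on the four long cycles for $(C_4,C_5)$.
\end{itemize}
So some long cycle always has an odd pink count.

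Your device (a) fails too. The twist changes by one the pink count of the long cycle containing the diameter edge. That edge cannot sit in a $2$-cycle instead, because a diameter $2$-cycle is mapped to itself by the twisted translation by $\frac{m+1}{2}$ and would be covered twice.

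The bookkeeping in (b) is also inconsistent. The base factors must contain exactly one pink and one black diameter edge, so at most one $2$-cycle in total can lie on a diameter pair. You place both $2$-cycles on diameter pairs, which would leave the two $m$-cycles, with $2m$ edges, to carry $2m+2$ non-diameter and $\infty$ edges; for $(C_2,C_2,C_5)$ you place four. Also, each non-diameter difference and $\infty$ must occur twice in each colour across the base factors, not once.

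For (i) and (iii) the global count is even ($20$, $36$, and $\frac{m(m+1)}{2}$ with $m\equiv 3\pmod 4$), so a $2K_n^{\circ}$ approach is not ruled out there. It would, however, need a different development and explicitly verified starters.

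Finally, Lemma~\ref{lem:Gtool4} is unavailable for $(C_4,C_5)$ because $K_9$ has no $(C_4,C_5)$-factorization (the exception in Theorem~\ref{theo:OP4}). The obstruction is not an edge count.
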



\begin{theo}{\rm{\cite { AbGrace, ADGAV, Blinc02, 3partite, El-Zanati}}}{\label{theo:(Cm1,Cm2)-dec}}
Let \( 3 \leq m_1 \leq m_2 \) and $m=m_1+m_2$. Then there exists a $(C_{m_1}, C_{m_2})$-decomposition of $K_n$ in each of the following two cases:
\begin{enumerate}[\bf(i)]
\item  $n\equiv 1\ ({\rm mod}\ 2m)$;
\item  $m$ is odd, $n\equiv m\ ({\rm mod}\ 2m)$, and $(m_1,m_2,n)\neq(4,5,9)$.
\end{enumerate}
\end{theo}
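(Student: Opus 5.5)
This statement is a compilation of known results from the cited literature, so within the paper it is invoked rather than reproved. If I were to prove it directly, I would use a cyclic (difference) construction on $\ZZ_n$ or $\ZZ_{n-1}\cup\{\infty\}$, organized according to the two residue classes of $n$.

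\textbf{Case (i).} Write $n=2mk+1$ and view $K_n$ as $\mathrm{Circ}(n;\pm\{1,\ldots,mk\})$. Partition the differences $\{1,\ldots,mk\}$ into $k$ sets $S_1,\ldots,S_k$ of size $m$ each. For each $S_j$ I would build a \emph{starter}: a $(C_{m_1},C_{m_2})$-subgraph of $K_n$ using exactly one edge of each difference in $S_j$. Developing each starter modulo $n$ then yields $n$ edge-disjoint copies covering every edge with a difference in $S_j$ exactly once. In total this gives $nk=n(n-1)/(2m)$ subgraphs, which is the correct count.

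Concretely, a cycle with consecutive signed differences $\epsilon_1d_1,\ldots,\epsilon_r d_r$ closes exactly when $\sum\epsilon_i d_i\equiv 0 \pmod n$. So for each $S_j$ I need to split it into an $m_1$-subset and an $m_2$-subset, choose signs on each so that the signed sum vanishes, and order the terms so that the partial sums are distinct (giving a genuine cycle). I would choose $S_j$ as consecutive blocks $\{(j-1)m+1,\ldots,jm\}$ and use alternating-sign patterns $d_1-d_2+d_3-\cdots$ on interleaved subsequences, in the spirit of Skolem/Langford-type constructions. Parity must be handled separately:
\begin{itemize}
\item when a cycle length is odd, a single extra sign flip or a pair swap corrects the sum;
\item the cases $m_1$ and $m_2$ both odd, one odd, and both even need slightly different patterns.
\end{itemize}

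\textbf{Case (ii).} Here $m$ is odd and $n=m(2k+1)$. The differences of $\ZZ_n$ are $\{1,\ldots,(n-1)/2\}$. Among them are the $(m-1)/2$ differences that are multiples of $2k+1$; these form the short orbit whose development gives $2k+1$ vertex-disjoint copies of $K_m$ on the cosets of the subgroup of order $m$. The remaining $mk$ differences I would partition into $k$ sets of size $m$ and treat with full-orbit starters exactly as in Case (i), now working modulo $n$ and avoiding the subgroup differences. Each coset copy of $K_m$ would then be decomposed directly into $(m-1)/2$ subgraphs of type $(C_{m_1},C_{m_2})$. Since $m_1+m_2=m$, each such subgraph is a $2$-factor of $K_m$, so what is needed is exactly an Oberwolfach solution $\mathrm{OP}(m_1,m_2)$ for odd $m$. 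For this I would use a Walecki-type or rotational construction. This base step is precisely where the excluded triple $(4,5,9)$ arises: $\mathrm{OP}(4,5)$ has no solution. For $(m_1,m_2)=(4,5)$ with $n=9(2k+1)>9$, I would therefore replace the $K_9$-based substructure by a different subgroup orbit or a mixed difference pattern.

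\textbf{Main obstacle.} I expect the hardest step to be the uniform construction of the starters. One must realize the two prescribed cycle lengths $m_1,m_2$ simultaneously, with zero signed sums and distinct partial sums, for every block $S_j$ and for all parities. My first attempt would be to fix a handful of explicit sign and ordering templates, each verified by a partial-sum argument. I would then fall back on small-case computer checks for the finitely many short blocks where these templates fail.
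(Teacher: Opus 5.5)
You are right that the paper gives no proof of this statement; it is imported from the cited literature. Your direct sketch, however, does not establish it. Everything hinges on the existence of the starters, which you leave open, and the one concrete design you commit to (consecutive blocks $S_j$) cannot work whenever a cycle length is odd.

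A closed cycle needs the signed sum of its differences to be $\equiv 0 \pmod n$. Take $(m_1,m_2)=(3,4)$ and $k\ge 3$, so $n=14k+1\ge 43$, and take the block $S_2=\{8,\dots,14\}$. For distinct $a,b,c\in S_2$ we have $3\le a+b-c\le 19$ and $27\le a+b+c\le 39<n$. Hence no three differences from $S_2$ close up into a triangle, and this block is unrealizable for \emph{every} $k\ge 3$. So the failures are not confined to finitely many short blocks, and a computer fallback cannot rescue the scheme.

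Your parity remedy is also off. Flipping the sign of $d$ changes the sum by $2d$, so it never changes parity. In any case, the real obstruction here is magnitude, not parity.

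The missing idea is to interleave the starters across the whole range of differences. This is what the cited papers supply, and what the paper's own Lemmas~\ref{lem:Newtool-1b} and~\ref{lem:Newtool-T2} do: the $i$-th starter uses differences such as $2i-1,2i,2k+2i,4k+2i,\dots$, and one or two closing edges are chosen so the signed sum is $0$ or $\pm n$. The leftover differences are then patched explicitly.

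You also never address that the two cycles of a starter must be vertex-disjoint. Closing each cycle separately does not yield a $(C_{m_1},C_{m_2})$-subgraph. You must place the cycles relative to each other and verify disjointness, as the paper does with its interval arguments.

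In case (ii), the split $K_n=(2k+1)K_m\oplus K_{(2k+1)[m]}$ is the right reduction; it is the one used in Lemma~\ref{lem:Newtool-T2}. But both halves remain open. The starters for $K_{(2k+1)[m]}$ run into the same difficulty as above. The base case is the two-table Oberwolfach problem for odd $m$, which is a substantial theorem in its own right (Theorem~\ref{theo:OP4}); naming a Walecki-type or rotational construction does not supply it.

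Moreover, for $(m_1,m_2)=(4,5)$ and $n=9(2k+1)\ge 27$ the reduction is unavailable altogether, since $K_9$ has no $(C_4,C_5)$-decomposition. Replacing it by a different subgroup orbit or a mixed difference pattern is not yet a construction. These orders are covered in the literature by a separate result, the one quoted as Theorem~\ref{theo:(Cm1,...,Cmt)-dec}.

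As written, your proposal describes the standard framework correctly but is not a proof. Within the paper, the citation is the entire justification.
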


\begin{theo}{\rm{\cite {BAHG,cycleIII}}}{\label{theo:Cm-dec}}
Let \( 3 \leq m \leq n \) be integers, and let \( n \) be odd. Then \( K_n \) admits a \( (C_m) \)-decomposition if and only if \( m \) divides the number of edges in \( K_n \).
\end{theo}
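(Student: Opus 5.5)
The condition is clearly necessary. Each $m$-cycle uses $m$ edges, so $m$ must divide $|E(K_n)| = n(n-1)/2$. Degree parity causes no further restriction, since every vertex of $K_n$ has even degree $n-1$ when $n$ is odd.

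For sufficiency I would follow the Alspach--Gavlas/\v{S}ajna strategy: a reduction to finitely many base orders per residue class modulo $2m$, together with a recursive "add $2m$ vertices" step. The condition $m \mid n(n-1)/2$ with $n$ odd restricts $n$ to a set $R_m$ of residues modulo $2m$, and it is preserved under $n \mapsto n+2m$. So it suffices to do two things:
\begin{itemize}
\item[(a)] construct a $C_m$-decomposition of $K_{n_0}$ for the least admissible $n_0 \geq m$ in each residue class of $R_m$;
\item[(b)] show that a $C_m$-decomposition of $K_n$ yields one of $K_{n+2m}$.
\end{itemize}

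For (b) I would split $V(K_{n+2m})$ as $A \,\dot\cup\, B$ with $|A| = n$ and $|B| = 2m$. Then
\[
K_{n+2m} = K_n \oplus K_{2m+1} \oplus K_{n-1,2m},
\]
where $K_{2m+1}$ is the graph on $B \cup \{a\}$ for a fixed $a \in A$, and $K_{n-1,2m}$ joins $A \setminus \{a\}$ to $B$. The piece $K_{2m+1}$ has a $C_m$-decomposition: write it as $\mathrm{Circ}(2m;\pm S)\bowtie K_1$ and develop $m$-cycles with distinct differences cyclically, a Walecki-type construction.

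When $m$ is even, $K_{n-1,2m}$ decomposes into $C_m$ by Sotteau's theorem. This needs both parts even and of size at least $m/2$, which holds here, and $m \mid 2m(n-1)$, which is automatic.

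When $m$ is odd, bipartite pieces contain no $m$-cycles, so the recursion must be reorganised. I would instead write $K_{n+2m} = K_n \oplus K_{2m+1} \oplus K_{n-1,2m}$ and then trade edges between pieces. Concretely, I would absorb the bipartite graph into decompositions of $K_{2m} \bowtie \overline{K_{n-1}}$-type graphs, using $m$-cycles that each contain exactly one edge inside $B$. This is done by pairing difference classes of $\mathrm{Circ}(2m;\pm S)$ with perfect-matching-like blocks of $K_{n-1,2m}$. Alternatively, I would use $K_{n+2m} = K_{n} \oplus (K_{2m} \bowtie \overline{K_n})$ and decompose the join into $C_m$ via a cyclic starter on $\mathbb{Z}_{2m}$ extended over the $n$ extra points.

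The main obstacle is (a) together with the odd-$m$ gluing. Base orders $n_0$ can be as large as roughly $2m$ and lie in many residue classes, so each base case needs its own cyclic construction. For these I would view $K_{n_0}$ as $\mathrm{Circ}(n_0;\pm S)$ or $\mathrm{Circ}(n_0-1;\pm S)\bowtie K_1$. I would then seek $m$-cycles whose edges use each difference a controlled number of times, so that developing them modulo $n_0$ (or $n_0-1$, using $x_\infty$ for leftover differences) covers every edge exactly once. I expect the hardest cases to be $n_0 < 2m$, where $n_0 \equiv m$ or a nearby residue. There the cycles are long compared to the order and few differences are available, so I would first try a "zig-zag" cycle $x_0, x_1, x_{-1}, x_2, x_{-2}, \ldots$ truncated and closed with one or two special edges. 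I would then fix the resulting difference multiset by ad hoc modifications, case by case modulo $4$.
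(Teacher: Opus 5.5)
\textbf{Odd $m$: the inductive step cannot work.} The paper gives no proof of this statement; it is quoted from Alspach--Gavlas (odd $m$) and \v{S}ajna (even $m$). So your outline must stand on its own, and for odd $m$ it does not. In (b) you keep the given decomposition of $K_n$ on $A$. Whatever splitting you use afterwards, the graph left to decompose is $K_{2m}\bowtie\overline{K_n}$: all edges inside $B$ plus all $A$--$B$ edges. It has $m(2m-1)+2mn$ edges, so it needs $2n+2m-1$ cycles. Since $A$ is independent in it, an $m$-cycle with $j$ vertices in $A$ has exactly $m-2j$ edges inside $B$. For odd $m$ this number is odd, hence at least $1$. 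Consequently $2n+2m-1\le\binom{2m}{2}$, i.e.\ $n\le (m-1)(2m-1)/2$. For larger $n$, no $C_m$-decomposition of $K_{n+2m}$ contains one of $K_n$ at all.

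Both of your repairs run into this count: cycles with exactly one edge in $B$, and a cyclic starter for $K_{2m}\bowtie\overline{K_n}$. For $m=3$ this is exactly the Doyen--Wilson fact that no STS$(13)$ contains an STS$(7)$. ``Trading edges'' with the $K_n$ piece means abandoning (b) as you formulated it. For odd $m$ the reduction has to be restructured so that the piece kept intact has bounded order. For instance, use $K_{r+2mq}=K_r\oplus q\,(K_{2m}\bowtie\overline{K_r})\oplus K_{q[2m]}$ with $r$ below that bound. This in turn needs $C_m$-decompositions of complete equipartite graphs, which contain odd cycles only when there are at least three parts.

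\textbf{Even $m$ and the base cases.} Your even-$m$ step is fine via Sotteau's theorem, with one caveat about $K_{2m+1}$. Decompose it cyclically over $\mathbb{Z}_{2m+1}$, using one base $m$-cycle with each difference $1,\dots,m$ once. Developing over $\mathbb{Z}_{2m}\cup\{\infty\}$ cannot give the required $2m+1$ cycles when $m$ is even. That would need an orbit of odd length, and no $m$-cycle has one.

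Even so, part (a) is left undone. The base orders range over all admissible $n$ with $m\le n<3m$, not just up to about $2m$. For example, with $m=21$ the class $n\equiv 15 \pmod{42}$ first occurs at $n=57$. These base cases are where almost all of the work in the cited papers lies. A truncated zig-zag cycle ``fixed case by case'' is a heuristic, not a construction. As it stands, the proposal is an outline whose hard part is unproved for every $m$, and whose inductive step fails for odd $m$.
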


\begin{theo}{\rm{\cite{TT-21}}}{\label{theo:OP4}}
Let \( 3 \leq m_1 \leq m_2 \) be integers, and let \( n=m_1+m_2 \) be odd. Then \( K_n \) admits a \( (C_{m_1}, C_{m_2}) \)-factorization if and only if \( (m_1, m_2) \notin \{(3,3), (4,5)\} \).
\end{theo}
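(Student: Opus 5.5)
Theorem~\ref{theo:OP4} is quoted from \cite{TT-21}. If I had to prove it myself, I would split it into necessity and sufficiency and build the sufficiency part on a $2$-starter construction under a cyclic group action.

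\textbf{Necessity.} Since $n=m_1+m_2$ is odd, the pair $(3,3)$ never actually arises; it is listed only for completeness. For $(4,5)$ with $n=9$, nonexistence is a finite statement and would be settled by exhaustive computer search, as is classical for this case. It is also consistent with the exceptional case $(4,5,9)$ in Theorem~\ref{theo:(Cm1,Cm2)-dec}.

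\textbf{Sufficiency: setup.} Write $n=2k+1$ and take the vertex set $(\ZZ_k\times\{0,1\})\cup\{\infty\}$, with $\ZZ_k$ acting by translation on the first coordinate and fixing $\infty$. The edges of $K_n$ fall into the following orbits:
\begin{itemize}
\item pure differences $\pm d$ on each level $i\in\{0,1\}$, for $d\in\{1,\ldots,\lfloor k/2\rfloor\}$ (when $k$ is even, the diameter difference gives a short orbit of size $k/2$);
\item mixed differences $d\in\ZZ_k$ between the two levels;
\item the two orbits of edges joining $\infty$ to level $0$ and to level $1$.
\end{itemize}
The goal is a single base $(C_{m_1},C_{m_2})$-factor $F$ whose edges meet each full orbit exactly once. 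When $k$ is even, each diameter orbit has only $k/2$ edges, so $F$ must instead contain two diameter edges on the same level, forming a translate-invariant pair. Then $\{F+j : j\in\ZZ_k\}$ is a $(C_{m_1},C_{m_2})$-factorization, which is a \emph{$2$-starter}. A counting check shows that $F$ has $n$ edges and that the number of orbits, counted with this adjustment, is also $n$, so the target is consistent.

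\textbf{Sufficiency: constructing $F$.} I would let $\infty$ lie on the cycle $C_{m_2}$ and arrange that each cycle uses mostly pure edges within one level, together with a few mixed edges to pass between levels. The explicit patterns would be zig-zag paths with consecutive differences, of the type used in Walecki-type constructions. I would split into cases according to the parities of $m_1$ and $m_2$ and the residue of $k$ modulo $4$. In each case I would write down $F$ explicitly and verify the difference counts. Small values of $n$, say $n\le 15$, would be handled by direct computer-found starters, since the general patterns need enough room to work.

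\textbf{Main obstacle.} I expect the hardest part to be the explicit base factors for small $m_1$, particularly $m_1\in\{3,4\}$, and when $k$ is even. In these cases the diameter edges and the two $\infty$ edges force an unbalanced use of differences. I would first try to place the diameter edge pair in the longer cycle and absorb the imbalance through one extra mixed edge. If that fails for a specific residue class, I would fall back on a $1$-rotational construction over $\ZZ_{n-1}$, using a base factor that is fixed by the involution $x\mapsto x+\tfrac{n-1}{2}$.
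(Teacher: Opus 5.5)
The paper gives no proof of this theorem; it quotes it from the literature. Your sketch therefore has to stand on its own, and as written it does not. Necessity is fine: $(3,3)$ is vacuous because it would give $n=6$, and nonexistence for $(4,5)$ is the classical finite result. For sufficiency, however, the entire substance, an explicit base factor for every admissible pair $(m_1,m_2)$, is deferred to ``I would write down $F$ explicitly.'' That is a framework, not a proof. Worse, the framework is wrong in half of the cases.

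\textbf{The case $k$ even fails.} Here $k=(n-1)/2$ is even, i.e.\ $n\equiv 1 \pmod 4$.
\begin{itemize}
\item A diameter edge $e=\{(x,i),(x+k/2,i)\}$ satisfies $e+k/2=e$. So in $\{F+j : j\in\ZZ_k\}$, every diameter edge of $F$ is covered twice.
\item Two diameter edges on one level therefore cover that short orbit four times, while the short orbit on the other level is never covered.
\item Your edge count is $2k+2$ (one edge from each of the $2k$ full orbits, plus two diameter edges), not $n=2k+1$. So the ``counting check'' fails. For odd $k$ your orbit count is correct.
\end{itemize}
Developing a single factor over all of $\ZZ_k$ can never work when $k$ is even. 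This is not a matter of a few hard residue classes.

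\textbf{How to repair it.} You need base factors invariant under an involution. One option is two factors $F_1,F_2$ fixed by $\tau: x\mapsto x+k/2$, developed only over $j\in\{0,\ldots,k/2-1\}$. Jointly they must contain a pair $\{e,\tau(e)\}$ from each full orbit and exactly one diameter edge per level. The other option is your $1$-rotational alternative over $\ZZ_{2k}$ with $\tau: x\mapsto x+k$.

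\textbf{Invariance forces structure you have not accounted for.}
\begin{itemize}
\item Since $m_1\neq m_2$, $\tau$ maps each cycle to itself. Since $\tau$ fixes only $\infty$, it acts on the cycle through $\infty$ as a reflection fixing no other vertex.
\item Hence that cycle must be odd. Its edge opposite $\infty$ is then the only permitted diameter edge of the factor.
\item The other cycle must be even and mapped to itself by a half-turn.
\item Thus $\infty$ must lie on the odd cycle. This contradicts your choice to put $\infty$ on $C_{m_2}$ whenever $m_2$ is even, e.g.\ $(3,4)$ or $(5,6)$.
\end{itemize}
Any explicit family has to be built around this constraint and must break down precisely at $(4,5)$. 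These constructions are exactly what carries the proof, and none of them is supplied.
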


\begin{theo}{\rm{\cite{ADGAV}}}\label{theo:(Cm1,...,Cmt)-dec}
Assume $3 \leq m_1 \leq \ldots \leq m_t$ are integers and $m_1 + \ldots + m_t = m \leq 10$. There exists a $(C_{m_1}, \ldots, C_{m_t})$-decomposition of $K_n$ if and only if $m \leq n$, $n$ is odd, $m$ divides the number of edges in $K_n$, and $n \neq 9$ whenever $(m_1, m_2, \ldots, m_t) \in \{(3, 3), (4, 5)\}$.
\end{theo}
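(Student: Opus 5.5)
The necessity part is routine and I would dispose of it first. A $(C_{m_1},\ldots,C_{m_t})$-subgraph uses $m$ distinct vertices, so $m\le n$. Every vertex of such a subgraph has degree $0$ or $2$, so each vertex degree $n-1$ of $K_n$ must be even, i.e.\ $n$ is odd. Counting edges shows that $m$ divides $\binom{n}{2}$.

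The two exceptions with $n=9$ must be shown impossible. For $(4,5)$ the count gives $36/9=4$ subgraphs, each of which is a $2$-factor of $K_9$, so a decomposition would be a $(C_4,C_5)$-factorization. This is excluded by Theorem~\ref{theo:OP4} (the OP$(4,5)$ exception). For $(3,3)$ with $n=9$, six $2C_3$-subgraphs would be needed. I would rule this out by a short structural argument, namely a case analysis on how the $12$ triangles cover the $36$ edges, combined with a parity and degree count at the three vertices missed by each subgraph. Failing that, I would cite or perform the exhaustive search.

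For sufficiency, the key observation is that $m\le 10$ leaves only finitely many cycle types $(m_1,\ldots,m_t)$. For each type, the admissible $n$ fall into finitely many residue classes modulo $2m$. My plan is a recursive construction for each type. I would identify the admissible residues $r$ modulo $2m$ and pick a smallest admissible base value $n_0\equiv r$ for each. I would then pass from $n$ to $n+2m$ via the decomposition
\[
K_{n+2m}=K_n\oplus (K_{2m+1}-\text{(a vertex shared)})\oplus K_{n-1,2m},
\]
or, better, via $K_{n+2m}=K_n\oplus (K_{2m}\bowtie \overline{K_{n}})$ read off a suitable circulant. When $t\ge2$, the mixed piece would be decomposed by combining a $(C_{m_1})$-piece with the remaining cycles, using Theorems~\ref{theo:(Cm1,Cm2)-dec} and~\ref{theo:Cm-dec} for the single- and two-cycle ingredients. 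For the bipartite portion I would only place even cycles there, or pair odd cycles with edges inside the $K_{2m}$ part. The base cases $K_{n_0}$, with $n_0<4m+1$ say, would be done directly. I would view $K_{n_0}=\mathrm{Circ}(n_0;\pm S)$, or $\mathrm{Circ}(n_0-1;\pm S)\bowtie K_1$, and choose a starter $(C_{m_1},\ldots,C_{m_t})$-subgraph whose edge differences cover $S$ exactly once. Developing this starter cyclically gives the decomposition; where $m\nmid n_0$ in the cyclic sense, short orbits can be absorbed by a second starter.

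I expect the main obstacle to be the types containing several odd cycles, such as $(3,3)$, $(3,3,3)$, $(3,7)$ and $(5,5)$, in residue classes where $n\not\equiv 1 \pmod{2m}$. There the bipartite filler cannot host odd cycles, and the cyclic difference method fails when $n_0$ is small and the orbit lengths do not divide nicely. For these I would first try starters in $\mathrm{Circ}(n_0-1;\pm S)\bowtie K_1$, using the infinity vertex inside an odd cycle. If that fails, I would fall back on explicit (computer-found) decompositions of the finitely many small base graphs, which the recursion then propagates to all admissible $n$.
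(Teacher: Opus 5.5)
The paper does not prove this statement; it imports it from \cite{ADGAV}. Your necessity part is fine, and the $(4,5)$, $n=9$ exception does follow from Theorem~\ref{theo:OP4} as you say. For $(3,3)$, $n=9$ you need no case analysis or search. The $12$ triangles would form an STS$(9)$, which is unique, namely the affine plane $AG(2,3)$. Two of its blocks are disjoint only if they are parallel lines, and each parallel class has $3$ lines. So the blocks cannot be paired into vertex-disjoint pairs.

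The genuine gap is in sufficiency. It is fatal for every type containing an odd cycle, and these include most of the cases for which the paper actually relies on this citation: $(3,3)$ with $n\equiv 9 \pmod{12}$, $(3,3,3)$, $(3,3,4)$, and $(3,7)$, $(5,5)$ with $n\equiv 5\pmod{20}$.

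Both versions of your step $n\to n+2m$ keep the old $K_n$ intact and must decompose the rest, $H=K_{2m}\bowtie\overline{K_n}$. Your $K_{2m+1}\oplus K_{n-1,2m}$ is the same graph, and its bipartite summand cannot contain any odd cycle at all. In $H$ the $n$ old vertices are independent, so every cycle uses an even number of old--new edges. Hence every odd cycle uses at least one of the $\binom{2m}{2}=m(2m-1)$ edges inside the new $K_{2m}$. A $G$-decomposition of $H$ has $|E(H)|/m=2n+2m-1$ members, so if $G$ has an odd cycle you need $2n+2m-1\le m(2m-1)$, which fails for all large $n$. For $(3,3)$, with two triangles per member, you already need $2(2n+11)\le 66$, i.e.\ $n\le 11$. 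So the recursion fails at the very first step from the bases $n_0=13$ and $n_0=21$. No base decompositions, computer-found or not, can be propagated by this recursion for non-bipartite types.

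What is needed is a construction that also breaks up the old vertex set, or avoids recursion altogether. One option is a recursion through group divisible designs, so that cross edges fall into complete multipartite pieces with at least three parts, which can host odd cycles. Another is a direct difference construction valid for every $n$ in a residue class, as the paper does for its own cases in Lemmas~\ref{lem:Newtool-1b} and~\ref{lem:m=8-Samll1}.

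Two further points. Theorems~\ref{theo:(Cm1,Cm2)-dec} and~\ref{theo:Cm-dec} decompose complete graphs, not mixed pieces like $H$. Also, superimposing a $C_{m_1}$-decomposition on a decomposition into the remaining cycles does not make the cycles within one member vertex-disjoint, nor does it match up the numbers of pieces. Since no starter or base decomposition is actually exhibited, the sufficiency part is currently a programme rather than a proof.
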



\section{The Tools}{\label{sec:4}}


\begin{lem}{\label{lem:Newtool-00}}
Assume $G$ is a simple graph. Let $H$ be a $(C_{m_1}, C_{m_2}, \ldots, C_{m_t})$-subgraph of $G$, and let $H'$ be an edge-disjoint union of two $1$-regular subgraphs of $G$ of order $2\alpha$.
If $G$ admits a decomposition into subgraphs isomorphic to $H \mathbin{\dot{\cup}} H'$, then $4G^{\bullet}$ admits an HOP $(C_2^{\langle \alpha \rangle}, C_{m_1}, C_{m_2}, \ldots, \allowbreak C_{m_t})$-decomposition.
\end{lem}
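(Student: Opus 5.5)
The plan is to reduce the statement to a single copy of $H \mathbin{\dot{\cup}} H'$ and then apply Lemma~\ref{Gtool3}. Let $\{B_1,\dots,B_k\}$ be the given decomposition of $G$, where each $B_j \cong H \mathbin{\dot{\cup}} H'$. By Lemma~\ref{Gtool3}, it suffices to show that for each $j$ the multigraph $4B_j^{\bullet}$, with the HOP-coloring-orientation inherited from $4G^{\bullet}$, admits an HOP decomposition into $(C_2^{\langle \alpha \rangle}, C_{m_1}, \ldots, C_{m_t})$-subgraphs. Fix one $B = H_0 \mathbin{\dot{\cup}} H_0'$, where $H_0 \cong H$ and $H_0' \cong H'$ are vertex-disjoint. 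Write $H_0' = M_1 \oplus M_2$, where $M_1$ and $M_2$ are edge-disjoint $1$-regular graphs on the same $2\alpha$ vertices, so each has $\alpha$ edges. An edge count fixes the number of subgraphs: $4B^{\bullet}$ has $4(m_1+\dots+m_t) + 8\alpha$ edges, and each target subgraph has $(m_1+\dots+m_t) + 2\alpha$ edges. So we need exactly four subgraphs.

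\emph{The cycle part.} $H_0$ is itself a $(C_{m_1}, \ldots, C_{m_t})$-subgraph, so it trivially admits a $(C_{m_1}, \ldots, C_{m_t})$-decomposition with one member. Lemma~\ref{lem:Gtool4} then gives an HOP $(C_{m_1},\ldots,C_{m_t})$-decomposition of $4H_0^{\bullet}$. Counting edges again, this decomposition consists of exactly four $(C_{m_1},\ldots,C_{m_t})$-subgraphs $D_1, D_2, D_3, D_4$, and each of them spans $V(H_0)$.

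\emph{The digon part.} For each edge $xy$ of $H_0'$, the four parallel copies in $4B^{\bullet}$ split into two $2$-cycles. The first is the blue edge together with the pink edge; it satisfies (C1) because its two adjacencies each pair one blue with one pink edge. The second is the two black arcs $(x,y)$ and $(y,x)$; traversed as a cycle they are oriented the same way, so (C1) again holds. Define four families of vertex-disjoint digons:
\begin{itemize}
\item $E_1$: the blue--pink digons on the edges of $M_1$;
\item $E_2$: the black digons on the edges of $M_1$;
\item $E_3$, $E_4$: the same two constructions on the edges of $M_2$.
\end{itemize}
Each $E_i$ is a $(C_2^{\langle\alpha\rangle})$-subgraph, because $M_1$ and $M_2$ are $1$-regular on $2\alpha$ vertices. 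Together the $E_i$ use each copy of each edge of $H_0'$ exactly once.

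\emph{Combining.} Set $F_i = D_i \mathbin{\dot{\cup}} E_i$ for $i=1,\dots,4$. Since $V(H_0) \cap V(H_0') = \emptyset$, each $F_i$ is a $(C_2^{\langle\alpha\rangle}, C_{m_1},\ldots,C_{m_t})$-subgraph. Every cycle in $F_i$ satisfies (C1), and the $F_i$ partition the edges of $4B^{\bullet}$. This gives the required decomposition of $4B^{\bullet}$, and Lemma~\ref{Gtool3} completes the proof. The only point that needs care is reading the hypothesis correctly: $H'$ must be vertex-disjoint from $H$ and consist of two edge-disjoint perfect matchings of the same $2\alpha$-set, which is what makes each $E_i$ a set of exactly $\alpha$ disjoint digons.
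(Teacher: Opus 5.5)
Your proof is correct and is essentially the paper's argument: you get four (C1)-colored copies of the cycle part from Lemma~\ref{lem:Gtool4}, and you attach one family of digons to each copy (blue--pink or black, on one matching or the other). The paper orders these pairings differently and takes the union over the decomposition directly rather than citing Lemma~\ref{Gtool3}. One correction concerns your closing remark: nothing in your argument uses that the two $1$-regular subgraphs lie on the same $2\alpha$-set, and the paper does not assume this. In every application with $\alpha=1$ (e.g.\ Lemma~\ref{lem:Newtool-1b}) the two pieces are distinct edges on different vertex pairs, which your reading would make impossible in a simple graph, so you should drop that restriction.
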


\begin{proof}
Let $\D= \{F_1, F_2, \ldots, F_k\}$ be a decomposition of $G$ such that each $F_i$, for $i\in\{1,2,\ldots,k\}$, is isomorphic to $H\mathbin{\dot{\cup}} H'$. 
Fix $i\in\{1,2,\ldots,k\}$; then $F_i=H_i \mathbin{\dot{\cup}} H'_i$,  where $H_i$ is isomorphic to $H$, and $H'_i$ is isomorphic to $H'$. Now, let $H_i^{(1)},H_i^{(2)},H_i^{(3)},H_i^{(4)}$ be four copies of  the 2-regular subgraph $H_i$. 

Next, for $j\in\{1,2,\ldots,2\alpha\}$,  let $E_j$ be the $j^{{\rm th}}$ subgraph isomorphic to $K_2$ in $H'_i$. We may assume $E_1, \ldots, E_{\alpha}$ as well as $E_{\alpha+1}, \ldots, E_{2\alpha}$ are pairwise  disjoint, and $E_1\cup \ldots \cup E_{\alpha}$ is edge-disjoint from $E_{\alpha+1} \cup \ldots \cup E_{2\alpha}$.  For each $j\in\{1,2,\ldots,2\alpha\}$, let $E_j^{(1)}$ and $E_j^{(2)}$ be 2-cycles  that jointly contain four (parallel) copies of the edge of $E_j$. Now, apply Lemma \ref{lem:Gtool4} to color the edges in $H_i^{(1)},H_i^{(2)},H_i^{(3)},H_i^{(4)}$ and $E_j^{(1)}, E_j^{(2)}$, for all $j$,  so that they satisfy Condition {\bf (C1)} of Definition \ref{def}, and so that
\begin{itemize}
\item $H_i^{(1)}\oplus H_i^{(2)}\oplus H_i^{(3)}\oplus H_i^{(4)}=4H_i^{\bullet}$, and
\item $E_j^{(1)}\oplus E_j^{(2)}=4E_j^{\bullet}$.
\end{itemize}
Now, let $T_{i_1}, T_{i_2}, T_{i_3},T_{i_4}$ be the following subgraphs:
 $$T_{i_1}= H_i^{(1)}\cup  E_1^{(1)} \cup E_2^{(1)} \cup \ldots \cup E_{\alpha}^{(1)};$$
 $$T_{i_2}= H_i^{(2)}\cup  E_{\alpha+1}^{(1)} \cup E_{\alpha+2}^{(1)} \cup \ldots \cup E_{2\alpha}^{(1)};$$
 $$T_{i_3}= H_i^{(3)}\cup  E_1^{(2)} \cup E_2^{(2)} \cup \ldots \cup E_{\alpha}^{(2)};$$
 $$T_{i_4}= H_i^{(4)}\cup  E_{\alpha+1}^{(2)} \cup E_{\alpha+2}^{(2)} \cup \ldots \cup E_{2\alpha}^{(2)}.$$
It is easy to see that  $T_{i_1}, T_{i_2}, T_{i_3},T_{i_4}$ are all $(C_2^{\langle \alpha \rangle}, C_{m_1}, \ldots, C_{m_t})$-subgraphs of $4G^{\bullet}$ that satisfy Condition {\bf (C1)} of Definition \ref{def}, and  
$$T_{i_1}\oplus T_{i_2} \oplus T_{i_3}\oplus T_{i_4}=4(H_i \mathbin{\dot{\cup}}  H_i')$$
We apply the above procedure for each $F_i$, for $i\in\{1,2,\ldots,k\}$,  so each $F_i$ gives rise to four $(C_2^{\langle \alpha \rangle}, C_{m_1}, \ldots, C_{m_t})$-subgraphs. Since $\D= \{F_1, F_2, \ldots, F_k\}$ is a decomposition of $G$, the resulting $(C_2^{\langle \alpha \rangle}, C_{m_1}, \ldots, C_{m_t})$-subgraphs form an HOP $(C_2^{\langle \alpha \rangle}, C_{m_1}, \ldots, C_{m_t})$-decomposition of $4G^{\bullet}$. 
\end{proof}

\begin{cor}{\label{lem:Newtool-01}}
Let $m_1, m_2, \ldots, m_t \geq 3$ be integers. Assume that $G$ is a simple graph that admits a $(C_{m_1}, C_{m_2}, \ldots, C_{m_t})$-decomposition, where $m_1, m_2, \ldots, m_s$ are even for some $s \leq t$.
Then $4G^{\bullet}$ admits an HOP $(C_2^{\langle \frac{m_1+m_2+\ldots+m_s}{2} \rangle}, C_{m_{s+1}},\ldots, C_{m_t})$-decomposition.  
\end{cor}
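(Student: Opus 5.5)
The plan is to reduce the corollary to Lemma~\ref{lem:Newtool-00}. The key observation is that an even cycle is the edge-disjoint union of two perfect matchings of its vertex set. Let $\D$ be the assumed $(C_{m_1}, \ldots, C_{m_t})$-decomposition of $G$, and fix a member $F \in \D$. Write $F = H \mathbin{\dot{\cup}} K$, where $K$ is the union of the cycles of lengths $m_1, \ldots, m_s$ and $H$ is the $(C_{m_{s+1}}, \ldots, C_{m_t})$-subgraph formed by the remaining cycles.

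For each even cycle $C_{m_j}$ with $j \le s$, I will list its edges in cyclic order and split them into the odd-indexed and even-indexed edges. This gives two edge-disjoint matchings $M_j$ and $M_j'$, each covering all $m_j$ vertices of the cycle. The cycles in $K$ are vertex-disjoint, so $M = \bigcup_{j\le s} M_j$ and $M' = \bigcup_{j\le s} M_j'$ are edge-disjoint $1$-regular subgraphs of $G$. Each has order $m_1 + \cdots + m_s = 2\alpha$, where $\alpha = \frac{m_1+\cdots+m_s}{2}$, and $K = M \cup M'$. Thus $K$ is exactly an $H'$ of the type required in Lemma~\ref{lem:Newtool-00}, built from $2\alpha$ copies of $K_2$. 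The first $\alpha$ of these copies are pairwise disjoint, the last $\alpha$ are pairwise disjoint, and the two groups are edge-disjoint. Moreover $H'$ is vertex-disjoint from $H$.

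Next I will check that all members of $\D$ give isomorphic graphs $H \mathbin{\dot{\cup}} H'$. Every member is a disjoint union of cycles with the same multiset of lengths, and the splitting above depends only on the cycle lengths. Hence $\D$ is a decomposition of $G$ into subgraphs isomorphic to $H \mathbin{\dot{\cup}} H'$, with $H$ a $(C_{m_{s+1}}, \ldots, C_{m_t})$-subgraph.

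Lemma~\ref{lem:Newtool-00} then directly yields an HOP $(C_2^{\langle \alpha\rangle}, C_{m_{s+1}}, \ldots, C_{m_t})$-decomposition of $4G^{\bullet}$. The proof of that lemma is applied member by member, so it only needs the matching structure inside each $F$, which we have just supplied. The degenerate case $s = t$, where $H$ is empty, is handled the same way: each factor contributes four collections of $2$-cycles. There is no real obstacle here. The only points needing care are verifying that each $m_j$ is even, so that the alternating split produces perfect matchings, and confirming that the hypotheses of Lemma~\ref{lem:Newtool-00} are stated with the right orders and disjointness.
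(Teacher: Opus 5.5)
Your proposal is correct and follows essentially the same route as the paper: split each even cycle into its two alternating perfect matchings, so that each member of the decomposition becomes a $(C_{m_{s+1}},\ldots,C_{m_t})$-subgraph together with two edge-disjoint $1$-regular subgraphs of order $m_1+\cdots+m_s$, and then apply Lemma~\ref{lem:Newtool-00}. Your extra checks are details the paper leaves implicit: that the members are isomorphic, that the $K_2$'s satisfy the disjointness hypotheses, and that the degenerate case $s=t$ goes through.
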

\begin{proof}
Since a cycle of even length $m$ is an edge-disjoint union of two $1$-regular graphs of order $m$,  we see that $C_{m_1} \mathbin{\dot{\cup}} \ldots \mathbin{\dot{\cup}} C_{m_s}$ is an edge-disjoint union of two $1$-regular subgraphs of order $m_1+m_2+\ldots+m_s$. Therefore,   a  $(C_{m_1},C_{m_2}, \ldots, C_{m_t})$-decomposition of $G$ gives rise to a decomposition of $G$  into subgraphs consisting of a disjoint union of a   $(C_{m_{s+1}}, \ldots, C_{m_t})$-subgraph and two edge-disjoint $1$-regular subgraphs of order $m_1+m_2+\ldots+m_s$. Now, applying Lemma \ref{lem:Newtool-00}, we see $4G^{\bullet}$ admits an HOP $(C_2^{\langle \frac{m_1+m_2+\ldots+m_s}{2} \rangle}, C_{m_{s+1}},\ldots, C_{m_t})$-decomposition.
\end{proof}

The following lemma gives a sufficient condition for the multigraph $4G^{\bullet}$ to admit an HOP $(C_2^{\langle t \rangle})$-decomposition.

\begin{lem}{\label{G4C}}
Let $G$ be a simple graph. If $G$ admits a decomposition into $1$-regular subgraphs of order $2t$, then $4G^{\bullet}$ admits an HOP $(C_2^{\langle t \rangle})$-decomposition.
\end{lem}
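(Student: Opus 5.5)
The plan is to handle each $1$-regular subgraph of the decomposition separately and then take the union. Let $\{M_1,\ldots,M_k\}$ be the given decomposition of $G$, where each $M_i$ is a $1$-regular subgraph of order $2t$, so $M_i$ consists of $t$ pairwise disjoint edges $e_{i,1},\ldots,e_{i,t}$. By Lemma~\ref{Gtool3}, it suffices to show that for each $i$ the multigraph $4M_i^{\bullet}$ admits an HOP $(C_2^{\langle t\rangle})$-decomposition. The union of these decompositions is then an HOP $(C_2^{\langle t\rangle})$-decomposition of $4G^{\bullet}$.

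Fix $i$ and an edge $e=e_{i,j}=xy$. In $4G^{\bullet}$ the four parallel copies of $e$ are one blue edge, one pink edge, and two black arcs $(x,y)$ and $(y,x)$. We split these four copies into two $2$-cycles: $D_j^{(1)}$, consisting of the blue and pink copies, and $D_j^{(2)}$, consisting of the two oppositely oriented black arcs. We then check Condition \textbf{(C1)} for each $2$-cycle, remembering that in a $2$-cycle the two edges are adjacent at both endpoints $x$ and $y$.
\begin{itemize}
\item For $D_j^{(1)}$, the adjacent edges are one blue and one pink, which \textbf{(C1)} allows at every vertex.
\item For $D_j^{(2)}$, traversing the cycle $x\to y\to x$ uses the arc $(x,y)$ and then the arc $(y,x)$. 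Both are oriented consistently with the traversal, so in the sense of Definition~\ref{def} they are ``oriented in the same way'' along the cycle.
\end{itemize}
This is the same observation underlying the $2$-cycles $E_j^{(1)},E_j^{(2)}$ in the proof of Lemma~\ref{lem:Newtool-00} (via Lemma~\ref{lem:Gtool4}), so it is already implicitly accepted in the paper.

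Next we set $T_{i,1}=D_1^{(1)}\cup\cdots\cup D_t^{(1)}$ and $T_{i,2}=D_1^{(2)}\cup\cdots\cup D_t^{(2)}$. The edges $e_{i,1},\ldots,e_{i,t}$ are pairwise vertex-disjoint, so each $T_{i,\ell}$ is a disjoint union of $t$ $2$-cycles, that is, a $(C_2^{\langle t\rangle})$-subgraph. Each satisfies \textbf{(C1)} because its cycles do. Moreover $T_{i,1}\oplus T_{i,2}=4M_i^{\bullet}$, since every one of the four copies of every edge of $M_i$ is used exactly once. This gives the required HOP $(C_2^{\langle t\rangle})$-decomposition of $4M_i^{\bullet}$, and Lemma~\ref{Gtool3} finishes the proof.

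The only delicate point is confirming that the black $2$-cycle satisfies \textbf{(C1)}, because the phrase ``oriented the same way'' has to be read along the cycle traversal. If that reading is disputed, an alternative is to pair each black arc with a coloured copy: blue with the arc $(y,x)$, which is oriented toward the blue edge at $x$, and pink with the arc $(x,y)$, oriented away from the pink edge. Either splitting works equally well for the rest of the argument. Alternatively, one could simply invoke Lemma~\ref{lem:Gtool4} on each single edge to obtain the two coloured $2$-cycles, exactly as in the proof of Lemma~\ref{lem:Newtool-00}.
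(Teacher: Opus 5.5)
Your main argument is correct and is essentially the paper's proof. You split the four copies of each edge into a blue--pink $2$-cycle and a directed black $2$-cycle, take the union over the $t$ vertex-disjoint edges of each $1$-regular subgraph, and finish with Lemma~\ref{Gtool3}.

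Drop the fallback, though. A $2$-cycle formed by the blue copy of $xy$ and the arc $(y,x)$ violates \textbf{(C1)}: the two edges are adjacent at both $x$ and $y$, and the arc points toward the blue edge at $x$ but away from it at $y$. Equivalently, under the lifting of Theorem~\ref{theo:Gtool1}, the edges $x^0y^0$ and $y^0x^1$ do not close up into an $I$-alternating $4$-cycle. So that splitting does not ``work equally well,'' and the black--black reading is the one the paper uses.
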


\begin{proof}
Let $\F = \{H_1, H_2, \ldots, H_s\}$ be a decomposition of $G$ where each $H_i$ is a disjoint union of $t$ copies of $K_2$.
It is easy to see that $4K_2^{\bullet}$ decomposes into two HOP $(C_2)$-subgraphs, namely a directed black $2$-cycle, and a $2$-cycle with a pink and a blue edge. 
Therefore, for all $i \in \{1, \ldots, s\}$, the multigraph $4H_i^{\bullet}$ admits an HOP $(C_2^{\langle t \rangle})$-decomposition, and by Lemma~{\ref{Gtool3}}, the multigraph $4G^{\bullet}$ admits an HOP $(C_2^{\langle t \rangle})$-decomposition.
\end{proof}

The tools introduced below show how  2-regular HOP subgraphs with specific symmetry generate an HOP decomposition of $4K_n^{\bullet}$.

\begin{lem}{\label{lem:2Kn-d}}
Let $m = m_1 + \ldots + m_t$, where $2 = m_1 \leq m_2 \leq \ldots \leq m_t$ are integers. Assume $n$ is odd and  $V(2K_n^{\circ})=\{x_i: i\in \ZZ_{n-1} \}\cup \{x_{\infty}\}$.  Let $\rho_{\circ}$ be the permutation on $E(2K_n^{\circ})$ that preserves the color of the edges, and  is induced by  the permutation  $\rho=(x_{\infty})(x_0 \ x_1\  x_2 \ \ldots \ x_{n-2})$.
Let $F_1, F_2, \ldots, F_s$ be $(C_{m_1}, C_{m_2},\ldots, C_{m_t})$-subgraphs of $2K_n^{\circ}$ with the following properties. 
\begin{description}
\item [(A1)] Every cycle in $F_1, F_2, \ldots, F_s$ of length at least $3$ contains an even number of pink edges;
\item [(A2)] $F_1, F_2, \ldots, F_s$ jointly contain exactly two edges from each orbit of $\langle \rho_{\circ} \rangle$ 
of length $n-1$, namely,  a pair of edges of the form $(e,\rho_{\circ}^{\frac{n-1}{2}}(e))$; and
\item [(A3)] $F_1, F_2, \ldots, F_s$ jointly contain exactly one edge from each orbit of $\langle \rho_{\circ} \rangle$ 
of length $ \frac{n-1}{2}$.
\end{description}
Then $4K_n^{\bullet}$ admits an HOP $(C_{m_1}, C_{m_2},\ldots, C_{m_t})$-decomposition. 
\end{lem}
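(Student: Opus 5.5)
The plan is to build a decomposition of $2K_n^{\circ}$ from the given subgraphs by letting $\rho_{\circ}$ act on them, and then to convert it into the required decomposition of $4K_n^{\bullet}$ using Lemma~\ref{lem:Gtool2}. Since $n$ is odd, $n-1$ is even and $\frac{n-1}{2}$ is an integer. I would take
$$\D=\{\rho_{\circ}^{j}(F_i) : 1\le i\le s,\ 0\le j\le \tfrac{n-3}{2}\},$$
that is, $\frac{n-1}{2}$ consecutive rotations of each $F_i$. The claim to establish is that $\D$ is a $(C_{m_1},\ldots,C_{m_t})$-decomposition of $2K_n^{\circ}$ in which every cycle of length at least $3$ has an even number of pink edges. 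Lemma~\ref{lem:Gtool2} then gives an HOP $(C_{m_1},\ldots,C_{m_t})$-decomposition of $4K_n^{\bullet}$.

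First, the members of $\D$ have the right shape. The permutation $\rho$ is an automorphism of $K_n$, and $\rho_{\circ}$ acts on the parallel pairs preserving colour. Hence each $\rho_{\circ}^j(F_i)$ is again a $(C_{m_1},\ldots,C_{m_t})$-subgraph of $2K_n^{\circ}$. Its cycles have the same lengths and the same numbers of pink edges as those of $F_i$, so (A1) carries over to every member of $\D$. The $2$-cycles (from $m_1=2$) are a pink and a black copy of one edge; they are unconstrained by the parity condition, so nothing more is needed for them.

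Next, I would describe the orbits of $\langle\rho_{\circ}\rangle$ on $E(2K_n^{\circ})$. For each colour, the edges $x_ix_\infty$ form a single orbit of length $n-1$. For each colour and each non-diameter difference $d\in\{1,\ldots,\frac{n-3}{2}\}$, the edges of difference $d$ form an orbit of length $n-1$. For each colour, the diameter edges $x_ix_{i+\frac{n-1}{2}}$ form an orbit of length $\frac{n-1}{2}$, because $\rho^{\frac{n-1}{2}}$ fixes each such edge setwise. These orbits partition the edge set.

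Now count coverage orbit by orbit. Take an orbit $O=\{\rho_{\circ}^k(e): k\in\ZZ_{n-1}\}$ of length $n-1$. By (A2), the $F_i$ jointly meet $O$ exactly in $e$ and $\rho_{\circ}^{\frac{n-1}{2}}(e)$. As $j$ runs over $0,\ldots,\frac{n-3}{2}$, the exponents $j$ and $j+\frac{n-1}{2}$ together run over every residue mod $n-1$ exactly once. Hence each edge of $O$ lies in exactly one member of $\D$. Similarly, for an orbit of length $\frac{n-1}{2}$, (A3) provides exactly one representative, and its $\frac{n-1}{2}$ images are exactly the orbit, each hit once. Since the orbits partition $E(2K_n^{\circ})$, $\D$ is a decomposition.

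The main point needing care is this bookkeeping. In particular, I must check that the two representatives in (A2) may lie in different $F_i$'s, or in the same one, without double counting. This holds because the counting is over the multiset of images of all edges of $F_1,\ldots,F_s$ jointly. I also need the orbit lengths to be exactly as stated, which rests on $x_\infty$ being fixed and the pink and black copies never being exchanged. Once this is in place, Lemma~\ref{lem:Gtool2} finishes the proof.
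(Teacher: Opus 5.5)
Your proposal is correct and follows essentially the same route as the paper: you rotate each $F_i$ by $\rho_{\circ}^j$ for $j=0,\ldots,\frac{n-3}{2}$, check orbit by orbit that every edge of $2K_n^{\circ}$ is covered, and finish with (A1) and Lemma~\ref{lem:Gtool2}. The only cosmetic difference is how double coverage is ruled out: the paper uses a global edge count ($2n$ edges in the $F_i$ times $\frac{n-1}{2}$ rotations equals $n(n-1)$), whereas you get exactly-once coverage directly because the exponents $j$ and $j+\frac{n-1}{2}$ together run over each residue of $\ZZ_{n-1}$ exactly once.
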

\begin{proof}
Observe that the group $\langle \rho_{\circ} \rangle$ has the following orbits on the edge set of $2K_n^{\circ}$:
\begin{itemize}
\item for each $d\in \{1,2,\ldots, \frac{ n-3}{2}\}$, we have a pink and a black orbit $\{x_ix_{i+d}: i\in \ZZ_{n-1} \}$;
\item a pink and a black orbit $\{x_ix_{i+{\frac{n-1}{2}}}: i=0,1,\ldots, \frac{n-3}{2} \}$; and
\item a pink and a black orbit  $\{x_ix_{\infty}: i\in \ZZ_{n-1} \}$. 
\end{itemize}
Note that the size of  each orbit is $n-1$, except for the pink and the black orbit consisting of edges of difference $\frac{n-1}{2}$; these two orbits have length $\frac{n-1}{2}$. 

Assume $O$ is an orbit of  $\langle \rho_{\circ} \rangle$. If the length of $O$ is $\frac{n-1}{2}$, then by {\bf (A3)} clearly $\{\rho_{\circ}^i(F_1), \ldots, \rho_{\circ}^i(F_s) : i=0,1,\ldots,\frac{n-3}{2}\}$ contains all edges of $O$. If  the length of $O$ is $n-1$, then for any $e\in O$, by {\bf (A2)}, we have that $e \in E(F_1\cup F_2\cup \ldots\cup F_s)$ implies $\rho_{\circ}^{\frac{n-1}{2}}(e)\in E(F_1\cup F_2\cup \ldots\cup F_s)$. Hence, $\{\rho_{\circ}^i(F_1),\ldots, \rho_{\circ}^i(F_s) : i=0,1,\ldots,\frac{n-3}{2}\}$ contains all edges of $O$.

Note that by {\bf (A2)} and {\bf (A3)} $|E(F_1\cup F_2\cup \ldots\cup F_s)|=2n$, and all edges in each orbit are covered  by $\{\rho_{\circ}^i(F_1),\ldots, \rho_{\circ}^i(F_s) : i=0,1,\ldots,\frac{n-3}{2}\}$. Hence, this set covers $n(n-1)$ edges, which is exactly the number of edges in $2K_n^{\circ}$, so no edge is covered twice. 
 Thus, $\D=\{\rho_{\circ}^i(F_1),\ldots, \rho_{\circ}^i(F_s) : i=0,1,\ldots,\frac{n-3}{2}\}$ forms a $(C_{m_1}, C_{m_2},\ldots, C_{m_t})$-decomposition  of $2K_n^{\circ}$. By Condition {\bf (A1)} and Lemma \ref{lem:Gtool2}, we see that $4K_n^{\bullet}$ admits an HOP $(C_{m_1}, C_{m_2},\ldots, C_{m_t})$-decomposition. 
\end{proof}

\begin{lem}{\label{lem:4Kn-d}}
Let $m = m_1 + \ldots + m_t$, where $2 = m_1 \leq m_2 \leq \ldots \leq m_t$ are integers. Assume $n$ is odd and $V(4K_n^{\bullet}) = \{x_i : i \in \mathbb{Z}_{n-1}\} \cup \{x_{\infty}\}$. Let $\rho_{\bullet}$ be the permutation on $E(4K_n^{\bullet})$ that preserves the color and orientation of the edges, and is induced by the permutation $\rho = (x_{\infty})(x_0 \ x_1 \ x_2 \ \ldots \ x_{n-2})$.
Let $F_1, F_2, \ldots, F_s$ be $(C_2, C_{m_2},\ldots, C_{m_t})$-subgraphs of $4K_n^{\bullet}$ with the following properties. 
\begin{description}
\item [(B1)] Every cycle in $F_1, F_2, \ldots, F_s$ satisfies Condition {\bf (C1)} of Definition {\rm{\ref{def}}};
\item [(B2)] $F_1, F_2, \ldots, F_s$ jointly contain exactly two edges of difference $\frac{n-1}{2}$, which appear in a $2$-cycle; and
\item [(B3)] $F_1, F_2, \ldots, F_s$ jointly contain exactly one edge from each of the orbits of $\langle \rho_{\bullet} \rangle$ corresponding to differences $ 1, 2 , \ldots,  \frac{ n-3}{2},  \infty$. 
\end{description}
Then there exists an HOP $(C_{2}, C_{m_2},\ldots, C_{m_t})$-decomposition of $4K_n^{\bullet}$. 
\end{lem}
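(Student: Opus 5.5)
The plan is to develop the given subgraphs $F_1,\dots,F_s$ under the full cyclic group $\langle\rho_\bullet\rangle$ of order $n-1$. Developing them as they are would cover the edges of difference $\frac{n-1}{2}$ twice, so I will repair the $2$-cycle of difference $\frac{n-1}{2}$ in half of the translates. Throughout, write $h=\frac{n-1}{2}$.

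\emph{Step 1: the orbits.} I first list the orbits of $\langle\rho_\bullet\rangle$ on $E(4K_n^{\bullet})$. For each $d\in\{1,\dots,h-1\}$ there are four orbits, each of length $n-1$: a blue orbit, a pink orbit, the black arcs $(x_i,x_{i+d})$, and the black arcs $(x_{i+d},x_i)$. For $d=\infty$ there are likewise four orbits of length $n-1$: blue, pink, the arcs $(x_i,x_\infty)$, and the arcs $(x_\infty,x_i)$. For $d=h$, the blue and pink orbits have length $h$, while all black arcs of difference $h$ form a single orbit of length $n-1$. The latter holds because $\rho_\bullet^{h}$ maps $(x_i,x_{i+h})$ to $(x_{i+h},x_i)$.

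Consequently, by (B3), the family $\{\rho_\bullet^i(F_j): i\in\mathbb{Z}_{n-1},\ 1\le j\le s\}$ covers every edge of difference in $\{1,\dots,h-1,\infty\}$ exactly once.

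\emph{Step 2: the difference-$h$ edges.} By (B2), the two edges of difference $h$ form a $2$-cycle $D$ on some pair $\{x_a,x_{a+h}\}$, and $D$ is a component of some $F_j$. By (B1), Condition (C1) forces $D$ to be one of two types. Either it is the blue and pink copies of $x_ax_{a+h}$, or it is the directed black $2$-cycle $\{(x_a,x_{a+h}),(x_{a+h},x_a)\}$. In both cases $\rho_\bullet^{h}(D)=D$. Hence the developed family covers the edges of $D$ and their translates twice, and misses the complementary type entirely.

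Let $D'$ be the other $2$-cycle on $\{x_a,x_{a+h}\}$, that is, the complement of $D$ in $4\{x_ax_{a+h}\}^\bullet$. It also satisfies (C1). For $i\in\{h,\dots,n-2\}$, I replace $\rho_\bullet^i(F_j)$ by $\rho_\bullet^i\big((F_j-D)\cup D'\big)$. Since $D$ and $D'$ have the same vertex set and $D$ is a separate component, the result is still a $(C_2,C_{m_2},\dots,C_{m_t})$-subgraph satisfying (C1). This step is the main subtle point, since the naive development fails exactly here. After the change, for each $i\in\{0,\dots,h-1\}$, the pair $\{x_{a+i},x_{a+i+h}\}$ receives $\rho^i_\bullet(D)$ from index $i$ and $\rho^{i+h}_\bullet(D')=\rho^i_\bullet(D')$ from index $i+h$. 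Together these are exactly the four parallel edges between $x_{a+i}$ and $x_{a+i+h}$. As $i$ ranges over $0,\dots,h-1$, these pairs run through all $h$ pairs at difference $h$.

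\emph{Step 3: counting.} Finally I confirm the count. The $F_j$ jointly contain $2+4h=2n$ edges, so the $n-1$ modified translates contain $2n(n-1)=|E(4K_n^{\bullet})|$ edges. Every edge is covered by Steps 1 and 2, so each is covered exactly once. The resulting family is therefore an HOP $(C_2,C_{m_2},\dots,C_{m_t})$-decomposition of $4K_n^{\bullet}$. This mirrors the counting argument in the proof of Lemma~\ref{lem:2Kn-d}.
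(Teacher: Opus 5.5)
Your proposal is correct and follows essentially the same route as the paper: develop all $F_j$ under the full group $\langle\rho_\bullet\rangle$. For the subgraph containing the difference-$\frac{n-1}{2}$ $2$-cycle, use the blue/pink version for half of the translates and the directed black version for the other half. Your observation that (C1) forces that $2$-cycle to be one of these two types, and hence to be fixed by $\rho_\bullet^{\frac{n-1}{2}}$, is a slightly more careful justification of the paper's ``recolor'' step.
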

\begin{proof}
Since $\rho_{\bullet}$ is a permutation that preserves the color and orientation of the edges, the group $\langle \rho_{\bullet} \rangle$ has the following orbits on the edge set of $4K_n^{\bullet}$:
\begin{itemize}
\item for each $d\in \{1,2,\ldots, \frac{ n-3}{2}\}$, we have a pink and a blue orbit $\{x_ix_{i+d}: i\in \ZZ_{n-1} \}$;
\item for each $d\in \{1,2,\ldots, n-2\}$, we have a black orbit $\{(x_i, x_{i+d}): i\in \ZZ_{n-1} \}$;
\item a pink and a blue orbit $\{ x_ix_{i+\frac{ n-1}{2}}: i=0,1,\ldots,\frac{n-3}{2}\}$;
\item a pink and a blue orbit  $\{x_ix_{\infty}: i\in \ZZ_{n-1} \}$; and
\item black orbits $\{(x_{\infty}, x_{i}): i\in \ZZ_{n-1} \}$ and $\{(x_i, x_{\infty}): i\in \ZZ_{n-1} \}$.
\end{itemize}
Note that the size of  each orbit is $n-1$, except for the pink and the blue orbit consisting of edges of difference $\frac{ n-1}{2}$; these two orbits have length $\frac{ n-1}{2}$. 

Let $e$ be an edge of $4K_n^{\bullet}$ of difference $\frac{n-1}{2}$. Then  by {\bf (B2)}, it appears only in a $2$-cycle, say  $C$. We may assume that $F_s$ is the $(C_{2}, C_{m_2},\ldots, C_{m_t})$-subgraph that contains $C$. Recolor the edges of  $C$ pink and blue. It is clear that $\{\rho_{\bullet}^i(F_s) : i=0,1,\ldots,\frac{n-3}{2}\}$ covers a pink and a blue copy of all edges of difference $\frac{n-1}{2}$. Now, form $F_s'$ from $F_s$ by recoloring the edges of $C$ black (as opposite arcs). Observe that $\{\rho_{\bullet}^{\frac{n-1}{2}+i}(F_s') : i=0,1,\ldots,\frac{n-3}{2}\}$ covers the black copies of all edges of difference $\frac{n-1}{2}$. Hence, $\{\rho_{\bullet}^{i}(F_s), \rho_{\bullet}^{\frac{n-1}{2}+i}(F_s') : i=0,1,\ldots,\frac{n-3}{2}\}$ covers all four copies of all edges of difference $\frac{n-1}{2}$. Moreover, since $F_1, F_2, \ldots, F_s$ jointly contain exactly one edge from each of the orbits, except for the orbits that correspond to difference $\frac{n-1}{2}$, we see that 
 $$\D=\big\{\rho_{\bullet}^i(F_s), \rho_{\bullet}^{\frac{n-1}{2}+i}(F_s'): i=0,1,\ldots,\textstyle{\frac{n-3}{2}} \big\}\cup\big\{\rho_{\bullet}^i(F_1), \rho_{\bullet}^i(F_2),\ldots, \rho_{\bullet}^i(F_{s-1}) : i\in \ZZ_{n-1}\big\}$$
 forms a $(C_{2}, C_{m_2},\ldots, C_{m_t})$-decomposition  for $4K_n^{\bullet}$. 
Since every cycle in $F_1, F_2, \ldots, F_s$ satisfies Condition {\bf (C1)} of Definition {\rm{\ref{def}}}, and $\rho_{\bullet}$ preserves the edge colors (and orientations),  decomposition $\D$ is  an HOP $(C_{2}, C_{m_2},\ldots, C_{m_t})$-decomposition of  $4K_n^{\bullet}$.
\end{proof}


The next lemma is proved similarly to Lemmas~\ref{lem:2Kn-d} and \ref{lem:4Kn-d}, so we leave the proof to the reader.
%
\begin{lem}{\label{lem:4Kn-e}}
Let $m = m_1 + \ldots + m_t$, where $2 = m_1 \leq m_2 \leq \ldots \leq m_t$ are integers. Assume $n$ is odd and $V(4K_n^{\bullet})=\{x_i: i\in \ZZ_{n-1} \}\cup \{x_{\infty}\}$.  Let $\rho_{\bullet}$ be the permutation on $E(4K_n^{\bullet})$ that preserves the color (and orientation) of the edges, and is induced by  the permutation  $\rho=(x_{\infty})(x_0 \ x_1\  x_2 \ \ldots \ x_{n-2})$. 
Let $F_1, F_2, \ldots, F_s$ be $(C_{m_1}, C_{m_2},\ldots, C_{m_t})$-subgraphs of $4K_n^{\bullet}$ with the following properties. 
\begin{description}
\item [(D1)] Every cycle in $F_1, F_2, \ldots, F_s$ satisfies Condition {\bf (C1)} of Definition {\rm{\ref{def}}};
\item[(D2)] $F_1, F_2, \ldots, F_s$ jointly contain exactly two edges from each orbit of $\langle \rho_{\bullet} \rangle$ 
of length $n-1$, namely, a pair of edges of the form $(e,\rho_{\bullet}^{\frac{n-1}{2}}(e))$; and
\item [(D3)] $F_1, F_2, \ldots, F_s$ jointly contain exactly one edge from each orbit of $\langle \rho_{\bullet} \rangle$ 
of length $ \frac{n-1}{2}$.
\end{description}
Then $\D=\{\rho_{\bullet}^i(F_1), \rho_{\bullet}^i(F_2),\ldots, \rho_{\bullet}^i(F_s): i=0,1,\ldots,\frac{n-3}{2}\}$ is an HOP $(C_{m_1}, C_{m_2},\ldots, C_{m_t})$-decomposition of $4K_n^{\bullet}$. 
\end{lem}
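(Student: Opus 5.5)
The plan mirrors the proofs of Lemmas~\ref{lem:2Kn-d} and \ref{lem:4Kn-e}, working directly in $4K_n^{\bullet}$ rather than in $2K_n^{\circ}$. First I would list the orbits of $\langle \rho_{\bullet} \rangle$ on $E(4K_n^{\bullet})$, as in the proof of Lemma~\ref{lem:4Kn-d}. For each $d\in\{1,\ldots,\frac{n-3}{2}\}$ there is a pink and a blue orbit of undirected edges of difference $d$, each of length $n-1$. For each $d\in\{1,\ldots,n-2\}$ with $d\neq \frac{n-1}{2}$ there is a black arc orbit $\{(x_i,x_{i+d})\}$ of length $n-1$. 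There is a pink and a blue orbit of infinity edges, and two black infinity orbits (arcs toward and away from $x_\infty$), all of length $n-1$. Finally, there are the pink and blue orbits of difference $\frac{n-1}{2}$, of length $\frac{n-1}{2}$.

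The key point concerns the black arcs of difference $\frac{n-1}{2}$. The set $\{(x_i,x_{i+\frac{n-1}{2}}) : i\in\ZZ_{n-1}\}$ is a single orbit of length $n-1$, because $\rho^{\frac{n-1}{2}}$ maps the arc $(x_i,x_{i+\frac{n-1}{2}})$ to the reversed arc $(x_{i+\frac{n-1}{2}},x_i)$, not to itself. So the black diameter arcs form one orbit of length $n-1$, and (D2) applies to it.

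Next comes the covering count. For an orbit $O$ of length $\frac{n-1}{2}$, (D3) supplies one edge $e\in O$. Its images $\rho_{\bullet}^i(e)$ for $i=0,\ldots,\frac{n-3}{2}$ are distinct and exhaust $O$. For an orbit $O$ of length $n-1$, (D2) supplies $e$ and $\rho_{\bullet}^{\frac{n-1}{2}}(e)$. As $i$ runs over $0,\ldots,\frac{n-3}{2}$, the edges $\rho_{\bullet}^i(e)$ cover the first half of the orbit and $\rho_{\bullet}^{\frac{n-1}{2}+i}(e)$ cover the second half, so all of $O$ is covered exactly once.

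It then remains to check that the family $\D$ is a decomposition. Each $F_j$ is a $(C_{m_1},\ldots,C_{m_t})$-subgraph of $4K_n^{\bullet}$, and each $\rho_{\bullet}^i$ is an automorphism preserving colours and orientations. Hence every $\rho_{\bullet}^i(F_j)$ is again such a subgraph, and the cycles of the $F_j$ cannot themselves contain a repeated edge. The orbits partition the edge set and every edge is covered exactly once, so $\D$ partitions $E(4K_n^{\bullet})$. As a sanity check, the total count of covered edges is $2n(n-1)$, matching $|E(4K_n^{\bullet})|$.

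Finally, (C1) is a local condition on consecutive edges that depends only on colours and on orientation relative to the shared vertex. Since $\rho_{\bullet}$ preserves both, (D1) transfers to every member of $\D$, making $\D$ HOP. I expect the main obstacle to be the careful handling of the black diameter arcs, namely confirming that they form a single orbit of length $n-1$ so that (D2) rather than (D3) governs them. The remaining steps are routine bookkeeping.
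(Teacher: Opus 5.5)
Your proof is correct and is essentially the argument the paper intends; the paper leaves this proof to the reader, modelled on Lemmas~\ref{lem:2Kn-d} and \ref{lem:4Kn-d}. Like those proofs, you list the orbits of $\langle\rho_{\bullet}\rangle$, use (D2) and (D3) to show that the $\frac{n-1}{2}$ translates cover each orbit exactly once, and transfer (C1) because $\rho_{\bullet}$ preserves colours and orientations; your observation that the black arcs of difference $\frac{n-1}{2}$ form a single orbit of length $n-1$ is exactly what makes this bookkeeping work. (Small slip: your first sentence cites Lemma~\ref{lem:4Kn-e}, the statement itself, where you mean Lemma~\ref{lem:4Kn-d}.)
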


\section{The Generalized HOP with Two Round Tables}{\label{sec:5}}

To establish our results for the generalized HOP with two round tables, by Theorem~\ref{theo:Gtool1}, it suffices to find an HOP $(C_{m_1}, C_{m_2})$-decomposition of $4K_n^{\bullet}$. A significant portion of the proof is dedicated to the case where one of the cycles has length 2. In cases where both cycles have lengths of at least 3, we use the existing results in Theorem~\ref{theo:(Cm1,Cm2)-dec}. We first prove in Lemmas~\ref{lem:Newtool-1b} and~\ref{lem:Newtool-T2} that there exists an HOP $(C_2, C_m)$-decomposition of $4K_n^{\bullet}$ whenever $n \equiv 1 \pmod{2(m+2)}$ or $n \equiv m+2 \pmod{2(m+2)}$. These auxiliary results are then used to prove our first result, Theorem~\ref{theo-new-1}.

\subsection{HOP $(C_2, C_m)$-decompositions of $4K_n^{\bullet}$ when  $n \equiv 1 \pmod{2(m+2)}$}

\begin{lem}{\label{lem:Newtool-1b}}
Let $m\geq 3$, $k\geq1$ be integers, and let $n=2(m+2)k+1$. Then there exists an HOP $(C_2, C_m)$-decomposition of $4K_n^{\bullet}$. 
\end{lem}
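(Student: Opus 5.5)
The plan is to reduce the statement to a decomposition of the simple graph $K_n$ and then apply Lemma~\ref{lem:Newtool-00} with $\alpha=1$. In that lemma, take $H$ to be a single $m$-cycle and $H'$ to be the edge-disjoint union of two $1$-regular subgraphs of order $2$, that is, two distinct edges $e,e'$ vertex-disjoint from $H$. The edges $e,e'$ may share a vertex, since in the proof of Lemma~\ref{lem:Newtool-00} each $T_{i_j}$ uses only one of them. Lemma~\ref{lem:Newtool-00} then turns a decomposition of $K_n$ into copies of $H\mathbin{\dot{\cup}}H'$ into an HOP $(C_2,C_m)$-decomposition of $4K_n^{\bullet}$.

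So it suffices to decompose $K_n$, with $n=2(m+2)k+1$, into subgraphs each consisting of an $m$-cycle together with two further edges disjoint from it. Such a subgraph has $m+2$ edges, and $|E(K_n)|=\frac{n(n-1)}{2}=(m+2)kn$, so the edge count is consistent: we need exactly $kn$ such subgraphs. I will build them cyclically. View $K_n$ as $\mathrm{Circ}(n;\pm S)$ with $S=\{1,\dots,\frac{n-1}{2}\}=\{1,\dots,(m+2)k\}$. Since $n$ is odd there is no diameter difference, so every difference class is a full orbit of length $n$ under $x_i\mapsto x_{i+1}$. Partition $S$ into $k$ blocks $S_j=\{(m+2)(j-1)+1,\dots,(m+2)j\}$, for $j=1,\dots,k$. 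For each block, construct one base graph $B_j$ whose $m+2$ edges realize each difference of $S_j$ exactly once: an $m$-cycle using $m$ of the differences, plus two edges of the remaining two differences placed on vertices avoiding the cycle. Developing each $B_j$ modulo $n$ then gives the required decomposition of $K_n$, since every edge of each difference is covered exactly once.

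The main obstacle is the base cycle. We need an $m$-cycle in $\mathbb{Z}_n$ whose $m$ edge differences are $m$ distinct elements of $S_j$, which requires a choice of signs $\pm d$ summing to $0$ in $\mathbb{Z}_n$, and whose vertices are distinct. My first attempt is a zigzag. Start from a path of the form $0, a, 1, a-1, 2, \dots$, whose consecutive differences are $a, a-1, a-2, \dots$ with alternating signs, shifted so that all differences fall into $S_j$. Close it with one extra difference, and choose which two differences of $S_j$ to omit so that the closing edge has a difference in $S_j$ not used along the path. This needs case analysis on $m \bmod 4$, since the alternating sum of consecutive integers depends on that residue, and possibly a small swap of two adjacent differences to fix the sign sum. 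The two omitted differences $d,d'$ are then realized by edges, for instance $\{u,u+d\}$ and $\{u,u+d'\}$, with $u$ chosen in the large gap outside the interval spanned by the cycle. Because $n>2(m+2)$ and the zigzag occupies an interval of length about $(m+2)j$, this gap should be available; verifying it carefully, especially for the last block $j=k$, is where I expect most of the work.
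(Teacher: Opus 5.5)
Your reduction is exactly the paper's. You take $k$ base graphs, each an $m$-cycle plus two further edges disjoint from it, jointly realizing every difference in $\{1,\dots,(m+2)k\}$ once. You develop them under $x_i\mapsto x_{i+1}$ and apply Lemma~\ref{lem:Newtool-00} with $\alpha=1$, and you are right that the two extra edges may share a vertex.

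The gap is giving the $j$th base graph the consecutive block $S_j=\{a+1,\dots,a+m+2\}$, where $a=(m+2)(j-1)$. Suppose an $m$-cycle in $\mathbb{Z}_n$ traverses differences $d_1,\dots,d_m\in S_j$ with signs $\epsilon_1,\dots,\epsilon_m$. It closes only if $\sum_i\epsilon_i d_i=ca+R\equiv 0\pmod n$, where $c=\sum_i\epsilon_i$ and $|R|<m(m+2)$.

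For even $m$ you can take $c=0$, and a block-by-block zigzag looks feasible. For odd $m$, however, $c$ is odd with $|c|\le m$. Once $k$ is large compared with $m$, some blocks admit no choice of signs, order, or omitted pair. This is an arithmetic obstruction; no case split on $m \bmod 4$ and no swap of adjacent differences can repair it.

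Concretely, take $m=3$, $k=3$, $n=31$. A triangle in $\mathbb{Z}_{31}$ with differences at most $15$ requires (after relabeling) $d_1+d_2=d_3$ or $d_1+d_2+d_3=31$. For $S_2=\{6,\dots,10\}$ both fail, since $6+7>10$ and $8+9+10<31$. For $S_3=\{11,\dots,15\}$ both fail, since $11+12>15$ and $11+12+13>31$.

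What is missing is a non-block distribution of the differences among the $k$ base cycles. The paper interleaves them. Its cycle $C_i$ starts $x_{1-i}\,x_i\,x_{-i}\,x_{2k+i}\,x_{-(2k+i)}\cdots$, so it uses $2i-1,2i,2k+2i,4k+2i,\dots$, rising in steps of $2k$. It then follows a run descending in steps of $2k$ (such as $\dots,6k-2i+1,4k-2i+1$), joined by two connecting edges of large difference.

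Thus each $C_i$ mixes small and large differences from the whole range, which is what lets a cycle of odd length close in $\mathbb{Z}_n$. The $2k$ differences not used by $C_1,\dots,C_k$ are assigned to the edges $E_j^{(1)},E_j^{(2)}$, placed in vertex intervals that no $C_i$ touches. The paper gives separate constructions for $m\equiv 0,1,2,3 \pmod 4$ and for small $m$. Your plan needs a comparable explicit assignment for odd $m$ before the zigzag and gap verification can start.
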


\begin{proof} 
Let $V(K_n)=\{x_i: i\in \ZZ_n\}$, and let  $\rho=(x_0\ x_1\ \ldots \ x_{n-1})$ be a permutation on $V(K_n)$.
By the given labeling, each edge in $E(K_n)$ is of the form $x_ix_{i+d}$, where $d\in \{1,2,\ldots, (m+2)k\}$. Therefore, the group $\langle \rho \rangle$ acting on $E(K_n)$ has $(m+2)k$ orbits, each of size $n$.

Our goal is to construct $k$ subgraphs of $K_n$, namely $F_1, F_2, \ldots, F_k$, where each $F_i$ is a disjoint union of a $(C_m)$-subgraph and two edge-disjoint $1$-regular subgraphs of order $2$, and where $F_1, F_2, \ldots, F_k$ jointly contain exactly one edge of each difference in $\{1, 2, \ldots, (m+2)k\}$. Therefore, $\D = \{\rho^i(F_1), \rho^i(F_2), \ldots, \rho^i(F_k) : i \in \ZZ_n\}$ is a decomposition of $K_n$ that satisfies the condition in Lemma~\ref{lem:Newtool-00} (for $t = 1$ and $\alpha = 1$); hence, by Lemma~\ref{lem:Newtool-00}, the multigraph $4K_n^{\bullet}$ admits an HOP $(C_2, C_m)$-decomposition.

We start by constructing $m$-cycles. First, we build two paths, $P$ and $Q$, and then form the cycle $C$ by concatenating these paths appropriately. The construction depends on $m \bmod 4$, so we consider four cases: $m \equiv 0, 1, 2,$ or $3 \pmod{4}$.

\noindent {\bf{Case 1:}}  {\boldmath{$m \equiv 0 \pmod{4}$.}}
For $i\in\{1,2,\ldots,k\}$, define  walks $P_i$ and $Q_i$ as follows:
\begin{eqnarray*}
P_i &=& x_{1-i}\ x_i\ x_{-i}\ x_{2k+i}\ x_{-(2k+i)}\ x_{4k+i}\ x_{-(4k+i)} \ldots \\
&\ldots&   x_{\frac{m-12}{2}k+i}\ x_{-(\frac{m-12}{2}k+i)} \ x_{\frac{m-8}{2}k+i} \ x_{-(\frac{m-8}{2}k+i)}\ x_{\frac{m-4}{2}k+i}.
\\
Q_i &=& x_{-(\frac{m+4}{2}k+i)}\ \ x_{{\frac{m+4}{2}k+i}}\ \ x_{-(\frac{m+8}{2}k+i)}\ x_{\frac{m+8}{2}k+i}\ \ldots \\
&& \ldots  \ x_{-((m-2)k+i)}\ \ x_{(m-2)k+i} \ \ x_{-(mk+i)}\ \ x_{mk+i}. 
\end{eqnarray*}
Observe that $P_i$ and $Q_i$ are paths of length $\frac{m-2}{2}$.
 
Now, for $i\in\{1,2,\ldots,k\}$, we construct the following closed walk using paths $P_i$ and $Q_i$: 
$$C_i=P_i \ \textcolor{blue} {x_{\frac{m-4}{2}k+i}\ x_{-(\frac{m+4}{2}k+i)}}\  Q_i \ \textcolor{blue}{x_{mk+i} \ x_{1-i}}.$$
It can be seen that $C_i$ is a cycle of length $m$, and traverses edges of the following differences (in order): 
\begin{eqnarray*}
&2i-1,2i,2k+2i,4k+2i, 6k+2i, 8k+2i, \ldots,\\
&  (m-14)k+2i , (m-12)k+2i,(m-10)k+2i,(m-8)k+2i, (m-6)k+2i,  \\
& \textcolor{blue}{mk+2i}, mk-2i+1, (m-2)k-2i+1, (m-4)k-2i+1, \ldots, \\
&8k-2i+1, 6k-2i+1, 4k-2i+1, \textcolor{blue}{mk+2i-1}.
\end{eqnarray*}
Below, the differences in the sequences of  $C_1,C_2, \ldots, C_k$ are listed explicitly.

\begin{footnotesize}
\begin{eqnarray*}
C_1: &1, \ldots, (m-6)k+2, \textcolor{blue}{mk+2}, mk-1, (m-2)k-1,\ldots,4k-1, \textcolor{blue}{mk+1}\\
C_2: &3,\ldots,  (m-6)k+4, \textcolor{blue}{mk+4}, mk-3, (m-2)k-3,\ldots,4k-3, \textcolor{blue}{mk+3}\\
C_3: &5, \ldots, (m-6)k+6, \textcolor{blue}{mk+6}, mk-5, (m-2)k-5,\ldots,4k-5, \textcolor{blue}{mk+5}\\
&\vdots& \\
C_{k-2}: &2k-5,\ldots, (m-4)k-4, \textcolor{blue}{(m+2)k-4}, (m-2)k+5, (m-4)k+5,\ldots,2k+5, \textcolor{blue}{(m+2)k-5}\\
C_{k-1}: &2k-3, \ldots, (m-4)k-2, \textcolor{blue}{(m+2)k-2}, (m-2)k+3, (m-4)k+3,\ldots,2k+3, \textcolor{blue}{(m+2)k-3}\\
C_k: &2k-1,\ldots, (m-4)k, \textcolor{blue}{(m+2)k}, (m-2)k+1, (m-4)k+1,\ldots,2k+1, \textcolor{blue}{(m+2)k-1}
\end{eqnarray*}
\end{footnotesize}
Observe that $C_1, C_2, \ldots, C_k$ jointly contain exactly one edge of each difference in $\{1,2,3,\allowbreak \ldots ,(m+2)k\}$  except for
$(m-4)k+2j, \ {\rm { for }}\ j\in\{1,2,\ldots,2k\}.$
Note that we have $2k$ differences from $\{1,2,3,\ldots ,(m+2)k\}$ that are not covered by any $C_i$.  

First, assume \( m \geq 8 \). Define the following subgraphs of \( K_n \) (isomorphic to \( K_2 \)) that cover the leftover differences:
\begin{itemize}
  \item \( E_j^{(1)} = x_{-\left(\frac{m-6}{2}k + j\right)}\ x_{\frac{m-2}{2}k + j} \), for \( j \in \{1, 2, \ldots, k\} \);
  \item \( E_j^{(2)} = x_{-\left(\frac{m-4}{2}k + j\right)}\ x_{\frac{m}{2}k + j} \), for \( j \in \{1, 2, \ldots, k\} \).
\end{itemize}
Observe that none of the vertices in the intervals \( [x_{-(\frac{m-6}{2}k+1)},\, x_{-(\frac{m+4}{2}k)}] \) and \( [x_{\frac{m-2}{2}k+1},\, x_{\frac{m+4}{2}k}] \) are used in any \( C_i \), and that only vertices in these intervals are used to construct the subgraphs \( E_j^{(1)} \) and \( E_j^{(2)} \). Hence, each \( C_i \) is disjoint from \( E_j^{(1)} \) and \( E_j^{(2)} \), for all \( j \in \{1, 2, \ldots, k\} \).

For \( m = 4 \), the missing differences are \( 2j \), for \( j \in \{1, 2, \ldots, 2k\} \), which are covered by the following subgraphs:
\begin{itemize}
  \item \( E_j^{(1)} = x_{-(6k - j + 1)}\ x_{6k - j} \), for \( j \in \{1, 2, \ldots, k - 1\} \);
  \item \( E_k^{(1)} = x_{-k}\ x_{-3k} \); and
  \item \( E_j^{(2)} = x_{-(k + j)}\ x_{k + j} \), for \( j \in \{1,  2, \ldots, k\} \).
\end{itemize}


\noindent {\bf{Case 2:}} {\boldmath{$m\equiv 2\ ({\rm mod}\ 4)$.}}  For $i\in\{1,2,\ldots,k\}$, define  walks $P_i$ and $Q_i$ as follows:
\begin{eqnarray*}
P_i &=& x_{1-i}\ x_i\ x_{-i}\ x_{2k+i}\ x_{-(2k+i)}\ x_{4k+i}\ x_{-(4k+i)} \ldots \\
&\ldots&   x_{\frac{m-14}{2}k+i}\ x_{-(\frac{m-14}{2}k+i)} \ x_{\frac{m-10}{2}k+i}\ x_{-(\frac{m-10}{2}k+i)} \ x_{\frac{m-6}{2}k+i}\ x_{-(\frac{m-6}{2}k+i)}.
\\
Q_i &=& x_{\frac{m+2}{2}k+i}\ \ x_{-({\frac{m+6}{2}k+i})}\ \ x_{\frac{m+6}{2}k+i}\ \ldots \\
&& \ldots  \ x_{-((m-2)k+i)}\ \ x_{(m-2)k+i} \ \ x_{-(mk+i)}\ \ x_{mk+i}. 
\end{eqnarray*}
Observe that  $P_i$ and $Q_i$ are paths of length  $\frac{m-2}{2}$.
Now, for $i\in\{1,2,\ldots,k\}$, let: 
$$C_i=P_i \ \textcolor{blue}{x_{-(\frac{m-6}{2}k+i)}\  x_{\frac{m+2}{2}k+i}}\ Q_i \ \textcolor{blue}{x_{mk+i} \ x_{1-i}}.$$
It can be seen that $C_i$ is a cycle of length $m$ (see Figure~\ref{fig:starter1}), and traverses edges of the following differences (in order): 
\begin{eqnarray*}
&2i-1,2i,2k+2i,4k+2i, 6k+2i, 8k+2i, \ldots,\\
&  (m-14)k+2i , (m-12)k+2i,(m-10)k+2i,(m-8)k+2i, (m-6)k+2i,  \\
& \textcolor{blue}{(m-2)k+2i}, mk-2i+1, (m-2)k-2i+1, (m-4)k-2i+1, \ldots, \\
&8k-2i+1, 6k-2i+1, 4k-2i+1, \textcolor{blue}{mk+2i-1}
\end{eqnarray*}
The differences in \( C_1, C_2, \ldots, C_k \) are listed below.
\begin{footnotesize}
\begin{eqnarray*}
C_1: &1,2, \ldots, (m-6)k+2, \textcolor{blue}{(m-2)k+2}, mk-1, (m-2)k-1,\ldots,4k-1, \textcolor{blue}{mk+1}\\
C_2: &3,4,\ldots,  (m-6)k+4, \textcolor{blue}{(m-2)k+4}, mk-3, (m-2)k-3,\ldots,4k-3, \textcolor{blue}{mk+3}\\
C_3: &5,6, \ldots, (m-6)k+6, \textcolor{blue}{(m-2)k+6}, mk-5, (m-2)k-5,\ldots,4k-5, \textcolor{blue}{mk+5}\\
&\vdots& \\
C_{k-2}: &2k-5,2k-4,\ldots, (m-4)k-4, \textcolor{blue}{mk-4}, (m-2)k+5, (m-4)k+5,\ldots,2k+5, \textcolor{blue}{(m+2)k-5}\\
C_{k-1}: &2k-3,2k-2, \ldots, (m-4)k-2, \textcolor{blue}{mk-2}, (m-2)k+3, (m-4)k+3,\ldots,2k+3, \textcolor{blue}{(m+2)k-3}\\
C_k: &2k-1,2k,\ldots, (m-4)k, \textcolor{blue}{mk}, (m-2)k+1, (m-4)k+1,\ldots,2k+1, \textcolor{blue}{(m+2)k-1}
\end{eqnarray*}
\end{footnotesize}
Observe that $C_1, C_2, \ldots, C_k$ jointly contain exactly one edge of each difference in $\{1,2,3,\ldots ,\allowbreak (m+2)k\}$  except for the following differences:
\begin{enumerate}[(i)]
\item $(m-4)k+2j$, for $j\in\{1,2,\ldots,k\}$; and
\item $mk+2j$, for $j\in\{1,2,\ldots,k\}$.
\end{enumerate}
The above $2k$ leftover differences from the set $\{1, 2, 3, \ldots, (m+2)k\}$ are covered by the following subgraphs of $K_n$ (each isomorphic to $K_2$):
\begin{itemize}
  \item \( E_j^{(1)} = x_{-\left(\frac{m - 4}{2}k + j\right)}\ x_{\frac{m - 4}{2}k + j} \), for \( j \in \{1, 2, \ldots, k\} \);
  \item \( E_j^{(2)} = x_{-\left(\frac{m}{2}k + j\right)}\ x_{\frac{m}{2}k + j} \), for \( j \in \{1, 2, \ldots, k\} \).
\end{itemize}
Observe that none of the vertices in the intervals \( [x_{-(\frac{m-4}{2}k + 1)},\, x_{-(\frac{m+6}{2}k)}] \) and \( [x_{\frac{m-4}{2}k + 1},\, x_{\frac{m+2}{2}k}] \) are used in any \( C_i \), and that only vertices from these intervals are used to construct the subgraphs \( E_j^{(1)} \) and \( E_j^{(2)} \). Hence, each \( C_i \) is disjoint from all subgraphs \( E_j^{(1)} \) and \( E_j^{(2)} \), for \( j \in \{1, 2, \ldots, k\} \).

\begin{figure}[hbt!]
 \centering
    \includegraphics[width=0.75\linewidth]{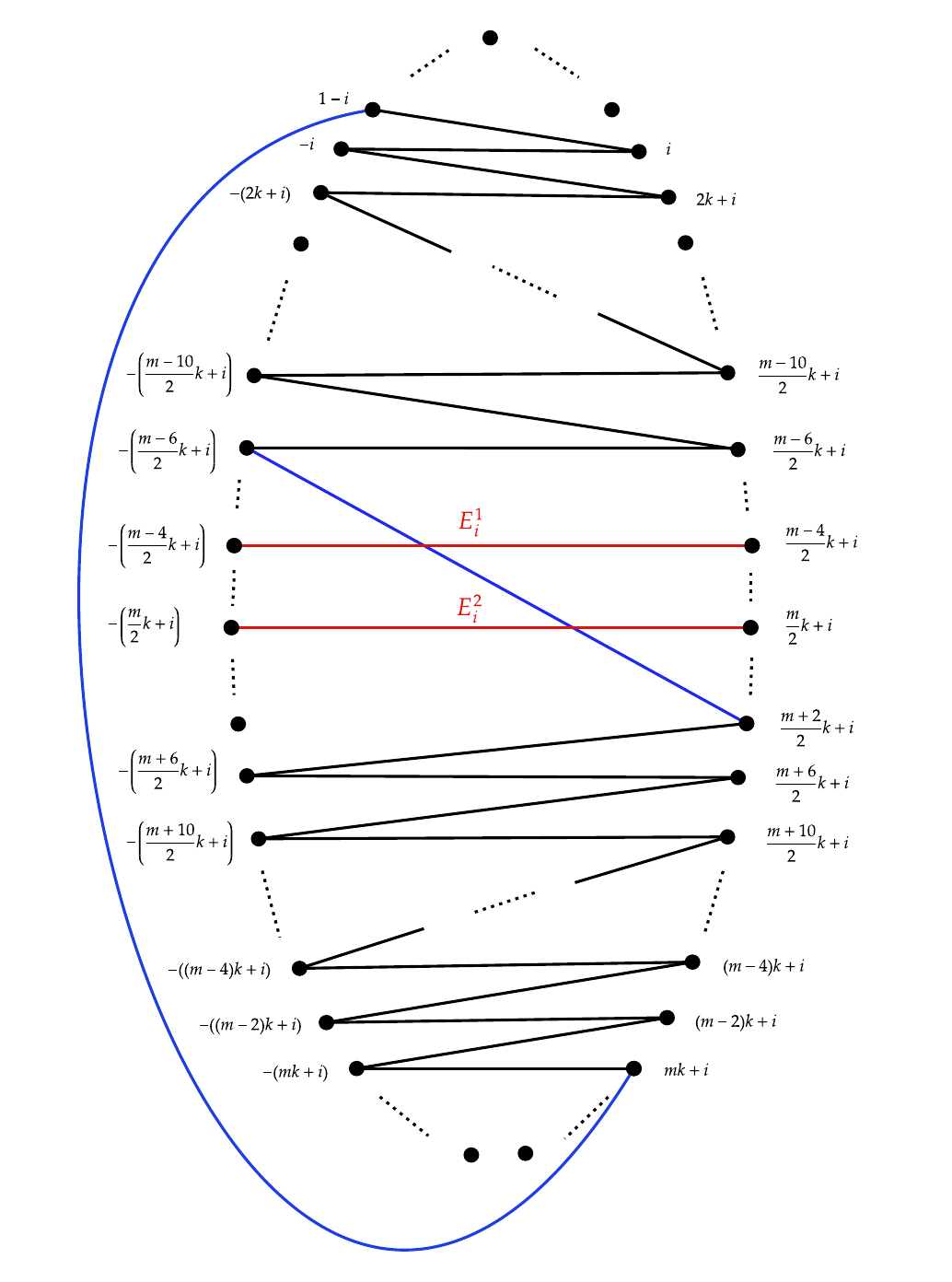}
    \caption{The $F_i$ subgraph in Lemma~\ref{lem:Newtool-1b} for $m\equiv 2\ ({\rm mod}\ 4)$.}
    \label{fig:starter1}
  \end{figure}

\noindent {\bf{Case 3:}}  {\boldmath{$m\equiv 1\ ({\rm mod}\ 4)$.}}  For $i\in\{1,2,\ldots,k\}$, define a walk $P_i$ as follows:
\begin{eqnarray*}
P_i &=& x_{1-i}\ x_i\ x_{-i}\ x_{2k+i}\ x_{-(2k+i)}\ x_{4k+i}\ x_{-(4k+i)} \ldots \\
&\ldots&   x_{\frac{m-9}{2}k+i}\ x_{-(\frac{m-9}{2}k+i)} \ x_{\frac{m-5}{2}k+i}\ x_{-(\frac{m-5}{2}k+i)} \ x_{\frac{m-1}{2}k+i}.
\end{eqnarray*}
We see that  $P_i$ is a path of length $\frac{m+1}{2}$. 

For \( m = 5 \), let \( Q_i = x_{4k + 3i - 1} \). Next, assume \( m \geq 9 \), and for \( i \in \{1, 2, \ldots, k\} \), define a walk \( Q_i \) as follows:
{\small{\begin{eqnarray*}
Q_i &=& x_{-({\frac{m+11}{2}k+i})}\ \ x_{\frac{m+3}{2}k+i}\ \ x_{-(\frac{m+15}{2}k+i)}\ \ x_{\frac{m+7}{2}k+i}\ \ldots \\
&& \ldots \ x_{-((m-1)k+i)}\ \ x_{(m-5)k+i}\ \ x_{-((m+1)k+i)} \ \ x_{(m-3)k+i}\ \ x_{(m-1)k+3i-1}. 
\end{eqnarray*}}}
Observe $Q_i$ is a path of length  $\frac{m-5}{2}$. 
Now, for $i\in\{1,2,\ldots,k\}$, let:
$$C_i=P_i \ \textcolor{blue}{x_{\frac{m-1}{2}k+i}\  x_{-(\frac{m+11}{2}k+i)}}\ Q_i \ \textcolor{blue}{x_{(m-1)k+3i-1} \ x_{1-i}}.$$
It can be seen that $C_i$ is a cycle of length $m$, and traverses edges of the following differences (in order): 
\begin{eqnarray*}
&2i-1,2i,2k+2i,4k+2i, 6k+2i, 8k+2i, \ldots,\\
&  (m-9)k+2i,(m-7)k+2i,(m-5)k+2i, (m-3)k+2i,  \\
& \textcolor{blue}{(m-1)k-2i+1}, (m-3)k-2i+1, (m-5)k-2i+1, (m-7)k-2i+1, \ldots, \\
&10k-2i+1, 8k-2i+1, 6k-2i+1, 2k+2i-1, \textcolor{blue}{d}.
\end{eqnarray*}
Note that for $i\in\{1,2,\ldots,\floor{\frac{3k+2}{4}}\}$, the difference \textcolor{blue}{$d=(m-1)k+4i-2$}, and it is even; for $i\in\{\floor{\frac{3k+2}{4}}+1,\ldots,k\}$, the difference \textcolor{blue}{$d=(m+1)k+4(k-i)+3$}, and it is odd. 
Moreover, for all $i\in\{1,2,\ldots,k\}$,  the difference \textcolor{blue}{$(m-1)k-2i+1$} is odd.   
The explicit differences in  $C_1,C_2, \ldots, C_k$ are listed below.

{\footnotesize{
\begin{eqnarray*}
C_1: &1,2, \ldots, (m-3)k+2, \textcolor{blue}{(m-1)k-1}, (m-3)k-1, \ldots,2k+1, \textcolor{blue}{(m-1)k+2}\\
C_2: &3,4,\ldots,  (m-3)k+4, \textcolor{blue}{(m-1)k-3}, (m-3)k-3, \ldots,2k+3, \textcolor{blue}{(m-1)k+6}\\
C_3: &5,6, \ldots, (m-3)k+6, \textcolor{blue}{(m-1)k-5}, (m-3)k-5, \ldots,2k+5, \textcolor{blue}{(m-1)k+10}\\
&\vdots& \\
C_{k-2}: &2k-5,2k-4,\ldots, (m-1)k-4, \textcolor{blue}{(m-3)k+5}, (m-5)k+5, \ldots,4k-5, \textcolor{blue}{(m+1)k+11}\\
C_{k-1}: &2k-3,2k-2, \ldots, (m-1)k-2, \textcolor{blue}{(m-3)k+3}, (m-5)k+3, \ldots,4k-3, \textcolor{blue}{(m+1)k+7}\\
C_k: &2k-1,2k,\ldots, (m-1)k, \textcolor{blue}{(m-3)k+1}, (m-5)k+1, \ldots,4k-1, \textcolor{blue}{(m+1)k+3}
\end{eqnarray*}}}
Observe that $C_1, C_2, \ldots, C_k$ jointly contain exactly one edge of each difference in $\{1,2,3,\ldots , \allowbreak (m+2)k\}$  except for the following differences:
\begin{enumerate}[(i)]
\item $(m-1)k+2j-1$, for $j\in\{1,2,\ldots,k\}$;
\item $(m-1)k+4j$, for $j\in\{1,2,\ldots,\floor{\frac{3k}{4}}\}$; and
\item $(m+1)k+4(k-i)+1$, for $j\in\{\floor{\frac{3k}{4}}+1,\ldots,k\}$.
\end{enumerate}
The above differences are covered by the following subgraphs:
\begin{itemize}
\item \( E_j^{(1)} = x_{-\left(\frac{m+1}{2}k + j - 1\right)}\ x_{\left(\frac{m-3}{2}k + j\right)} \), for \( j \in \{1, 2, \ldots, k\} \);
\item \( E_j^{(2)} = x_{-\left(\frac{m-3}{2}k + 3j\right)}\ x_{\left(\frac{m+1}{2}k + j\right)} \), for \( j \in \{1, 2, \ldots, k\} \).
\end{itemize}
Note that the edge in $E_j^{(1)}$ is of difference  $(m-1)k+2j-1$. The edge in $E_j^{(2)}$, for $j\in\{1,2,\ldots,\floor{\frac{3k}{4}}\}$, is of difference  $(m-1)k+4j$, and for $j\in\{\floor{\frac{3k}{4}}+1,\ldots,k\}$, it is of difference $(m+1)k+4(k-i)+1$.

Observe that none of the vertices in the intervals 
\([x_{-(\frac{m-3}{2}k+1)}, x_{-(\frac{m+11}{2}k)}]\), 
\([x_{\frac{m-3}{2}k+1}, x_{\frac{m-1}{2}k}]\), and 
\([x_{\frac{m+1}{2}k+1}, x_{\frac{m+3}{2}k}]\) 
are used in any \(C_i\), and only vertices from these intervals are used to construct the subgraphs \(E_j^{(1)}\) and \(E_j^{(2)}\). 
Hence, each \(C_i\) is disjoint from all subgraphs \(E_j^{(1)}\) and \(E_j^{(2)}\), for \(j \in \{1,2,\ldots,k\}\).

\noindent {\bf{Case 4:}} {\boldmath{$m\equiv 3\ ({\rm mod}\ 4)$.}}
For $i\in\{1,2,\ldots,k\}$, define a walk $P_i$ as follows:
\begin{eqnarray*}
P_i &=& x_{1-i}\ x_i\ x_{-i}\ x_{2k+i}\ x_{-(2k+i)}\ x_{4k+i}\ x_{-(4k+i)} \ldots \\
&\ldots&   x_{\frac{m-11}{2}k+i}\ x_{-(\frac{m-11}{2}k+i)} \ x_{\frac{m-7}{2}k+i}\ x_{-(\frac{m-7}{2}k+i)} \ x_{\frac{m-3}{2}k+i}.
\end{eqnarray*}
Observe that $P_i$ is a path of length $\frac{m-1}{2}$. 

For $m=3$, let $Q_i=x_{2k+3i-1}$. Now, assume $m\geq 7$, and for $i\in\{1,2,\ldots,k\}$, define a walk $Q_i$ as follows:

{\small{\begin{eqnarray*}
Q_i &=& x_{-({\frac{m+9}{2}k+i})}\ \ x_{\frac{m+1}{2}k+i}\ \ x_{-(\frac{m+13}{2}k+i)}\ \ x_{\frac{m+5}{2}k+i}\ \ldots \\
&& \ldots \ x_{-((m-1)k+i)}\ \ x_{(m-5)k+i}\ \ x_{-((m+1)k+i)} \ \ x_{(m-3)k+i}\ \ x_{(m-1)k+3i-1}. 
\end{eqnarray*}}}
Observe $Q_i$ is a path of length $\frac{m-3}{2}$.
Now, for $i\in\{1,2,\ldots,k\}$, let: 
$$C_i=P_i \ \textcolor{blue}{x_{\frac{m-3}{2}k+i}\  x_{-(\frac{m+9}{2}k+i)}}\ Q_i \ \textcolor{blue}{x_{(m-1)k+3i-1} \ x_{1-i}}.$$
Observe $C_i$ is a cycle of length $m$, and traverses edges of the following differences (in order): 
\begin{eqnarray*}
&2i-1,2i,2k+2i,4k+2i, 6k+2i, 8k+2i, \ldots,\\
&  (m-11)k+2i , (m-9)k+2i,(m-7)k+2i,(m-5)k+2i,  \\
& \textcolor{blue}{(m+1)k-2i+1}, (m-1)k-2i+1, (m-3)k-2i+1, (m-5)k-2i+1,  \ldots, \\
&10k-2i+1, 8k-2i+1, 6k-2i+1, 2k+2i-1, \textcolor{blue}{d}
\end{eqnarray*}

Note that for $i\in\{1,2,\ldots,\floor{\frac{3k+2}{4}}\}$, the difference \textcolor{blue}{$d=(m-1)k+4i-2$}, and it is even; for $i\in\{\floor{\frac{3k+2}{4}}+1,\ldots,k\}$, the difference \textcolor{blue}{$d=(m+1)k+4(k-i)+3$}, and it is odd. 
Moreover, for all $i\in\{1,2,\ldots,k\}$,  the difference \textcolor{blue}{$(m+1)k-2i+1$} is odd.   
Below, the differences in \( C_1, C_2, \ldots, C_k \) are listed explicitly.

{\footnotesize{
\begin{eqnarray*}
C_1: &1,2, \ldots, (m-5)k+2, \textcolor{blue}{(m+1)k-1}, (m-1)k-1, \ldots,6k-1, 2k+1, \textcolor{blue}{(m-1)k+2}\\
C_2: &3,4,\ldots,  (m-5)k+4, \textcolor{blue}{(m+1)k-3}, (m-1)k-3, \ldots,6k-3, 2k+3, \textcolor{blue}{(m-1)k+6}\\
C_3: &5,6, \ldots, (m-5)k+6, \textcolor{blue}{(m+1)k-5}, (m-1)k-5, \ldots,6k-5, 2k+5, \textcolor{blue}{(m-1)k+10}\\
&\vdots& \\ 
C_{k-2}: &2k-5,2k-4,\ldots, (m-3)k-4, \textcolor{blue}{(m-1)k+5}, (m-3)k+5, \ldots,4k+5, 4k-5, \textcolor{blue}{(m+1)k+11}\\
C_{k-1}: &2k-3,2k-2, \ldots, (m-3)k-2, \textcolor{blue}{(m-1)k+3}, (m-3)k+3, \ldots,4k+3, 4k-3, \textcolor{blue}{(m+1)k+7}\\
C_k: &2k-1,2k,\ldots, (m-3)k, \textcolor{blue}{(m-1)k+1}, (m-3)k+1, \ldots,4k+1, 4k-1, \textcolor{blue}{(m+1)k+3}
\end{eqnarray*}}}

Observe that $C_1, C_2, \ldots, C_k$ jointly contain exactly one edge of each difference in $\{1,2,3,\allowbreak \ldots ,(m+2)k\}$  except for the following differences:
\begin{enumerate}[(i)]
\item $(m-3)k+2j$, for $j\in\{1,2,\ldots,k\}$;
\item $(m-1)k+4j$, for $j\in\{1,2,\ldots,\floor{\frac{3k}{4}}\}$;
\item $(m+1)k+4(k-i)+1$, for $j\in\{\floor{\frac{3k}{4}}+1,\ldots,k\}$.
\end{enumerate}
There are  $2k$ differences from the set $\{1,2,3,\ldots ,(m+2)k\}$ that are not covered by any $C_i$. Next, we define subgraphs that cover the leftover differences. 
First assume $m\geq 7$, and for all $j\in\{1,2,\ldots,k\}$, let $E_j^{(1)}$ and $E_j^{(2)}$ denote the following subgraphs of $K_n$ (isomorphic to $K_2$):
\begin{itemize}
\item $E_j^{(1)}= x_{-(\frac{m-5}{2}k+j)}\ \ x_{\frac{m-1}{2}k+j}$;  and
\item $E_j^{(2)}= x_{-(\frac{m+3}{2}k+3j)}\ \ x_{\frac{m-5}{2}k+j}$.
\end{itemize}
Note that the edge in \(E_j^{(1)}\) has difference \((m - 3)k + 2j\). The edge in \(E_j^{(2)}\), for \(j \in \{1, 2, \ldots, \lfloor \frac{3k}{4} \rfloor\}\), has difference \((m - 1)k + 4j\), and for \(j \in \{\lfloor \frac{3k}{4} \rfloor + 1, \ldots, k\}\), it has difference \((m + 1)k + 4(k - j) + 1\).

Observe that none of the vertices in the intervals 
\([x_{-(\frac{m-5}{2}k+1)}, x_{-(\frac{m+9}{2}k)}]\), 
\([x_{\frac{m-1}{2}k+1}, x_{\frac{m+1}{2}k}]\), and 
\([x_{\frac{m-5}{2}k+1}, x_{\frac{m-3}{2}k}]\) 
are used in any \(C_i\), and only vertices from these intervals are used to construct the graphs \(E_j^{(1)}\) and \(E_j^{(2)}\). 
Hence, each \(C_i\) is disjoint from all subgraphs \(E_j^{(1)}\) and \(E_j^{(2)}\), for \(j \in \{1, 2, \ldots, k\}\).

For $m=3$, the missing differences are as follows:
\begin{enumerate}[(i)]
\item $2j$, for $j\in\{1,2,\ldots,k\}$;
\item $2k+4j$, for $j\in\{1,2,\ldots,\floor{\frac{3k}{4}}\}$;
\item $8k-4j+1$, for $j\in\{\floor{\frac{3k}{4}}+1,\ldots,k\}$.
\end{enumerate}
The following subgraphs cover the differences listed above: 
\begin{itemize}
\item \( E_j^{(1)} = x_{-(2k + j)}\, x_{-(2k - j)} \), with difference \( 2j \), for \( j \in \{1, 2, \ldots, k\} \);

\item \( E_j^{(2)} = x_{-(k + 3j)}\, x_{k + j} \), which has difference \( 2k + 4j \) for \( j \in \left\{1, 2, \ldots, \left\lfloor \frac{3k}{4} \right\rfloor \right\} \), and difference \( 8k - 4j + 1 \) for \( j \in \left\{ \left\lfloor \frac{3k}{4} \right\rfloor + 1, \ldots, k \right\} \).

\end{itemize}
%
%
In all four cases above, we have shown how to construct \( m \)-cycles \( C_1, C_2, \ldots, C_k \), and subgraphs  \( E_1^{(1)}, E_2^{(1)}, \ldots, E_k^{(1)} \) and \( E_1^{(2)}, E_2^{(2)}, \ldots, E_k^{(2)} \);  now, for \( i \in \{1, 2, \ldots, k\} \), let \( F_i = C_i \mathbin{\dot{\cup}} (E_i^{(1)} \cup E_i^{(2)}) \). 
Since \( F_1, F_2, \ldots, F_k \) jointly contain exactly one edge of each difference in \( \{1, 2, \ldots, (m + 2)k\} \), we see that
\(
\mathcal{D} = \{ \rho^i(F_1), \rho^i(F_2), \ldots, \rho^i(F_k) : i \in \mathbb{Z}_n \}
\)
is a decomposition of \( K_n \) that satisfies the conditions of Lemma~\ref{lem:Newtool-00}. Applying Lemma~\ref{lem:Newtool-00}, we conclude that the multigraph \( 4K_n^{\bullet} \) admits an HOP \((C_2, C_m)\)-decomposition. 
\end{proof}


\subsection{HOP $(C_2, C_m)$-Decompositions of $4K_n^{\bullet}$ when $ n \equiv m + 2\ (\mathrm{mod}\ 2(m + 2))$ }

\begin{lem}{\label{lem:Newtool-T2}}
Let $m \geq 3$ be an odd integer, and let $k \geq 0$ be an integer. Let $n = (2k + 1)(m + 2)$. Then there exists an HOP $(C_2, C_m)$-decomposition of $4K_n^{\bullet}$.
\end{lem}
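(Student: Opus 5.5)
We use Lemma~\ref{lem:4Kn-d}. Note first that $n=(2k+1)(m+2)$ is odd, because $m$ is odd. For $k=0$ we have $n=m+2$, and the statement is exactly Lemma~\ref{lems-1starter}(\ref{lems-3starter-4}) or (\ref{lems-1starter-4}), according as $m\equiv 1$ or $3 \pmod 4$. So assume $k\geq 1$, and label $V(4K_n^{\bullet})=\{x_i : i\in\ZZ_{n-1}\}\cup\{x_\infty\}$ with $\rho=(x_\infty)(x_0\ x_1\ \ldots\ x_{n-2})$.

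\emph{Counting.} Each difference $d\in\{1,\ldots,\frac{n-3}{2}\}\cup\{\infty\}$ gives exactly four orbits of $\langle\rho_\bullet\rangle$: pink, blue, black in one direction, and black in the other. That is $4\cdot\frac{n-1}{2}=2(n-1)$ orbits. Conditions (B2) and (B3) then force the starter subgraphs to contain $2(n-1)+2=2n$ edges in total. Since $2n=2(2k+1)(m+2)$, we need exactly $s=2(2k+1)=4k+2$ starter $(C_2,C_m)$-subgraphs $F_1,\ldots,F_{4k+2}$.

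\emph{Construction of the $m$-cycles.} We split the $4k+2$ $m$-cycles into two groups.
\begin{enumerate}[(a)]
\item In $\mathrm{Circ}(n-1;\pm S)\bowtie K_1$ we build $k$ base $m$-cycles $C_1,\ldots,C_k$ that use pairwise distinct differences. We use the zig-zag path technique of Lemma~\ref{lem:Newtool-1b}: a path $x_{1-i}\,x_i\,x_{-i}\,x_{2k'+i}\ldots$ joined by one or two ``blue'' connecting edges, possibly routed through $x_\infty$. Each $C_i$ is turned into four HOP $m$-cycles by Lemma~\ref{lem:Gtool4}, exactly as in the proof of Lemma~\ref{lem:Newtool-00}. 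These four cycles together contain one edge from each of the four orbits of every difference used in $C_i$. This gives $4k$ of the $m$-cycles.
\item The remaining two $m$-cycles are constructed directly in $4K_n^{\bullet}$ as HOP cycles. Since $m$ is odd, an $m$-cycle cannot alternate pink and blue. We therefore take, for example, a directed black $m$-cycle and its reverse, on the same difference sequence. Together they use both black orbits of $m$ differences; the pink and blue orbits of those $m$ differences remain to be covered.
\end{enumerate}

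\emph{Construction of the $2$-cycles.} The $4k+2$ $2$-cycles come from Lemma~\ref{G4C}: a pink--blue $2$-cycle covers the pink and blue orbits of one difference, and an opposite black pair covers both black orbits of one difference. One of these $2$-cycles must be a pink--blue (or black) $2$-cycle on a diameter edge $x_ix_{i+\frac{n-1}{2}}$, as (B2) requires.

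\emph{Bookkeeping.} The differences used by the $k$ base cycles, the $m$ differences of the black cycles, and the differences assigned to the $2$-cycles must together cover every difference in $\{1,\ldots,\frac{n-3}{2},\infty\}$, each with all four orbits exactly once. A count shows this is consistent: the base cycles use $km$ differences, the black pair uses $m$, the $2$-cycles use $m$ pink--blue slots and $(4k+2)-1-m$ further slots, and the total matches $\frac{n-1}{2}=\frac{(2k+1)(m+2)-1}{2}$.

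\emph{Main obstacle.} The hard step is vertex-disjointness inside each $F_j$: each $2$-cycle must be placed on vertices not used by the $m$-cycle it is paired with. This mirrors the ``unused interval'' arguments of Lemma~\ref{lem:Newtool-1b}. The plan is to place the leftover-difference edges symmetrically, as $x_{-a}x_{a+\delta}$, inside intervals that the zig-zag cycles skip. For each $F_j$ one then checks (B1)--(B3) and applies Lemma~\ref{lem:4Kn-d}. Small $m$, such as $m=3$, will probably need separate ad hoc starters.
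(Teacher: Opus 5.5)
Your reduction to Lemma~\ref{lem:4Kn-d} and the count $s=4k+2$ are correct, and $k=0$ is indeed Lemma~\ref{lems-1starter}. However, the colour allocation you propose does not balance in general. The $k$ quadrupled base cycles use up all four orbits of $km$ differences. The black cycle and its reverse take the black orbits of $m$ further differences, whose pink and blue orbits then need $m$ pink--blue $2$-cycles. Only $4k+1$ non-diameter $2$-cycles exist, so you need $(k+1)m\le \frac{n-1}{2}=km+2k+\frac{m+1}{2}$, i.e.\ $m\le 4k+1$. Otherwise your ``further slots'' number $(4k+2)-1-m$ is negative. For example, $k=1$, $m=7$ gives $n=27$ with only $13$ usable differences ($1,\ldots,12,\infty$), while the scheme needs $14$.

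The constraint is even tighter because of $x_\infty$. A cycle through $x_\infty$ has two edges of difference $\infty$, so its four HOP copies put two edges into each $\infty$-orbit, violating (B3). A black cycle through $x_\infty$ together with its reverse does the same. Hence neither the base cycles nor the black pair may use $x_\infty$, so $\infty$ must be covered by a pink--blue and a black $2$-cycle. The scheme as stated therefore requires $m\le 4k-1$; for $k=1$ only $m=3$ survives. For the remaining infinitely many pairs $(m,k)$ you would need a genuinely different distribution of colours over the $4k+2$ $m$-cycles, which the proposal does not provide.

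You have also put the difficulty in the wrong place. In the setting of Lemma~\ref{lem:4Kn-d}, a $2$-cycle only needs one representative of a prescribed orbit. An $m$-cycle meets at most $2m<n-1$ of the $n-1$ edges of a fixed finite non-diameter difference, and at most $m<\frac{n-1}{2}$ diameter edges. So disjoint $2$-cycles are automatic. The real content is exhibiting $m$-cycles with prescribed, pairwise distinct differences, subject to constraints such as a cycle in $\mathbb{Z}_{n-1}$ needing an even number of odd differences. That construction is left entirely as a plan.

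The paper avoids the difference-budget problem by writing $K_n=(2k+1)K_{m+2}\oplus K_{(2k+1)[m+2]}$. The copies of $K_{m+2}$ are handled by Lemmas~\ref{lems-1starter} and~\ref{Gtool3}. The equipartite part is a circulant on $\mathbb{Z}_n$, with no fixed point and no diameter since $n$ is odd, and it has exactly $(m+2)k$ differences. These are matched by $k$ explicit starters $C_i\mathbin{\dot{\cup}}(E_i^{(1)}\cup E_i^{(2)})$ of $m+2$ edges each, built separately for $m\equiv 1,3 \pmod 4$ with $m=3,5$ done ad hoc. Lemma~\ref{lem:Newtool-00} then lifts them to $4K_{(2k+1)[m+2]}^{\bullet}$.
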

\begin{proof} 
Notice that \( K_{(2k+1)(m+2)} = H_1 \oplus H_2 \), where \( H_1 \) is the disjoint union of \( 2k + 1 \) copies of \( K_{m+2} \), and \( H_2 \) is the complete equipartite graph with \( 2k + 1 \) parts of size \( m + 2 \), that is, \( K_{(2k+1)[m+2]} \). By Lemma~\ref{lems-1starter}, the multigraph \( 4K_{m+2}^{\bullet} \) admits an HOP \((C_2, C_m)\)-factorization. Consequently, by Lemma~\ref{Gtool3}, \( 4H_1^{\bullet} \) admits an HOP \((C_2, C_m)\)-decomposition. Hence, by Lemma~\ref{Gtool3}, it suffices to construct an HOP \((C_2, C_m)\)-decomposition of \( 4K_{(2k+1)[m+2]}^{\bullet} \).


For simplicity, let $\ell = 2k + 1$.  The complete equipartite graph $K_{\ell[m+2]}$ is a circulant graph, Circ$(n; \pm S)$, with vertex set $\{x_i: i\in \ZZ_n\}$, where $n=(m+2)\ell$, and edge set $\{x_ix_{i+d}: i\in \ZZ_n, d\in S\}$, where 
$$S=\Big\{1,2,\ldots, \ell -1, \ell+1, \ldots,2\ell-1, 2\ell+1, \ldots, \frac{(m+2)\ell-1}{2}\Big\}.$$
Let  $\rho=(x_0\ x_1\ \ldots \ x_{n-1})$ be a permutation on $V(K_{\ell[m+2]})$.
Observe that \( S \) contains all differences from 1 to \( \frac{(m+2)\ell - 1}{2} \), except those in the set \( \{\ell, 2\ell, \ldots, \tfrac{(m+1)}{2}\ell\} \). Therefore, the group \( \langle \rho \rangle \) acting on \( E(K_{\ell[m+2]}) \) has $(m+2)k$ orbits, each of size \( n = (m+2)\ell \).



We aim to construct $k$ subgraphs of $K_{\ell[m+2]}$, namely $F_1,F_2, \ldots, F_k$, where each $F_i$ is a disjoint union of a $(C_m)$-subgraph and two edge-disjoint $1$-regular subgraphs of order $2$, and $F_1,F_2, \ldots, F_k$ jointly contain exactly one edge of each difference in $S$.  Therefore, $\D=\{\rho^i(F_1), \allowbreak \rho^i(F_2), \ldots, \rho^i(F_k): i\in \ZZ_n\}$ is a decomposition of $K_{\ell[m+2]}$ that satisfies the condition of  Lemma \ref{lem:Newtool-00} (for $\alpha=1$ and $t=1$); hence, by Lemma \ref{lem:Newtool-00}, the multigraph $4K_{\ell[m+2]}^{\bullet}$ admits an HOP $(C_2, C_m)$-decomposition.

We construct a $(C_m)$-subgraph by first building two paths, $P$ and $Q$, and then joining them to form the cycle $C_m$. Since \( m \) is odd, it suffices to  consider the following two cases: \( m \equiv 1 \pmod{4} \) and \( m \equiv 3 \pmod{4} \).

\noindent {\bf{Case 1:}}  {\boldmath{$m\equiv 1\ ({\rm mod}\ 4)$.}}
 For $i\in\{1,2,\ldots,k\}$, define a walk $P_i$ as follows:
\begin{eqnarray*}
P_i &=& x_{1-i}\ x_i\ x_{-i}\ x_{(\ell-1)+i}\ x_{-((\ell+1)+i)}\ x_{2(\ell-1)+i}\ x_{-(2(\ell+1)+i)} \ldots \\
&\ldots&   x_{\frac{m-9}{4}(\ell-1)+i}\ x_{-(\frac{m-9}{4}(\ell+1)+i)} \ x_{\frac{m-5}{4}(\ell-1)+i}\ x_{-(\frac{m-5}{4}(\ell+1)+i)} \ x_{\frac{m-1}{4}(\ell-1)+i}.
\end{eqnarray*}
Observe that $P_i$ is a path of length $\frac{m+1}{2}$.

{\bf{Subcase 1.1:}}
Assume $m \geq 9$, and for  $i \in \{1, 2, \ldots, k\}$, define a walk $Q_i$ as follows:

{\scriptsize{\begin{eqnarray*}
Q_i &=& x_{-({\frac{m+7}{4}(\ell-1)+\frac{m-1}{2}+3i})}\ \ x_{\frac{m+7}{4}(\ell+1)-\frac{m-1}{2}-i}\ \ x_{-(\frac{m+11}{4}(\ell-1)+\frac{m-1}{2}+3i)}\ \ x_{\frac{m+11}{4}(\ell+1)-\frac{m-1}{2}-i}\  \ldots \\
&& \ldots \ x_{-({\frac{m-3}{2}(\ell-1)+\frac{m-1}{2}+3i})}\ \ x_{{\frac{m-3}{2}(\ell+1)-\frac{m-1}{2}-i}} \ \ x_{-({\frac{m-1}{2}(\ell-1)+\frac{m-1}{2}+3i})}\ \ x_{{\frac{m-1}{2}(\ell+1)-\frac{m-1}{2}-i}}\ \ x_{{\frac{m+1}{2}\ell+i}}. 
\end{eqnarray*}}}
Observe that $Q_i$ is a path of length  $\frac{m-5}{2}$.
Assume that $m \geq 9$ and  for $i\in\{1,2,\ldots,k\}$ let: 
$$C_i=P_i \ \textcolor{blue}{x_{\frac{m-1}{4}(\ell-1)+i}\  x_{-({\frac{m+7}{4}(\ell-1)+\frac{m-1}{2}+3i})}}\ Q_i \ \textcolor{blue}{x_{{\frac{m+1}{2}\ell+i}} \ x_{1-i}}.$$
It can be seen that $C_i$ is a cycle of length $m$ (see Figure~\ref{fig:starter2}), and traverses edges of the following differences: 
\begin{eqnarray*}
&2i-1,2i, \ell+2i-1, 2\ell+2i,3\ell+2i-1, 4\ell+2i, 5\ell+2i-1, \ldots,\\
&  \frac{m-9}{2}\ell+2i, \frac{m-7}{2}\ell+2i-1, \frac{m-5}{2}\ell+2i, \frac{m-3}{2}\ell+2i-1,  \\
& \textcolor{blue}{\frac{m+1}{2}\ell-4i+2}, \frac{m-3}{2}\ell-2i,  \frac{m-5}{2}\ell-2i+1,  \frac{m-7}{2}\ell-2i, \ldots, \\
&5\ell-2i, 4\ell-2i+1, 3\ell-2i, \ell+2i, \textcolor{blue}{d}
\end{eqnarray*}
Note that for $i\in\{1,2,\ldots,\floor{\frac{k+1}{2}}\}$, the difference \textcolor{blue}{$d= \frac{m+1}{2}\ell+2i-1$}, and it is even; for $i\in\{\floor{\frac{k+1}{2}}+1,\ldots,k\}$, the difference \textcolor{blue}{$d= \frac{m+3}{2}\ell-2i+1$}, and it is odd. 
Moreover, for all $i\in\{1,2,\ldots,k\}$,  the difference \textcolor{blue}{$\frac{m+1}{2}\ell-4i+2$} is odd.   
The explicit differences in $C_1,C_2, \ldots, C_k$ are listed below.

{\footnotesize{
\begin{eqnarray*}
C_1: &1,2, \ell+1,\ldots,  \frac{m-3}{2}\ell+1,\textcolor{blue}{\frac{m+1}{2}\ell-2}, \frac{m-3}{2}\ell-2, \ldots,3\ell-2, \ell+2, \textcolor{blue}{\frac{m+1}{2}\ell+1}\\
C_2: &3,4, \ell+3,\ldots,   \frac{m-3}{2}\ell+3, \textcolor{blue}{\frac{m+1}{2}\ell-6},  \frac{m-3}{2}\ell-4, \ldots,3\ell-4, \ell+4, \textcolor{blue}{\frac{m+1}{2}\ell+3}\\
C_3: &5,6, \ell+5,\ldots, \frac{m-3}{2}\ell+5, \textcolor{blue}{\frac{m+1}{2}\ell-10}, \frac{m-3}{2}\ell-6, \ldots,3\ell-6, \ell+6, \textcolor{blue}{\frac{m+1}{2}\ell+5}\\
&\vdots& \\
C_{k-2}: &2k-5,2k-4,2\ell-6,\ldots,  \frac{m-1}{2}\ell-6, \textcolor{blue}{\frac{m-3}{2}\ell+12},  \frac{m-5}{2}\ell+5, \ldots,2\ell+5, 2\ell-5, \textcolor{blue}{\frac{m+1}{2}\ell+6}\\
C_{k-1}: &2k-3,2k-2, 2\ell-4, \ldots,\frac{m-1}{2}\ell-4, \textcolor{blue}{\frac{m-3}{2}\ell+8}, \frac{m-5}{2}\ell+3, \ldots,2\ell+3, 2\ell-3, \textcolor{blue}{\frac{m+1}{2}\ell+4}\\
C_k: &2k-1,2k, 2\ell-2,\ldots, \frac{m-1}{2}\ell-2, \textcolor{blue}{\frac{m-3}{2}\ell+4},  \frac{m-5}{2}\ell+1, \ldots,2\ell+1, 2\ell-1, \textcolor{blue}{\frac{m+1}{2}\ell+2}
\end{eqnarray*}}}

Observe that $C_1, C_2, \ldots, C_k$ jointly contain exactly one edge of each difference in $S$  except for the following differences:
\begin{enumerate}[(i)]
\item $\frac{m-1}{2}\ell+2j$, for $j\in\{1,2,\ldots,k\}$;
\item $\frac{m-3}{2}\ell+4j-2$, for $j\in\{1,2,\ldots,k\}$.
\end{enumerate}
Note that we have $2k$ differences from $S$ that are not covered by any $C_i$. 
For all $j\in\{1,2,\ldots,k\}$, let $E_j^{(1)}$ and $E_j^{(2)}$ denote the follwoing subgraphs that cover the leftover differences:
\begin{itemize}
\item $E_j^{(1)} = x_{-(\frac{m-3}{4}(\ell+1)+j)}\ \ x_{\frac{m+1}{4}(\ell-1)+j+1}$; and
\item $E_j^{(2)}= x_{-(\frac{m-3}{4}(\ell+1)+3j-2)}\ \ x_{\frac{m-3}{4}(\ell-1)+j}$.
\end{itemize}
Note that for $j\in\{1,2,\ldots,k\}$, the edge in $E_j^{(1)}$ is of difference $\frac{m-1}{2}\ell+2j$, and the edge in $E_j^{(2)}$ is of difference  $\frac{m-3}{2}\ell+4j-2$.

Moreover, observe that none of the vertices in the intervals $[x_{-(\frac{m - 3}{4}(\ell + 1))},\ x_{-(\frac{m + 7}{4}(\ell + 1) - 2)}]$, $[x_{\frac{m + 1}{4}(\ell - 1) + 1},\ \allowbreak x_{\frac{m + 5}{4}(\ell - 1) + 3}]$, and $[x_{\frac{m - 3}{4}(\ell - 1) + 1},\ x_{\frac{m - 1}{4}(\ell - 1)}]$ are used in any $C_i$, and only vertices in these intervals are used to construct the subgraphs $E_j^{(1)}$ and $E_j^{(2)}$. Hence, each $C_i$ is disjoint from all subgraphs $E_j^{(1)}$ and $E_j^{(2)}$.

{\bf{Subcase 1.2:}}
Assume $m = 5$. Let $C_i=P_i \ \textcolor{blue}{x_{(\ell-1)+i}\  x_{2\ell+3i+1} \ x_{1-i}}.$
It can be seen that $C_i$ traverses edges of differences $ 2i-1,2i,\ell+2i-1, \textcolor{blue}{\ell+2i+2}, \textcolor{blue}{d}$.
Note that for $i\in\{1,2,\ldots,\floor{\frac{3k+1}{4}}\}$, the difference \textcolor{blue}{$d=2\ell+4i$}, and for $i\in\{\floor{\frac{3k+1}{4}}+1,\ldots,k\}$, the difference \textcolor{blue}{$d=5\ell-4i$}. 
\begin{eqnarray*}
C_1: &1,2, \ell+1, \textcolor{blue}{\ell+4}, \textcolor{blue}{2\ell+4}\\
C_2: &3,4, \ell+3, \textcolor{blue}{\ell+6}, \textcolor{blue}{2\ell+8}\\
C_3: &5,6, \ell+5, \textcolor{blue}{\ell+8}, \textcolor{blue}{2\ell+12}\\
&\vdots& \\
C_{k-2}: &2k-5,2k-4,2\ell-6, \textcolor{blue}{2\ell-3}, \textcolor{blue}{3\ell+10}\\
C_{k-1}: &2k-3,2k-2, 2\ell-4, \textcolor{blue}{2\ell-1}, \textcolor{blue}{3\ell+6}\\
C_k: &2k-1,2k, 2\ell-2, \textcolor{blue}{2\ell+1}, \textcolor{blue}{3\ell+2}
\end{eqnarray*}
For $m=5$, the missing differences are as follows:
\begin{enumerate}[(i)]
\item $2\ell+4j-2=4k+4j$, for $j\in\{1,2,\ldots,\floor{\frac{3}{4}(k+1)}\}$;
\item  $5\ell-4j+2=10k-4j+7$, for $j\in\{\floor{\frac{3}{4}(k+1)}+1, \ldots, k\}$; 
\item $2\ell+2j+1=4k+2j+3$, for $j\in\{1,2,\ldots,k-1\}$; and
\item $\ell+2=2k+3$.

\end{enumerate}
The following subgraphs cover the differences listed above: 
\begin{itemize}

\item Let \( E_j^{(1)} = x_{-(k+3j)}\, x_{3k+j} \); it is of difference \( 2\ell + 4j - 2 \) for \( j \in \left\{ 1, 2, \ldots, \left\lfloor \tfrac{3}{4}(k+1) \right\rfloor \right\} \), and of difference \( 5\ell - 4j + 2 \) for \( j \in \left\{ \left\lfloor \tfrac{3}{4}(k+1) \right\rfloor + 1, \ldots, k \right\} \);


\item Let \( E_j^{(2)} = x_{-(3k + j + 3)}\, x_{k + j} \); it is of difference \( 2\ell + 2j + 1 \) for \( j \in \{1, 2, \ldots, k - 1\} \); and


\item Let \( E_k^{(2)} = x_{-(7k + 3)}\, x_{-5k} \); it is of difference \( \ell + 2 \).


\end{itemize}
Observe that none of the vertices in the intervals \( [x_{-(k + 1)},\ x_{-(7k + 3)}] \), \( [x_{k + 1},\ x_{2k}] \), and \( [x_{3k + 1},\allowbreak\ x_{4k + 5}] \) are used in any \( C_i \). Only vertices from these intervals are used to construct the graphs \( E_j^{(1)} \) and \( E_j^{(2)} \). Hence, each \( C_i \) is disjoint from all subgraphs \( E_j^{(1)} \) and \( E_j^{(2)} \). 

\begin{figure}[hp]
 \centering
    \includegraphics[width=0.75\linewidth]{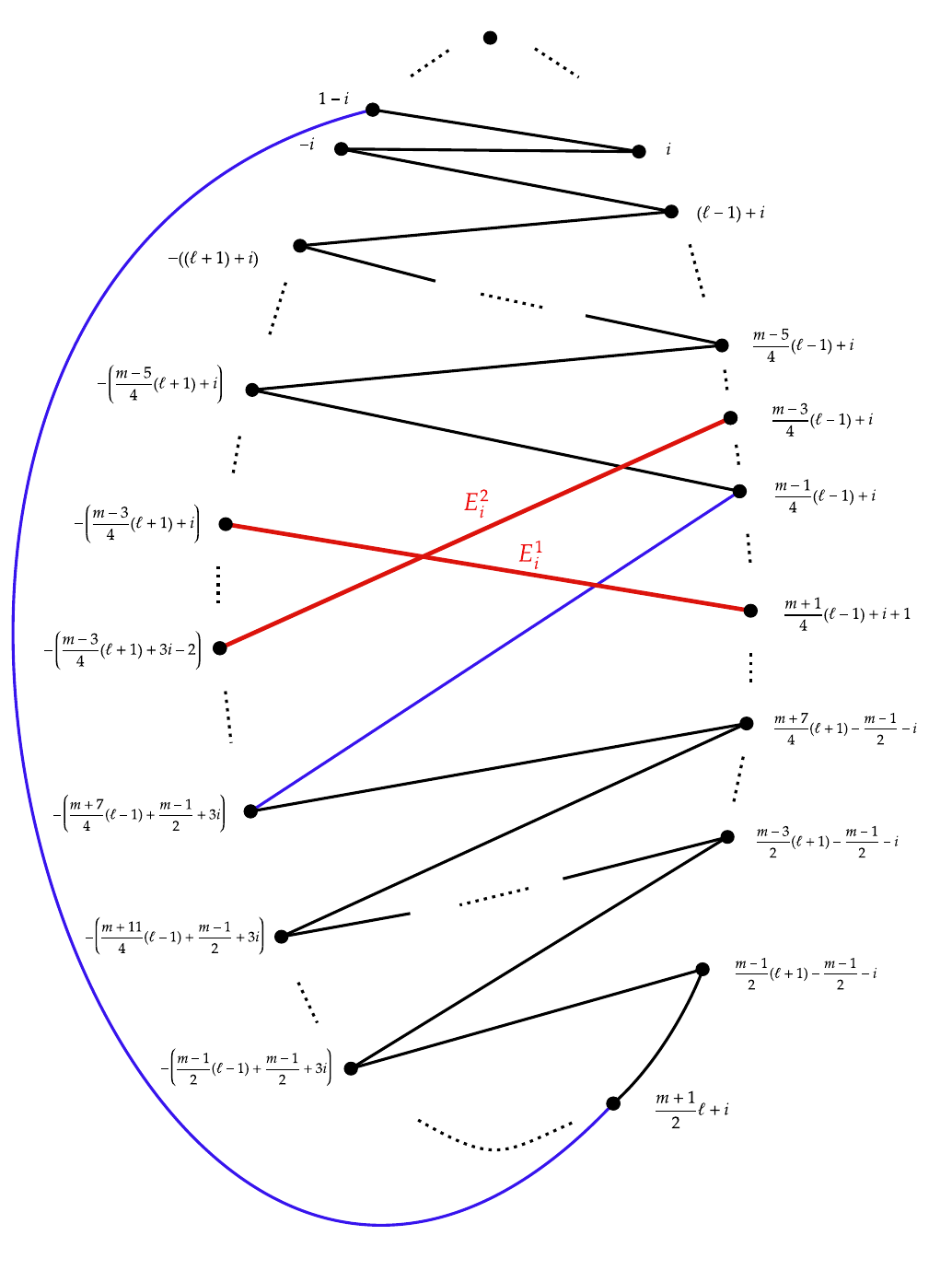}
    \caption{ Subgraph $F_i$ in Lemma~\ref{lem:Newtool-T2} for $m\equiv 1\ ({\rm mod}\ 4)$ and $m\geq 9$.}
    \label{fig:starter2}
  \end{figure}

\noindent {\bf{Case 2:}}  {\boldmath{$m\equiv 3\ ({\rm mod}\ 4)$.}}
 For $i\in\{1,2,\ldots,k\}$, define walks $P_i$ and $Q_i$ as follows:
\begin{eqnarray*}
P_i &=& x_{1-i}\ x_i\ x_{-i}\ x_{(\ell-1)+i}\ x_{-((\ell+1)+i)}\ x_{2(\ell-1)+i}\ x_{-(2(\ell+1)+i)} \ldots \\
&\ldots&   x_{\frac{m-11}{4}(\ell-1)+i}\ x_{-(\frac{m-11}{4}(\ell+1)+i)} \ x_{\frac{m-7}{4}(\ell-1)+i}\ x_{-(\frac{m-7}{4}(\ell+1)+i)} \ x_{\frac{m-3}{4}(\ell-1)+i}.
\\
Q_i &=& x_{-({\frac{m+5}{4}(\ell-1)+\frac{m+3}{2}-3i})}\ \ x_{\frac{m+5}{4}(\ell+1)-\frac{m+3}{2}+5i}\ \ x_{-({\frac{m+9}{4}(\ell-1)+\frac{m+3}{2}-3i})}\\ 
&& x_{\frac{m+9}{4}(\ell+1)-\frac{m+3}{2}+5i}\ \ \ x_{-({\frac{m+13}{4}(\ell-1)+\frac{m+3}{2}-3i})}\ \ x_{\frac{m+13}{4}(\ell+1)-\frac{m+3}{2}+5i}\ \ldots\\
&& \ldots  \ x_{-({\frac{m-5}{2}(\ell-1)+\frac{m+3}{2}-3i})}\ \ x_{{\frac{m-5}{2}(\ell+1)-\frac{m+3}{2}+5i}} \ \ x_{-({\frac{m-3}{2}(\ell-1)+\frac{m+3}{2}-3i})}\\
&& x_{{\frac{m-3}{2}(\ell+1)-\frac{m+3}{2}+5i}} \ \ x_{-({\frac{m-1}{2}(\ell-1)+\frac{m+3}{2}-3i})}\ \ x_{{\frac{m-1}{2}(\ell+1)-\frac{m+3}{2}+5i}}\ \ x_{-({\frac{m+1}{2}(\ell-1)+\frac{m+3}{2}-3i})}. 
\end{eqnarray*}
It is easy to verify that  $P_i$ is a path of length $\frac{m-1}{2}$ and $Q_i$ is a path of length  $\frac{m-3}{2}$. 
Now, for $i\in\{1,2,\ldots,k\}$, let: 
$$C_i=P_i \ \textcolor{blue}{x_{\frac{m-3}{4}(\ell-1)+i}\  x_{-({\frac{m+5}{4}(\ell-1)+\frac{m+3}{2}-3i})}}\ Q_i \ \textcolor{blue}{x_{-({\frac{m+1}{2}(\ell-1)+\frac{m+3}{2}-3i})} \ x_{1-i}}.$$
It can be seen that $C_i$ is a cycle of length $m$, and traverses edges of the following differences: 
\begin{eqnarray*}
&2i-1,2i, \ell+2i-1, 2\ell+2i,3\ell+2i-1, 4\ell+2i, 5\ell+2i-1, \ldots,\\
&  \frac{m-11}{2}\ell+2i, \frac{m-9}{2}\ell+2i-1, \frac{m-7}{2}\ell+2i, \frac{m-5}{2}\ell+2i-1,  \\
& \textcolor{blue}{\frac{m+1}{2}\ell-2i+1}, \frac{m-1}{2}\ell-2i,  \frac{m-3}{2}\ell-2i+1,  \frac{m-5}{2}\ell-2i, \ldots, \\
&5\ell-2i, 4\ell-2i+1, 3\ell-2i, 2\ell-2i+1, \textcolor{blue}{\frac{m+1}{2}\ell-4i+2}
\end{eqnarray*}
The explicit differences in $C_1,C_2, \ldots, C_k$ are listed below.
{\footnotesize{
\begin{eqnarray*}
C_1: &1,2, \ell+1,\ldots,  \frac{m-5}{2}\ell+1,\textcolor{blue}{\frac{m+1}{2}\ell-1}, \frac{m-1}{2}\ell-2, \ldots,3\ell-2, 2\ell-1, \textcolor{blue}{\frac{m+1}{2}\ell-2}\\
C_2: &3,4, \ell+3,\ldots,   \frac{m-5}{2}\ell+3, \textcolor{blue}{\frac{m+1}{2}\ell-3},  \frac{m-1}{2}\ell-4, \ldots,3\ell-4, 2\ell-3, \textcolor{blue}{\frac{m+1}{2}\ell-6}\\
C_3: &5,6, \ell+5,\ldots, \frac{m-5}{2}\ell+5, \textcolor{blue}{\frac{m+1}{2}\ell-5}, \frac{m-1}{2}\ell-6, \ldots,3\ell-6, 2\ell-5, \textcolor{blue}{\frac{m+1}{2}\ell-10}\\
&\vdots& \\
C_{k-2}: &2k-5,2k-4,2\ell-6,\ldots,  \frac{m-3}{2}\ell-6, \textcolor{blue}{\frac{m-1}{2}\ell+6},  \frac{m-3}{2}\ell+5, \ldots,2\ell+5, \ell+6, \textcolor{blue}{\frac{m-3}{2}\ell+12}\\
C_{k-1}: &2k-3,2k-2, 2\ell-4, \ldots,\frac{m-3}{2}\ell-4, \textcolor{blue}{\frac{m-1}{2}\ell+4}, \frac{m-3}{2}\ell+3, \ldots,2\ell+3, \ell+4, \textcolor{blue}{\frac{m-3}{2}\ell+8}\\
C_k: &2k-1,2k, 2\ell-2,\ldots, \frac{m-3}{2}\ell-2, \textcolor{blue}{\frac{m-1}{2}\ell+2},  \frac{m-3}{2}\ell+1, \ldots,2\ell+1, \ell+2, \textcolor{blue}{\frac{m-3}{2}\ell+4}
\end{eqnarray*}}}
Observe that $C_1, C_2, \ldots, C_k$ jointly contain exactly one edge of each difference in $S$  except for the following differences:
\begin{enumerate}[(i)]
\item $\frac{m-3}{2}\ell+4j-2$, for $j\in\{1,2,\ldots,k\}$;
\item $\frac{m+1}{2}\ell+j$, for $j\in\{1,2,\ldots,k\}$.
\end{enumerate}
Note that we have $2k$ differences from $S$ that are not covered by any $C_i$. 

{\bf{Subcase 2.1:}} Assume $m\geq  7$. 
For all $j\in\{1,2,\ldots,k\}$, let $E_j^{(1)}$ and $E_j^{(2)}$ denote the following subgraphs of $K_n$ (isomorphic to $K_2$):
\begin{itemize}
\item $E_j^{(1)}= x_{-(\frac{m-5}{4}(\ell+1)+2j-1)}\ \ x_{\frac{m-1}{4}(\ell-1)+2j}$; and
\item $E_j^{(2)}= x_{-(\frac{m-1}{4}(\ell+1)+1)}\ \ x_{\frac{m+3}{4}(\ell-1)+j}$.
\end{itemize}
Note that, for $j\in\{1,2,\ldots,k\}$, the edge in $E_j^{(1)}$ is of difference  $\frac{m-3}{2}\ell+4j-2$, and the edge in $E_j^{(2)}$ is of difference  $\frac{m+1}{2}\ell+j$. Moreover, none of the vertices in the intervals $[x_{-(\frac{m-5}{4}(\ell+1))}, x_{-(\frac{m-1}{4}(\ell+1)+1)}]$ and  $[x_{\frac{m-1}{4}(\ell-1)+1}, x_{\frac{m+5}{4}(\ell-1)+5}]$ are used in any $C_i$, and only vertices from these intervals are used  to construct  the subgraphs $E_j^{(1)}$ and $E_j^{(2)}$.  Hence, each $C_i$ is disjoint from all subgraphs $E_j^{(1)}$ and $E_j^{(2)}$. 

{\bf{Subcase 2.2:}} Assume $m=3$. The missing differences are as follows:
\begin{enumerate}[(i)]
\item $4j-2$, for $j\in\{1,2,\ldots,k\}$;
\item  $2\ell+j=4k+j+2$, for $j\in\{1,2,\ldots,k\}$.
\end{enumerate}
The following subgraphs cover the differences listed above: 
\begin{itemize}
\item the graph \( E_j^{(1)} = x_{3k + 2j}\, x_{3k - 2j + 2} \), which is of difference \( 4j - 2 \); and

\item the graph $E_j^{(2)}=x_{-(k+2)} \ x_{3k+j}$, which is of difference $4k+j+2$.

\end{itemize}
Observe that the vertex \( x_{-(k + 2)} \), as well as all vertices in the interval \( [x_{k + 1},\ x_{5k + 2}] \), are not used in any \( C_i \). Moreover, only vertices from this interval are used to construct the graphs \( E_j^{(1)} \) and \( E_j^{(2)} \). Hence, each \( C_i \) is disjoint from all subgraphs \( E_j^{(1)} \) and \( E_j^{(2)} \).


In both cases above, we have shown how to construct \( m \)-cycles \( C_1, C_2, \ldots, C_k \), and subgraphs \( E_1^{(1)}, E_2^{(1)}, \ldots, E_k^{(1)} \) and \( E_1^{(2)}, E_2^{(2)}, \ldots, E_k^{(2)} \). Now, for \( i \in \{1, 2, \ldots, k\} \), let \( F_i = C_i \mathbin{\dot{\cup}} \left( E_i^{(1)} \cup E_i^{(2)} \right) \). Since \( F_1, F_2, \ldots, F_k \) jointly contain exactly one edge of each difference in \( S \), we see that
\(
\mathcal{D} = \{ \rho^i(F_1), \rho^i(F_2), \ldots, \rho^i(F_k) : i \in \mathbb{Z}_n \}
\)
is a decomposition of \( K_{\ell[m+2]} \) that satisfies the conditions of Lemma~\ref{lem:Newtool-00}. Applying Lemma~\ref{lem:Newtool-00}, we conclude that the multigraph \( 4K_{\ell[m+2]}^{\bullet} \) admits an HOP \((C_2, C_m)\)-decomposition.
\end{proof}


\subsection{The Main Results for Two Round Tables}


Combining the results of Lemmas~\ref{lem:Newtool-1b} and~\ref{lem:Newtool-T2} with the previous results in Theorem~\ref{theo:(Cm1,Cm2)-dec}, we prove the following main results. We shall restate the main theorem for ease of reference.

\mainresultOne*
\begin{proof}
{\bf{(i)}}  Since $n \equiv 1 \pmod{2m}$, we have $n = 2mk + 1$ for some integer $k \geq 1$. There are three cases to consider.
First, if $m_1 = m_2 = 2$, then $n = 8k + 1$. By Theorem~\ref{theo:Cm-dec}, there exists a $(C_4)$-decomposition of $K_n$. Applying Corollary~\ref{lem:Newtool-01}, it follows that $4K_n^{\bullet}$ admits an HOP $(C_2, C_2)$-decomposition. 
Second, if $m_1 = 2$ and $m_2 \geq 3$, then by Lemma~\ref{lem:Newtool-1b}, the multigraph $4K_n^{\bullet}$ admits an HOP $(C_2, C_{m_2})$-decomposition. 
Third, if $m_1 \geq 3$, then by Theorem~\ref{theo:(Cm1,Cm2)-dec}(i), there exists a $(C_{m_1}, C_{m_2})$-decomposition of $K_n$. Applying Lemma~\ref{lem:Gtool4}, it follows that $4K_n^{\bullet}$ admits an HOP $(C_{m_1}, C_{m_2})$-decomposition. 

\noindent {\bf{(ii)}}  We have two cases to consider. First, if $m_1 = 2$ and $m_2 \geq 3$, then since $m = m_1 + m_2$ is odd, $m_2$ must be odd. It follows from Lemma~\ref{lem:Newtool-T2} that the multigraph $4K_n^{\bullet}$ admits an HOP $(C_2, C_{m_2})$-decomposition. 
Second, if $m_1 \geq 3$, we have two subcases; if $(m_1, m_2, n) \neq (4,5,9)$, then by Theorem~\ref{theo:(Cm1,Cm2)-dec}(ii), there exists a $(C_{m_1}, C_{m_2})$-decomposition of $K_n$, and applying Lemma~\ref{lem:Gtool4}, the multigraph $4K_n^{\bullet}$ admits an HOP $(C_{m_1}, C_{m_2})$-decomposition.  If $(m_1, m_2, n) = (4,5,9)$, then by Lemma~\ref{lems-1starter}(i), an HOP $(C_4, C_5)$-factorization of $4K_9^{\bullet}$ exists. 

In both cases, we see that $4K_n^{\bullet}$ admits an HOP $(C_{m_1}, C_{m_2})$-decomposition. Therefore, by Theorem~\ref{theo:Gtool1}, $\mathrm{HOP}(2^{\langle s \rangle}, 2m_1, 2m_2)$ has a solution.
\end{proof}

%

An easy generalization of the $(C_2, C_m)$-decompositions constructed in Lemmas~\ref{lem:Newtool-1b} and~\ref{lem:Newtool-T2} is to include multiple copies of $C_2$ in the decomposition. This is described below.

\begin{lem}{\label{cor:HOP-decomp}}
Let \( r \geq 1 \), and \( m \geq 3 \) be integers. Then, there exists an HOP $(C_2^{\langle r \rangle}, C_m)$-decomposition of $4K_{n}^{\bullet}$ in each of the following cases:

\begin{enumerate}[\bf(i)]
    \item  \( n \equiv 1 \pmod{2(2r + m)} \)
    \item   $m$ is odd and $n \equiv 2r + m \pmod{2(2r + m)}$
 \end{enumerate}   
\end{lem}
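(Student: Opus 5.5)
The plan is to generalize the difference-based constructions of Lemmas~\ref{lem:Newtool-1b} and~\ref{lem:Newtool-T2}, replacing the single pair of $K_2$'s per starter by $2r$ edges. Everything is then finished with Lemma~\ref{lem:Newtool-00} taking $t=1$ and $\alpha=r$. Lemma~\ref{lem:Newtool-00} asks for a decomposition of the relevant simple graph into copies of $C_m \mathbin{\dot{\cup}} H'$, where $H'$ is an edge-disjoint union of two $1$-regular graphs of order $2r$. So each starter $F_i$ must consist of one $m$-cycle $C_i$ together with $2r$ vertex-disjoint edges, split as $r$ edges $E^{(1)}_{i,1},\dots,E^{(1)}_{i,r}$ and $r$ edges $E^{(2)}_{i,1},\dots,E^{(2)}_{i,r}$. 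The edges inside each group need not avoid one another's vertices across groups; only $C_i$ must be vertex-disjoint from the $E$'s, and each group must itself be a matching. Taking all $2r$ edges pairwise vertex-disjoint is simplest.

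\textbf{Case (i).} Write $n = 2(2r+m)k+1$ and view $K_n$ as $\mathrm{Circ}(n;\pm\{1,\dots,(2r+m)k\})$ with $\rho=(x_0\,x_1\,\cdots\,x_{n-1})$. I would keep the cycles $C_1,\dots,C_k$ of Lemma~\ref{lem:Newtool-1b} essentially as built there, in the four cases $m \bmod 4$. They cover $mk$ differences, all of size at most about $(m+2)k$, and they use vertices only from a window of roughly $\pm(m+2)k/2$ around $x_0$. The remaining $2rk$ differences must then be covered by $2rk$ edges, $2r$ per starter. These are the $2k$ leftover differences from that lemma, which the $E^{(1)}_j,E^{(2)}_j$ there already handle, plus the new block $\{(m+2)k+1,\dots,(2r+m)k\}$ of $2(r-1)k$ consecutive differences. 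The new block can be realized by nested ``symmetric'' edges $x_{-a}x_{b}$ with $a+b=d$, placed on the vertices far from $x_0$ (beyond the window used by the $C_i$), which are free because $n$ is now much larger. For instance, pair differences $(m+2)k+2j-1$ and $(m+2)k+2j$ using edges of the form $x_{-(c+j)}x_{c'+j}$ with $c,c'$ chosen in the unused outer arcs, exactly as the $E^{(1)},E^{(2)}$ were chosen from free intervals in Lemma~\ref{lem:Newtool-1b}. Finally, distribute these $2rk$ edges $2r$ per $F_i$, with $r$ in each group.

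\textbf{Case (ii).} Write $n=(2k+1)(2r+m)$. First decompose $K_n = (2k+1)K_{2r+m} \oplus K_{(2k+1)[2r+m]}$ and handle the two parts separately via Lemma~\ref{Gtool3}.

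For the complete part, I need an HOP $(C_2^{\langle r\rangle},C_m)$-decomposition of $4K_{2r+m}^{\bullet}$, where $2r+m$ is odd. The most direct route is Theorem~\ref{theo:OP4}, Theorem~\ref{theo:(Cm1,...,Cmt)-dec} or Theorem~\ref{theo:Cm-dec}: find a $(C_{2r},C_m)$-factorization of $K_{2r+m}$ and apply Corollary~\ref{lem:Newtool-01}, which turns the even cycle $C_{2r}$ into $r$ copies of $C_2$. This works when $2r\ge 3$ and $(2r,m)\notin\{(4,5)\}$; note that $(3,3)$ cannot occur, since $2r$ is even. The exceptions need separate treatment. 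When $r=1$, use Lemma~\ref{lems-1starter}(ii),(iii). When $(2r,m)=(4,5)$, use Lemma~\ref{lems-1starter}(i), namely the $(C_2,C_2,C_5)$-factorization of $4K_9^{\bullet}$. For the remaining case $m<2r$ with $m\ge 3$, Theorem~\ref{theo:OP4} still applies after reordering the cycle lengths.

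For the equipartite part $K_{\ell[2r+m]}$, $\ell=2k+1$, I would follow Lemma~\ref{lem:Newtool-T2}. Keep its $k$ cycles, which avoid multiples of $\ell$, and cover the additional differences $\{\frac{m+3}{2}\ell+1,\dots,\frac{2r+m-1}{2}\ell\}$, excluding multiples of $\ell$, by extra edges placed on free vertices far from $x_0$. That is $2(r-1)k$ new differences, giving exactly $2r$ edges per starter.

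The main obstacle is the bookkeeping in the equipartite case. The cycles of Lemma~\ref{lem:Newtool-T2} were tailored to the old difference set, in which, for example, the blue closing edges hit differences near $\frac{m+1}{2}\ell$. So I must check that the new large differences, which skip the multiples of $\ell$, can be grouped into nested disjoint edges on vertex intervals untouched by the $C_i$ and by the old $E$-edges. I would first try a uniform pattern: for each block between consecutive multiples of $\ell$, take edges $x_{-(a+j)}x_{b+j}$, $j=1,\dots,\ell-1$, with fresh offsets $a,b$ for each block, and then verify disjointness by interval comparison as in the paper.
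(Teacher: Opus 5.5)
There is a genuine gap for $r\ge 2$. You never actually produce the starters for case (i) or for $K_{(2k+1)[2r+m]}$. Moreover, the premise your sketch rests on is false.

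You assume that the cycles of Lemma~\ref{lem:Newtool-1b} (resp.\ Lemma~\ref{lem:Newtool-T2}) cover the same differences when the modulus grows from $2(m+2)k+1$ to $2(2r+m)k+1$ (resp.\ from $(m+2)\ell$ to $(2r+m)\ell$). Then the leftovers would be the old leftovers plus the new top block. But those constructions obtain many of their differences by wrap-around modulo the old $n$. The paths $Q_i$ alternate between vertices $x_a$ and $x_{-b}$ with $a+b>\frac{n-1}{2}$, so their edges have difference $n-(a+b)$. For example, the edge $x_{-(\frac{m+4}{2}k+i)}x_{\frac{m+4}{2}k+i}$ in Case~1 has difference $mk-2i+1$ only because $n=2(m+2)k+1$. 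The same holds for the blue edges and the closing difference $d$ in Cases~3 and~4, for $E^{(2)}_j$ in Cases~3 and~4, and for the $Q_i$-edges and blue edges of Lemma~\ref{lem:Newtool-T2}.

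Already for $m=4$, $r=2$, $k=1$ this breaks. The kept cycle $x_0x_1x_{-5}x_5x_0$ has differences $\{1,5,6,7\}$ in $K_{17}$ rather than $\{1,3,5,6\}$. Together with the old $E$-edges (differences $2,4$), the uncovered differences are $\{3,8\}$, not $\{7,8\}$, so your recipe would repeat $7$ and miss $3$.

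Your new block in (ii) is also miscounted. The set $\{\frac{m+3}{2}\ell+1,\dots,\frac{2r+m-1}{2}\ell\}$ minus multiples of $\ell$ has only $2(r-2)k$ elements. The correct block is $\{\frac{m+1}{2}\ell+k+1,\dots,\frac{2r+m-1}{2}\ell+k\}$ minus multiples of $\ell$.

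Placing the extra edges is actually the easy part. The $2r$ edges of $F_i$ only need to avoid $C_i$ and each other within a matching, and $n>2(m+2r-2)$, so a greedy choice works. But the distinctness and leftover bookkeeping for the kept cycles would have to be redone from scratch in every subcase, and you have not done it.

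More importantly, none of this is needed. For $r=1$ the statement is exactly Lemmas~\ref{lem:Newtool-1b} and~\ref{lem:Newtool-T2}. For $r\ge 2$ both cycle lengths $2r$ and $m$ are at least $3$, so Theorem~\ref{theo:(Cm1,Cm2)-dec} directly gives a $(C_{2r},C_m)$-decomposition of all of $K_n$:
\begin{itemize}
\item in case (i), always;
\item in case (ii), whenever $(2r,m,n)\neq(4,5,9)$, and this exception cannot occur once $n=(2k+1)(2r+m)$ with $k\ge 1$.
\end{itemize}
Corollary~\ref{lem:Newtool-01} then turns such a decomposition into an HOP $(C_2^{\langle r\rangle},C_m)$-decomposition of $4K_n^{\bullet}$. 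This is the very tool you already apply to $K_{2r+m}$, and it is the paper's proof. The split $K_n=(2k+1)K_{2r+m}\oplus K_{(2k+1)[2r+m]}$ and new difference constructions are only required for $r=1$, where the paper already has them. The case $n=2r+m$ is handled as you propose, via Theorem~\ref{theo:OP4}, or Lemma~\ref{lems-1starter}(i) for $(2r,m)=(4,5)$.
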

\begin{proof}
\noindent {\bf{(i)}}
Assume $n \equiv 1 \pmod{2(m + 2r)}$. 
By Lemma~\ref{lem:Newtool-1b}, there exists an HOP $(C_2, C_m)$-decomposition of $4K_n^{\bullet}$. Hence, we may assume $r \geq 2$, then since $n \equiv 1 \pmod{2(2r+m)}$, by Theorem~\ref{theo:(Cm1,Cm2)-dec}(i) there exists a $(C_{2r}, C_m)$-decomposition of $K_n$. Applying Corollary~\ref{lem:Newtool-01}, we obtain an HOP $(C_2^{\langle r \rangle}, C_m)$-decomposition of $4K_n^{\bullet}$. 

\vspace{0.2cm}

\noindent {\bf{(ii)}} Assume $m$ is odd and $n \equiv 2r+m  \pmod{2(2r+m)}$. Then $n = (2k + 1)(2r+m)$ for some integer $k \geq 0$.

First, assume $k = 0$; then $n = m + 2r$.  We have three cases to consider. If $r = 1$, then by Lemma~\ref{lems-1starter}, the multigraph $4K_{m+2}^{\bullet}$ admits an HOP $(C_2, C_m)$-factorization. Second, if $r = 2$ and $m = 5$, then by Lemma~\ref{lems-1starter}(i), the multigraph $4K_9^{\bullet}$ admits an HOP $(C_2^{\langle 2 \rangle}, C_5)$-factorization. Third, if $r \geq 2$ and $m \neq 5$, then by Theorem~\ref{theo:OP4}, there exists a $2$-factorization of $K_{m+2r}$ into $(C_{2r}, C_m)$-factors. Applying Corollary~\ref{lem:Newtool-01}, we see that the multigraph $4K_{m+2r}^{\bullet}$ admits an HOP $(C_2^{\langle r \rangle}, C_m)$-factorization.

Now, assume $k \geq 1$. There are three cases to consider. If $r = 1$, then $n = (2k+1)(m+2)$, and by Lemma~\ref{lem:Newtool-T2}, there exists an HOP $(C_2, C_m)$-decomposition of $4K_n^{\bullet}$. Second, if $r = 2$ and $(m,n) = (5,9)$, then by Lemma~\ref{lems-1starter}(i), the multigraph $4K_9^{\bullet}$ admits an HOP $(C_2^{\langle 2 \rangle}, C_5)$-factorization. Third, if $r \geq 2$ and $(m,n) \neq (5,9)$, then by Theorem~\ref{theo:(Cm1,Cm2)-dec}(ii), the complete graph $K_n$ admits a $(C_{2r}, C_m)$-decomposition. Applying Corollary~\ref{lem:Newtool-01}, it follows that there exists an HOP $(C_2^{\langle r \rangle}, C_m)$-decomposition of $4K_n^{\bullet}$.    
\end{proof}

The following corollary is a direct consequence of Lemma~\ref{cor:HOP-decomp} and Theorem~\ref{theo:Gtool1}.

\begin{cor}{\label{corlem:HOP-decomp}}
Let \( s \geq 0 \), \( r \geq 1 \), and \( m \geq 3 \) be integers, and let \( n = s + 2r + m \). Then \( \mathrm{HOP}(2^{\langle s \rangle}, 4^{\langle r \rangle}, 2m) \) has a solution in each of the following cases:
\begin{enumerate}[\bf(i)]
    \item  \( n \equiv 1 \pmod{2(2r + m)} \)
    \item   $m$ is odd and $n \equiv 2r + m \pmod{2(2r + m)}$
 \end{enumerate}   
\end{cor}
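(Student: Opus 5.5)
The corollary follows by translating Lemma~\ref{cor:HOP-decomp} through the equivalence of Theorem~\ref{theo:Gtool1}. Set $m_1=\dots=m_r=2$ and $m_{r+1}=m$. Then the round tables in $\mathrm{HOP}(2^{\langle s\rangle},4^{\langle r\rangle},2m)$ have sizes $2m_1,\dots,2m_{r+1}$, and all $m_i\ge 2$. The ordering $m_1\le\dots\le m_{r+1}$ holds because $m\ge 3$. Moreover $n=s+\sum_i m_i=s+2r+m$, which is exactly the $n$ of the statement. So Theorem~\ref{theo:Gtool1} applies with $t=r+1$. It says that $\mathrm{HOP}(2^{\langle s\rangle},4^{\langle r\rangle},2m)$ has a solution if and only if $4K_n^{\bullet}$ admits an HOP $(C_2^{\langle r\rangle},C_m)$-decomposition.

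In case (i), $n\equiv 1 \pmod{2(2r+m)}$, and Lemma~\ref{cor:HOP-decomp}(i) supplies an HOP $(C_2^{\langle r\rangle},C_m)$-decomposition of $4K_n^{\bullet}$. In case (ii), $m$ is odd and $n\equiv 2r+m \pmod{2(2r+m)}$, and Lemma~\ref{cor:HOP-decomp}(ii) supplies the same object. In both cases the converse direction of Theorem~\ref{theo:Gtool1} then lifts each $(C_2^{\langle r\rangle},C_m)$-subgraph to an $I$-alternating $(K_2^{\langle s\rangle},C_4^{\langle r\rangle},C_{2m})$-factor, and these factors together form the required solution.

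Only one point needs care: the hypotheses of the two results must match exactly. Theorem~\ref{theo:Gtool1} is stated for $2\le m_1\le\dots\le m_t$ and an arbitrary $s\ge 0$. Lemma~\ref{cor:HOP-decomp} fixes $n$ only through its congruence, and $s=n-2r-m$ is then determined by $n$. We need $s\ge 0$. This holds automatically, since in case (i) $n\ge 2(2r+m)+1$ (because $n\equiv 1$ and $n\ge 2r+m$), and in case (ii) $n\ge 2r+m$. Beyond this bookkeeping, the corollary is immediate.
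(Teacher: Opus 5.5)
Your proposal is correct and matches the paper's argument: the corollary is obtained directly from Lemma~\ref{cor:HOP-decomp} via the equivalence in Theorem~\ref{theo:Gtool1}, applied with $m_1=\dots=m_r=2$ and $m_{r+1}=m$. Your $s\ge 0$ bookkeeping is harmless but unnecessary, since $s\ge 0$ is already a hypothesis of the statement.
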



\section{The Generalized HOP with Small Round Tables}{\label{sec:6}}

We restate our second main theorem here for ease of reference. The proof follows from existing work on decompositions of $K_n$ into $(C_{m_1}, C_{m_2}, \ldots, C_{m_t})$-subgraphs for $m_1, m_2, \ldots, m_t \geq 3$. However, we develop new tools to handle the cases where $2 \in \{m_1, m_2, \ldots, m_t\}$.
We proceed case by case, depending on the value of $m = m_1 + \cdots + m_t$, where $2 \leq m_1 \leq \cdots \leq m_t$ and $m \leq 10$.

\mainresultTwo*
\begin{proof} 
The proof is summarized in Tables~\ref{tab:version1} and~\ref{tab:version2}.
Note that for the cases highlighted in pink, Theorems~\ref{theo:Cm-dec} and \ref{theo:(Cm1,...,Cmt)-dec} are used to show that \( K_n \) admits a \( (C_{m_1}, C_{m_2}, \ldots, C_{m_t}) \)-decomposition. For these cases, we then apply Theorem~\ref{lem:Gtool4} to show that \( 4K_n^{\bullet} \) admits an HOP \( (C_{m_1}, C_{m_2}, \ldots, C_{m_t}) \)-decomposition. 

For the remaining cases, we combine the results Lemmas~\ref{cor:HOP-decomp} and \ref{lem:m=46-Samll0}--\ref{lem:m=10-Samll5} to show that \( 4K_n^{\bullet} \) admits an HOP \( (C_{m_1}, C_{m_2}, \ldots, C_{m_t}) \)-decomposition.

Tables~\ref{tab:version1} and \ref{tab:version2} list all possible cases where \( n(n - 1) \equiv 0 \pmod{2m} \) with \( m \leq 10 \) and \( n \) odd.  
Observe that in each case presented in the table, the multigraph \( 4K_n^{\bullet} \) admits an HOP \( (C_{m_1}, C_{m_2}, \ldots, C_{m_t}) \)-decomposition. Applying Theorem~\ref{theo:Gtool1}, we conclude that HOP \( (2^{\langle s \rangle}, 2m_1, \allowbreak \ldots, 2m_t) \) has a solution. 

Note that in the tables, when a row of the 3rd and 4th columns straddles two rows of column~2, then these decompositions may fall under both cases.
\end{proof}

\begin{table}[p]
            \small
            \renewcommand{\arraystretch}{1.37} 
            \begin{center}
              \begin{tabular}{V{3.5} p{0.33cm} V{3.5} p{2.3cm} V{4.5} p{3.5cm} V{3.5} p{7.9cm} V{3.5}}   
\Xhline{1.5pt}
\rowcolor{Gray}
 {\boldmath $m$} & \centering {\boldmath $n$} & \centering {\boldmath$ ({m_1},\ldots, {m_t})$} & \textbf{{\boldmath$4K_n^{\bullet}$} admits an HOP {\boldmath$(C_{m_1},\ldots, C_{m_t})$}\textbf{-decomposition} by}\\ \cline{1-4}
 \Xhline{1.5pt}  
 \multirow{1}{*}{2}
  & \multirow{1}{*}{$4k+1$, $k\geq 1$} & \centering $(2)$ & Lemma \ref{G4C} with $t=1$  \\ \cline{2-4}
 \Xhline{1.5pt} 
\multirow{1}{*}{3}
  & \multirow{1}{*}{$6k+1$, $k\geq 1$ } & \centering \cellcolor{PINK}{$(3)$}& \cellcolor{PINK}{Theorems~\ref{theo:Cm-dec}} and \ref{lem:Gtool4} \\  \clineB{2-2}{2}
\multirow{1}{*}{}
  & \multirow{1}{*}{$6k+3$, $k\geq 0$ } & \cellcolor{PINK}{ }& \cellcolor{PINK}{}  \\ \cline{2-4}
  \Xhline{1.5pt}
  \multirow{3}{*}{4} & \multirow{3}{*}{$8k+1$, $k\geq 1$} &\centering $(2, 2)$  & Lemma \ref{lem:m=46-Samll0}(i)   \\ \hhline{~~--}
   \multirow{3}{*}{} & \multirow{3}{*}{}  & \centering \cellcolor{PINK}{$(4)$}  &\cellcolor{PINK}{Theorems \ref{theo:Cm-dec}  and \ref{lem:Gtool4}}  \\  \cline{3-4}
 \Xhline{1.5pt}  
  \multirow{3}{*}{} & \multirow{3}{*}{$10k+1$, $k\geq 1$} & \centering $(2, 3)$  & Theorem~\ref{cor:HOP-decomp}(i) with $m=3$, $r=1$  \\ \hhline{~~--}
  \multirow{3}{*}{5} & \multirow{3}{*}{}& \centering \cellcolor{PINK}{ $(5)$}  & \cellcolor{PINK}{ Theorems \ref{theo:Cm-dec} and \ref{lem:Gtool4}} \\ \clineB{2-2}{3.5}
  \multirow{3}{*}{} & \multirow{3}{*}{$10k+5$, $k\geq 0$} & \cellcolor{PINK}{} &\cellcolor{PINK}{}  \\ \cline{3-4}
  \multirow{3}{*}{} & \multirow{3}{*}{}&\centering$(2, 3)$  & Theorem~\ref{cor:HOP-decomp}(ii) with $m=3$, $r=1$   \\ \cline{2-4}
 \Xhline{1.5pt}
  \multirow{3}{*}{} & \multirow{3}{*}{$12k+1$, $k\geq 1$} & \centering $(2, 2, 2)$  & Lemma \ref{lem:m=46-Samll0}(ii)   \\ \cline{3-4}
   \multirow{3}{*}{6} & \multirow{3}{*}{}  & \centering $(2, 4)$  & Theorem~\ref{cor:HOP-decomp}(i) with $m=4$, $r=1$  \\  \hhline{~~--}
  \multirow{3}{*}{} & \multirow{3}{*}{}&  \centering \cellcolor{PINK}{ $(3, 3)$}  &  \cellcolor{PINK}{Theorem \ref{theo:(Cm1,...,Cmt)-dec} if $n\neq9$ and Theorem~\ref{lem:Gtool4} }  \\ \clineB{2-2}{3.5}
  \multirow{3}{*}{} & \multirow{3}{*}{$12k+9$, $k\geq 0$} &\centering  \cellcolor{PINK}{$(6)$} &  \cellcolor{PINK}{ Theorems \ref{theo:Cm-dec} and \ref{lem:Gtool4}}\\ \cline{3-4}
   \multirow{3}{*}{} & \multirow{3}{*}{}&\centering $(3, 3)$  & Lemma \ref{lem:m=6-Samll2} if $n=9$ \\ \cline{3-4}
  \multirow{3}{*}{} & \multirow{3}{*}{}& \centering $(2, 2, 2)$  & Lemma \ref{lem:m=46-Samll0}(ii)    \\ \cline{3-4} 
  \multirow{3}{*}{} & \multirow{3}{*}{}&\centering $(2, 4)$  & Lemma \ref{lem:m=6-Samll1}  \\ \cline{2-4} 
 \Xhline{1.5pt}
  \multirow{3}{*}{} & \multirow{3}{*}{$14k+1$, $k\geq 1$} & \centering $(2, 2, 3)$  & Theorem~\ref{cor:HOP-decomp}(i) with $m=3$, $r=2$   \\ \cline{3-4}
   \multirow{3}{*}{7} & \multirow{3}{*}{}  & \centering $(2, 5)$  & Theorem~\ref{cor:HOP-decomp}(i) with $m=5$, $r=1$  \\  \hhline{~~--}
  \multirow{3}{*}{} & \multirow{3}{*}{}&  \centering \cellcolor{PINK}{$(3, 4)$}  &  \cellcolor{PINK}{Theorems \ref{theo:(Cm1,...,Cmt)-dec} and \ref{lem:Gtool4}}  \\ \clineB{2-2}{3.5}
  \multirow{3}{*}{} & \multirow{3}{*}{$14k+7$, $k\geq 0$} & \centering \cellcolor{PINK}{$(7)$} &  \cellcolor{PINK}{ Theorems \ref{theo:Cm-dec} and \ref{lem:Gtool4}}  \\ \cline{3-4}
  \multirow{3}{*}{} & \multirow{3}{*}{}& \centering $(2, 2, 3)$  & Theorem~\ref{cor:HOP-decomp}(ii) with $m=3$, $r=2$   \\ \cline{3-4} 
  \multirow{3}{*}{} & \multirow{3}{*}{}&\centering $(2, 5)$  & Theorem~\ref{cor:HOP-decomp}(ii) with $m=5$, $r=1$  \\ \cline{2-4}
 \Xhline{1.5pt}
  \multirow{3}{*}{} & \multirow{3}{*}{} &\centering $(2, 2, 2, 2)$  & Lemma \ref{lem:m=8-Samll0}  \\ \cline{3-4}
  \multirow{3}{*}{8} & \multirow{3}{*}{$16k+1$, $k\geq 1$}  & \centering $(2, 2,4)$  &Theorem~\ref{cor:HOP-decomp}(i) with $m=4$, $r=2$  \\  \cline{3-4} 
  \multirow{3}{*}{} & \multirow{3}{*}{}  & \centering $(2, 3,3)$  &Lemma \ref{lem:m=8-Samll1}   \\  \cline{3-4} 
  \multirow{3}{*}{} & \multirow{3}{*}{}  & \centering $(2, 6)$  &Theorem~\ref{cor:HOP-decomp}(i) with $m=6$, $r=1$  \\  \hhline{~~--}
  \multirow{3}{*}{} & \multirow{3}{*}{}  & \centering \cellcolor{PINK}{$(3, 5)$, $(4, 4)$}  & \cellcolor{PINK}{Theorems \ref{theo:(Cm1,...,Cmt)-dec} and ~\ref{lem:Gtool4} }   \\  \cline{3-4} 
  \multirow{3}{*}{} & \multirow{3}{*}{}  &\centering  \cellcolor{PINK}{$(8)$}  & \cellcolor{PINK}{Theorems \ref{theo:Cm-dec} and ~\ref{lem:Gtool4} }   \\  \cline{3-4} 
 \Xhline{1.5pt}    
\end{tabular}
                \caption{HOP $(C_{m_1},\ldots, C_{m_t})$-decompositions with $m_1+\ldots+m_t\leq 8$}
                \label{tab:version1}
            \end{center}
\end{table}

\begin{table}[p]
            \small
            \renewcommand{\arraystretch}{1.37} 
            \begin{center}
              \begin{tabular}{V{3.5} p{0.33cm} V{3.5} p{2.3cm} V{4.5} p{3.5cm} V{3.5} p{7.9cm} V{3.5}}   
\Xhline{1.5pt}
\rowcolor{Gray}
 {\boldmath $m$} & \centering {\boldmath $n$} & \centering {\boldmath$ ({m_1},\ldots, {m_t})$} & \textbf{{\boldmath$4K_n^{\bullet}$} admits an HOP {\boldmath$(C_{m_1},\ldots, C_{m_t})$}\textbf{-decomposition} by}\\ \cline{1-4}
 \Xhline{1.5pt}  
  \multirow{3}{*}{} & \multirow{3}{*}{$18k+1$, $k\geq 1$} & \centering $(2, 2, 2, 3)$  & Theorem~\ref{cor:HOP-decomp}(i) with $m=3$, $r=3$   \\ \cline{3-4}
  \multirow{3}{*}{} & \multirow{3}{*}{} & \centering $(2, 2, 5)$  & Theorem~\ref{cor:HOP-decomp}(i) with $m=5$, $r=2$   \\ \cline{3-4}
  \multirow{3}{*}{} & \multirow{3}{*}{} & \centering $(2, 3, 4)$  & Lemma \ref{lem:m=9-Samll0}   \\ \cline{3-4}
   \multirow{3}{*}{} & \multirow{3}{*}{} & \centering $(2, 7)$  & Theorem~\ref{cor:HOP-decomp}(i) with $m=7$, $r=1$   \\ \hhline{~~--}
  \multirow{3}{*}{9} & \multirow{3}{*}{}&\centering  \cellcolor{PINK}{$(3, 3, 3)$, $(3, 6)$, $(4, 5)$}  & \cellcolor{PINK}{Theorem \ref{theo:(Cm1,...,Cmt)-dec} {\footnotesize{(except for  $(C_4, C_5)$-decomp when $n= 9$)}} and Theorem~\ref{lem:Gtool4} } \\ \clineB{2-2}{3.5}
  \multirow{3}{*}{} & \multirow{3}{*}{$18k+9$, $k\geq 0$} & \centering \cellcolor{PINK}{$(9)$} & \cellcolor{PINK}{Theorems \ref{theo:Cm-dec} and \ref{lem:Gtool4}} \\ \cline{3-4}
  \multirow{3}{*}{} & \multirow{3}{*}{}&\centering $(4, 5)$  & Lemma \ref{lems-1starter}(i) if $n= 9$ \\ \cline{3-4}
  \multirow{3}{*}{} & \multirow{3}{*}{} & \centering $(2, 2, 2,  3)$  & Theorem~\ref{cor:HOP-decomp}(ii) with $m=3$, $r=3$   \\ \cline{3-4}
  \multirow{3}{*}{} & \multirow{3}{*}{} & \centering $(2, 2, 5)$  & Theorem~\ref{cor:HOP-decomp}(ii) with $m=5$, $r=2$   \\ \cline{3-4} 
  \multirow{3}{*}{} & \multirow{3}{*}{} &\centering $(2, 3, 4)$  & Lemma \ref{lem:m=9-Samll1}   \\ \cline{3-4}
  \multirow{3}{*}{} & \multirow{3}{*}{}&\centering $(2, 7)$  & Theorem~\ref{cor:HOP-decomp}(ii) with $m=7$, $r=1$  \\ \cline{2-4} 
 \Xhline{1.5pt}
  \multirow{3}{*}{} & \multirow{3}{*}{} & \centering $(2, 2, 2, 2, 2)$  & Lemma \ref{lem:m=10-Samll0}(\ref{lem:m=10-Samll0-1})   \\ \cline{3-4}
 \multirow{3}{*}{} & \multirow{3}{*}{$20k+1$, $k\geq 1$} & \centering $(2, 2, 2, 4)$  & Theorem~\ref{cor:HOP-decomp}(i) with $m=4$, $r=3$   \\ \cline{3-4}
 \multirow{3}{*}{} & \multirow{3}{*}{} & \centering $(2, 2, 3, 3)$  & Lemma \ref{lem:m=10-Samll0}(\ref{lem:m=10-Samll0-3}) \\ \cline{3-4}
 \multirow{3}{*}{} & \multirow{3}{*}{} & \centering $(2, 2, 6)$  & Theorem~\ref{cor:HOP-decomp}(i) with $m=6$, $r=2$   \\ \cline{3-4}
 \multirow{3}{*}{} & \multirow{3}{*}{} & \centering $(2, 3, 5)$  & Lemma \ref{lem:m=10-Samll2}  \\ \cline{3-4}
 \multirow{3}{*}{} & \multirow{3}{*}{} & \centering $(2, 4, 4)$  & Lemma \ref{lem:m=10-Samll1}   \\ \cline{3-4}
 \multirow{3}{*}{} & \multirow{3}{*}{} & \centering $(2, 8)$  & Theorem~\ref{cor:HOP-decomp}(i) with $m=8$, $r=1$   \\ \hhline{~~--} 
  \multirow{3}{*}{10} & \multirow{3}{*}{}& \centering \cellcolor{PINK}{  $(3, 3, 4)$, $(3, 7)$, $(4, 6)$, $(5, 5)$}  &  \cellcolor{PINK}{Theorems \ref{theo:(Cm1,...,Cmt)-dec} and \ref{lem:Gtool4} }  \\ \clineB{2-2}{3.5}
   \multirow{3}{*}{} & \multirow{3}{*}{} & \centering \cellcolor{PINK}{$(10)$}  & \cellcolor{PINK}{Theorems \ref{theo:Cm-dec} and \ref{lem:Gtool4}}  \\ \cline{3-4}
  \multirow{3}{*}{} & \multirow{3}{*}{$20k+5$, $k\geq 1$} &\centering  $(2, 2, 2, 2, 2)$, $(2, 2, 2, 4)$, $(2, 2, 3, 3)$, $(2, 2, 6)$  & Lemma \ref{lem:m=10-Samll0}    \\ \cline{3-4}
 \multirow{3}{*}{} & \multirow{3}{*}{} & \centering $(2, 3, 5)$  & Lemma \ref{lem:m=10-Samll5}    \\ \cline{3-4}
 \multirow{3}{*}{} & \multirow{3}{*}{} & \centering $(2, 4, 4)$  & Lemma \ref{lem:m=10-Samll4}   \\ \cline{3-4}
 \multirow{3}{*}{} & \multirow{3}{*}{} & \centering $(2, 8)$  & Lemma \ref{lem:m=10-Samll3}  \\ \cline{2-4} 
 \Xhline{1.5pt}
 
 \end{tabular}
                \caption{HOP $(C_{m_1},\ldots, C_{m_t})$-decompositions with $m_1+\ldots+m_t\in \{9,10\}$}
                \label{tab:version2}
            \end{center}
\end{table}
\subsection{HOP Decompositions for $n\equiv 1\ ({\rm mod}\ 8)$ and $n\equiv 1 \text{ \normalfont{or} } 9\ ({\rm mod}\ 12)$}

\begin{lem} \label{lem:m=46-Samll0}
The following decompositions of $4K_n^{\bullet}$ exist:
\begin{enumerate}[\bf(i)]
    \item an HOP $(C_2, C_2)$-decomposition when $n \equiv 1 \pmod{8}$;
    \item an HOP $(C_2, C_2, C_2)$-decomposition when $n \equiv 1, 9 \pmod{12}$.
\end{enumerate}
\end{lem}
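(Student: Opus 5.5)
Both parts reduce to Lemma~\ref{G4C}: an HOP $(C_2^{\langle t\rangle})$-decomposition of $4K_n^{\bullet}$ exists as soon as $K_n$ decomposes into $1$-regular subgraphs of order $2t$, that is, into matchings with $t$ edges. So the plan is to find a decomposition of $K_n$ into $2$-matchings when $n\equiv 1 \pmod 8$, and into $3$-matchings when $n\equiv 1,9 \pmod{12}$.

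For \textbf{(i)}, write $n=8k+1$. By Theorem~\ref{theo:Cm-dec}, $K_n$ has a $(C_4)$-decomposition, since $n$ is odd and $4$ divides $n(n-1)/2=4k(8k+1)$. Each $4$-cycle splits into two perfect matchings on its four vertices, each with $2$ edges. This gives a decomposition of $K_n$ into $1$-regular subgraphs of order $4$, and Lemma~\ref{G4C} with $t=2$ finishes the case. Equivalently, one can apply Corollary~\ref{lem:Newtool-01} with the single even cycle $C_4$.

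For \textbf{(ii)}, we have $n\equiv 1,9\pmod{12}$, so $n$ is odd and $6$ divides $n(n-1)/2$. Theorem~\ref{theo:Cm-dec} then gives a $(C_6)$-decomposition of $K_n$. Note that $6\le n$, since the smallest admissible value is $n=9$. Each $6$-cycle splits into two perfect matchings of size $3$ on its six vertices, which gives a decomposition of $K_n$ into $1$-regular subgraphs of order $6$. Lemma~\ref{G4C} with $t=3$, or Corollary~\ref{lem:Newtool-01} with the single cycle $C_6$, yields the HOP $(C_2,C_2,C_2)$-decomposition.

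The only point needing care is the hypotheses of Theorem~\ref{theo:Cm-dec}: $n$ odd, $m\le n$, and $m$ dividing $\binom n2$. These hold in both congruence classes; for example, for $n=12k+9$ we have $\binom n2=(12k+9)(6k+4)$, which is divisible by $6$ because $12k+9$ is divisible by $3$ and $6k+4$ is even. No other obstacle is expected.
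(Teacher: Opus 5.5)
Your proposal is correct and follows essentially the same route as the paper, which obtains a $(C_4)$-decomposition (for $n\equiv 1 \pmod 8$) and a $(C_6)$-decomposition (for $n\equiv 1,9 \pmod{12}$) of $K_n$ from Theorem~\ref{theo:Cm-dec} and then applies Corollary~\ref{lem:Newtool-01}. Your primary route, splitting each even cycle into two matchings and applying Lemma~\ref{G4C} directly, is the same idea and avoids the degenerate case of Corollary~\ref{lem:Newtool-01} in which no odd cycles remain.
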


\begin{proof}
By Theorem~\ref{theo:Cm-dec}, there exists a $(C_4)$-decomposition of $K_n$ when $n \equiv 1 \pmod{8}$ and a $(C_6)$-decomposition of $K_n$ when $n \equiv 1, 9 \pmod{12}$. The results follow by applying Corollary~\ref{lem:Newtool-01}. 
\end{proof}

\begin{lem}{\label{lem:m=6-Samll1}}
Let $n=12k+9$ for any integer $k\geq 0$. There exists an HOP $(C_2,C_4)$-decomposition of $4K_n^{\bullet}$. 
\end{lem}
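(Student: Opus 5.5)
The plan is to apply Lemma~\ref{lem:4Kn-d} with $n=12k+9$, so $n$ is odd and $n-1=12k+8$. The vertex set is $\{x_i : i\in\ZZ_{12k+8}\}\cup\{x_\infty\}$ and $\rho=(x_\infty)(x_0\ x_1\ \ldots\ x_{12k+7})$. I will write down explicit base $(C_2,C_4)$-subgraphs $F_1,\ldots,F_s$ of $4K_n^{\bullet}$ satisfying \textbf{(B1)}--\textbf{(B3)}; the lemma then yields the required HOP $(C_2,C_4)$-decomposition directly.

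\emph{Counting.} An HOP $(C_2,C_4)$-decomposition of $4K_n^{\bullet}$ consists of $2n(n-1)/6=n(n-1)/3$ subgraphs. In Lemma~\ref{lem:4Kn-d}, each of $F_1,\ldots,F_{s-1}$ has $n-1$ translates, and $F_s$ together with $F_s'$ also contributes $n-1$ subgraphs in total. Hence $s(n-1)=n(n-1)/3$, so $s=4k+3$. As a consistency check, the base subgraphs must contain $6(4k+3)=24k+18$ edges: two black arcs of the diameter difference $6k+4$ forming the $2$-cycle in $F_s$, as \textbf{(B2)} requires, plus one edge from each remaining orbit listed in the proof of Lemma~\ref{lem:4Kn-d}. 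I will verify this tally before constructing anything.

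\emph{Construction.} Most $2$-cycles are easy: a $2$-cycle consisting of a pink and a blue copy of $x_ix_{i+d}$ satisfies \textbf{(C1)}, as does a directed black $2$-cycle formed by $(x_i,x_{i+d})$ and $(x_{i+d},x_i)$. These absorb pairs of orbits of differences $d$ and $n-1-d$ (in the black case), or the pink and blue orbits of a single $d$. The $4$-cycles must carry the remaining orbits. For each of them I will choose a colouring pattern that satisfies \textbf{(C1)}, such as all four edges black and consistently oriented, or a blue--black--pink--black pattern with the black arcs oriented correctly relative to the blue and pink edges. I will then pick vertices so that the four differences (signed, for black arcs) are distinct and fill the orbits not yet used. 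Concretely, I will imitate the path constructions $P_i,Q_i$ of Lemma~\ref{lem:Newtool-1b}, that is, zig-zag vertex sequences $x_{-i},x_{i},\ldots$ whose consecutive differences run through arithmetic progressions. This gives families of $4$-cycles indexed by $i\in\{1,\ldots,k\}$, plus a bounded number of special cycles handling $\infty$, the diameter, and leftovers. The $C_2$ in each $F_j$ must be vertex-disjoint from its $C_4$, which I will arrange by placing the $2$-cycles on an interval of vertices not used by the $4$-cycles, as was done with $E_j^{(1)}$ and $E_j^{(2)}$ earlier.

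\emph{Main obstacle.} The hardest step is the bookkeeping of black arc orbits. Differences $d$ and $n-1-d$ give distinct black orbits, and the two infinity black orbits have specified directions. So the $4$-cycles must realize exactly one arc from each directed difference class while keeping consistent orientation along each cycle, which forces the signed differences around an all-black $4$-cycle to sum to $0$ modulo $n-1$. I would first try all-black $4$-cycles of the form $x_a\to x_b\to x_c\to x_d\to x_a$ with signed differences $(d_1,d_2,-d_3,-d_4)$ and $d_1+d_2=d_3+d_4$, pairing large and small differences. I would use the mixed blue/pink $4$-cycles only for the pink and blue orbits. Finally, I would treat $k=0$ (that is, $n=9$ with $s=3$ base subgraphs) separately by an explicit hand-checked example if the general pattern degenerates there.
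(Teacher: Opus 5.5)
\textbf{There is a genuine gap: your proposal never constructs the base subgraphs.} Your framework is right. Applying Lemma~\ref{lem:4Kn-d} with $s=4k+3$ is exactly what the paper does, and your edge tally of $24k+18$ is correct. Your observation that a \textbf{(C1)}-admissible $2$-cycle is either a pink--blue pair or a directed black pair on one difference is also correct.

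But this lemma is an existence-by-construction statement, and no base subgraphs $F_1,\dots,F_{4k+3}$ are exhibited. Consequently none of \textbf{(B1)}--\textbf{(B3)} is verified, nor is the vertex-disjointness of the $2$-cycle and the $4$-cycle inside each $F_j$. Phrases such as ``I will pick vertices so that\ldots'' and ``I will imitate the zig-zag paths of Lemma~\ref{lem:Newtool-1b}'' describe what a proof must do; they are not a proof. Your proposed shape uses all-black and pink--blue--pink--blue $4$-cycles, each meeting four distinct differences. It then requires explicit difference families for every $k$ satisfying $2a+\beta=2b+\pi=6k+4$, where $a,b$ count the two $4$-cycle types and $\beta,\pi$ count the black and pink/blue $2$-cycles. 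Producing those families is precisely the content of the lemma, and the proposal leaves it open.

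\textbf{The missing idea.} The paper drops the requirement that a $4$-cycle use four distinct differences and uses $4$-cycles invariant under the half-turn $\rho^{6k+4}$. For $1\le i\le 2k+1$ it takes $C_i=x_0\,x_i\,x_{6k+4}\,x_{-(6k+4-i)}\,x_0$, which uses differences $i$ and $6k+4-i$ twice each. It pairs $C_i$ with $E_i=x_{3k+2}x_{5k+3+i}$, or with $x_{3k+2}x_\infty$ when $i=k+1$.

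Each $G_i=C_i\mathbin{\dot{\cup}}E_i$ is used twice:
\begin{itemize}
\item once with $x_0x_i$ pink, $x_{6k+4}x_{-(6k+4-i)}$ blue, the other two edges black (oriented away from pink, toward blue), and $E_i$ pink/blue;
\item once with $x_ix_{6k+4}$ pink, $x_{-(6k+4-i)}x_0$ blue, the other two edges black, and $E_i$ a directed black pair.
\end{itemize}
Together the two copies cover all four orbits (pink, blue, both black directions) of $i$, of $6k+4-i$, and of the difference of $E_i$. So the black-orientation bookkeeping you identified as the main obstacle takes care of itself.

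The only leftovers are difference $3k+2$ and the diameter, and they go into $F_s$. There the $4$-cycle $x_0\,x_{3k+2}\,x_{6k+4}\,x_{-(3k+2)}\,x_0$, all of whose edges have difference $3k+2$, is colored one pink, one blue, two black. It is joined by the diameter $2$-cycle $x_1x_{-(6k+3)}$. This construction covers $k=0$ uniformly, so no separate case is needed.
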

\begin{proof}
Let $s=4k+3$. By Lemma \ref{lem:4Kn-d} with $m_1=2$ and $m_2=4$, it suffices to find $(C_2,C_4)$-subgraphs $F_1,F_2,\ldots, F_s$ in $4K_n^{\bullet}$ that satisfy Conditions (B1) to (B3).

As in Lemma \ref{lem:4Kn-d}, let $V(4K_n^{\bullet})=\{x_i: i\in \ZZ_{n-1} \}\cup \{x_{\infty}\}$.  Let $\rho_{\bullet}$ be the permutation on $E(4K_n^{\bullet})$ that preserves the color (and orientation) of the edges, and is induced by  the permutation  $\rho=(x_{\infty})(x_0 \ x_1\  x_2 \ \ldots \ x_{n-2})$. Observe that the group $\langle \rho_{\bullet} \rangle$ has the following orbits on the edge set of $4K_n^{\bullet}$:
\begin{itemize}
\item for each $d\in \{1,2,\ldots, 6k+3\}$, we have a pink and a blue orbit $\{x_ix_{i+d}: i\in \ZZ_{n-1} \}$;
\item for each $d\in \{1,2,\ldots, 12k+7\}$, we have a black orbit $\{(x_i, x_{i+d}): i\in \ZZ_{n-1} \}$;
\item a pink and a blue orbit $\{ x_ix_{i+(6k+4)}: i=0,1,\ldots,\frac{n-3}{2}\}$;
\item a pink and a blue orbit  $\{x_ix_{\infty}: i\in \ZZ_{n-1} \}$; and
\item black orbits $\{(x_{\infty}, x_{i}): i\in \ZZ_{n-1} \}$ and $\{(x_i, x_{\infty}): i\in \ZZ_{n-1} \}$.
\end{itemize}
For convenience, let \( S = \left\{1, 2, \ldots, 6k+4, \infty \right\} \) be the set of all differences.

We start by constructing subgraphs \( F_1, F_2, \ldots, F_{s-1} \), where each \( F_i \) is a disjoint union of a 4-cycle and a 2-cycle, and the subgraphs \( F_1, F_2, \ldots, F_{s-1} \) jointly cover exactly four edges corresponding to the following differences from \( S \):
$$  1, 2, \ldots, 3k+1, 3k+3, \ldots,  6k+3,  \infty. $$ 

First, we construct the 4-cycles. For \( i \in \{1, 2, \ldots, 2k + 1\} \), let \( C_i \) be the following closed walk:
\[
C_i = x_{0} \ x_{i} \ x_{6k+4} \ x_{-(6k+4 - i)} \ x_{0}.
\]
It can be seen that \( C_i \) is a cycle of length four and traverses edges of differences \( i \) and \( 6k + 4 - i \) twice. This means that for \( i \in \{1, 2, \ldots, 2k + 1\} \), the cycles \( C_i \)  
jointly cover each of the following differences exactly twice:
\[
1, 2, \ldots, 2k + 1, \; 4k + 3, 4k + 4, \ldots, 6k + 2, 6k + 3.
\]
%
%
%
%
%
%
Next, construct cycles of length two, denoted by \( E_i \), as follows:
\begin{itemize}
\item for $i\in\{1,2,\ldots, 2k+1\}$ and $i\neq k+1$, let $E_i=x_{3k+2}\ x_{5k+3+i}\ x_{3k+2}$; and
\item for $i=k+1$, let $E_i=x_{3k+2}\ x_{\infty}\ x_{3k+2}$.
\end{itemize}
Observe that the 2-cycles \( E_i \), for \( i \in \{1, 2, \ldots, 2k + 1\} \), jointly cover each of the following differences exactly twice:
\[
2k + 2, 2k + 3, \ldots, 3k + 1, \; \infty, \; 3k + 3, \ldots, 4k + 1, 4k + 2.
\]
%
It can be easily checked that, for $i\in\{1,2,\ldots, 2k+1\}$, each $C_i$ is disjoint from each $E_i$. 
Now, for $i\in\{1,2,\ldots,2k+1\}$, let $G_i=C_i\mathbin{\dot{\cup}} E_i$. We see that $G_1,G_2, \ldots, G_{2k+1}$ jointly contain exactly two edges of each of the differences in $S$, except for differences $3k+2$ and $6k+4$. 

Consider two copies of each $G_i$, for all $i\in\{1,2,\ldots,2k+1\}$. We want to color each copy so that it satisfies Condition {\bf (C1)} of Definition {\rm{\ref{def}}}. In the first copy of $G_i$, color the edges of $E_i$ pink and blue; and in $C_i$, color the edge $x_{0}\ x_{i}$ pink, $x_{6k+4}\ x_{-(6k+4-i)}$ blue, and color the other two edges black and orient them away from the pink edge and towards the blue edge. In the second copy of $G_i$,  color the edges of $E_i$ black and orient them in opposite directions; and in $C_i$, color $x_{i}\ x_{6k+4}$ pink, $x_{-(6k+4-i)}\ x_{0}$ blue, and color the other two edges black and orient them away from the pink edge and towards the blue edge.  Considering two copies of each $G_i$, for all $i\in\{1,2,\ldots,2k+1\}$, we have $4k+2$ $(C_2,C_4)$-subgraphs, which we relabel as  $F_1, F_2, \ldots, F_{s-1}$. It is easy to verify that except for  differences $3k+2$ and $6k+4$, all other differences occur exactly four times (exactly one pink, one blue, and two opposite black with opposite directions) in $F_1, F_2, \ldots, F_{s-1}$. 

Next, we construct a $(C_2,C_4)$-subgraph $F_s$ that covers differences $3k+2$ and $6k+4$. Let $F_s=C_s\mathbin{\dot{\cup}} E_s$, where:
\begin{itemize}
\item $C_s= x_{0}\ x_{3k+2}\ x_{6k+4}\ x_{-(3k+2)}\ x_{0}$; and
\item $E_s= x_{1}\ x_{-(6k+3)}\ x_{1}$.
\end{itemize}
Color the edges of $C_s$ as follows: $x_{3k+2}\ x_{6k+4}$ blue, $x_{-(3k+2)}\ x_{0}$ pink, and the other two edges black and orient them away from the pink edge and towards the blue edge.
We see that $C_s$ covers the difference $3k+2$ exactly four times (one pink, one blue, and two black arcs), and $E_s$ covers $6k+4$ twice. 
Observe that the subgraphs $F_1,F_2,\ldots, F_{s}$ satisfy Conditions (B1) to (B3) of Lemma \ref{lem:4Kn-d}. Hence, $4K_n^{\bullet}$ admits an HOP $(C_2,C_4)$-decomposition.
\end{proof}


\begin{lem}{\label{lem:m=6-Samll2}}
There exists an HOP $(C_3,C_3)$-decomposition of $4K_9^{\bullet}$. 
\end{lem}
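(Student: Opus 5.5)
We need an HOP $(C_3,C_3)$-decomposition of $4K_9^{\bullet}$. The graph $4K_9^{\bullet}$ has $144$ edges and each $(C_3,C_3)$-subgraph has $6$ edges, so we need $24$ subgraphs. $K_9$ itself has no $(C_3,C_3)$-decomposition (the exception in Theorem~\ref{theo:(Cm1,...,Cmt)-dec}), so Lemma~\ref{lem:Gtool4} cannot be applied directly. Instead we work in $2K_9^{\circ}$ and use Lemma~\ref{lem:Gtool2}. It suffices to find a $(C_3,C_3)$-decomposition of $2K_9^{\circ}$ in which every triangle contains an even number of pink edges, i.e.\ zero or two pink edges.

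For this we use the cyclic structure of Lemma~\ref{lem:2Kn-d}, which is stated with $m_1=2$. Its counting argument, however, does not use $m_1=2$, so the same argument works verbatim for $(C_3,C_3)$-subgraphs. Set $V(2K_9^{\circ})=\{x_i:i\in\ZZ_8\}\cup\{x_\infty\}$ and $\rho=(x_\infty)(x_0\,x_1\,\ldots\,x_7)$. There are two orbits of length $4$, namely the pink and black edges of difference $4$. The remaining orbits have length $8$: pink and black for each of the differences $1,2,3,\infty$, giving $8$ orbits. We need starters $F_1,\dots,F_s$ containing one edge from each short orbit and an antipodal pair $(e,\rho^4(e))$ from each long orbit. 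That is $2+16=18$ edges, so $s=3$ starter $(C_3,C_3)$-subgraphs, developed under $\rho^0,\dots,\rho^3$. This gives $12$ subgraphs of $2K_9^{\circ}$, which double to $24$ in $4K_9^{\bullet}$, as required.

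The concrete step is to choose six triangles, two per starter, with vertex-disjoint pairs. The natural antipodally symmetric choice is to take each starter $F_j$ as a triangle $T$ together with $\rho^4(T)$, or a near-symmetric variant. This automatically produces antipodal edge pairs and keeps the two triangles disjoint whenever $T$ avoids $x_\infty$ and contains no two antipodal vertices. Triangles through $x_\infty$ need separate handling, because $\rho^4$ fixes $x_\infty$. Tentatively:
\begin{itemize}
\item $F_1=\{x_0x_1x_3,\ x_4x_5x_7\}$, with differences $1,2,3$ in each triangle;
\item $F_2=\{x_0x_2x_\infty,\ x_4x_5x_1\}$, so the second triangle contains a difference-$4$ edge, or a similar choice;
\item $F_3$, chosen to cover the rest.
\end{itemize}
Parity matters here. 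The triangles must carry the differences $1,2,3,\infty$ (each with multiplicity $4$, split $2$ pink and $2$ black) plus the two difference-$4$ edges. Each triangle on $\ZZ_8$ has differences $a,b,a\pm b$ or with sum $8$, and each $\infty$-triangle uses two $\infty$ edges and one finite edge.

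The main obstacle is this combined difference-and-parity bookkeeping: finding three starters whose edges meet conditions (A2) and (A3) exactly while each triangle gets $0$ or $2$ pink edges. I expect to settle it by a small hand search. The plan is to first fix the $\infty$-triangles $x_\infty x_ax_b$ and their antipodes, then fill in differences $1,2,3,4$. Pink edges are assigned in pairs inside triangles: for each difference, the edges of an antipodal pair are either both pink or both black, and a triangle with two pink-pair edges is paired with the complementary black copies. Should the cyclic approach fail for parity reasons, the fallback is to use $(C_3,C_3)$-subgraphs of $4K_9^{\bullet}$ directly via Lemma~\ref{lem:4Kn-e}, coloring each triangle to satisfy (C1) by hand. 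One option is blue--pink--black with the black edge oriented toward blue and away from pink; another is an all-black, consistently oriented triangle.
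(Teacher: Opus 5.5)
Your primary route is impossible, not just incomplete. In the setup you borrow from Lemma~\ref{lem:2Kn-d}, conditions (A2) and (A3) force the three starters to contain exactly two edges from each of the four long pink orbits (differences $1,2,3,\infty$) and exactly one edge from the short pink orbit (difference $4$). That is exactly $2\cdot 4+1=9$ pink edges. But (A1), which is what Lemma~\ref{lem:Gtool2} needs, requires each of the six starter triangles to contain $0$ or $2$ pink edges, so together they contain an even number. Hence no choice of $F_1,F_2,F_3$ satisfies (A1)--(A3), and the ``small hand search'' is guaranteed to fail.

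Your remark that the counting argument of Lemma~\ref{lem:2Kn-d} does not use $m_1=2$ is true of the edge count. However, the hypothesis $m_1=2$ is what makes (A1) satisfiable. Condition (A1) exempts $2$-cycles, and a $2$-cycle in $2K_n^{\circ}$ has exactly one pink edge, so it can absorb the lone pink diameter edge. This is precisely the role of $E_{4k+1}$ in Lemmas~\ref{lem:m=10-Samll3} and~\ref{lem:m=10-Samll4}. With two triangles and no $2$-cycle there is nowhere to put that edge. If you want to stay in $2K_9^{\circ}$, you must at least abandon this $\rho$-symmetry.

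Your fallback is the paper's actual proof: apply Lemma~\ref{lem:4Kn-e} with $m_1=m_2=3$ directly in $4K_9^{\bullet}$. This needs six starters, not three. They must contain one pink and one blue edge of difference $4$, plus an antipodal pair from each of the $17$ long orbits: pink and blue for $1,2,3,\infty$; black arcs of differences $1,\dots,7$; and the two black $\infty$-orbits. That is $36$ edges in all.

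Here the parity obstruction disappears. Condition (C1) forces each triangle to be either pink--blue--black, with the black arc leaving the pink edge and entering the blue one, or a consistently oriented all-black triangle. So the $9$ pink, $9$ blue and $18$ black starter edges can be realised by nine mixed and three all-black triangles.

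Still, the entire content of this lemma is an explicit list of six such $(C_3,C_3)$-subgraphs with the right differences, colours and orientations; the paper gives them in a figure. Your proposal supplies none, so as it stands it is a plan rather than a proof.
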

\begin{proof}  
Let $s=6$. By Lemma \ref{lem:4Kn-e} with $m_1=3$ and $m_2=3$, it suffices to find $(C_3,C_3)$-subgraphs $F_1,F_2,\ldots, F_6$ in $4K_n^{\bullet}$ that satisfy Conditions (D1) to (D3). Observe that such  $F_1,F_2,\ldots, F_6$ subgraphs are shown in Figure \ref{fig:C3D}.  Therefore, $4K_9^{\bullet}$ admits an HOP $(C_3,C_3)$-decomposition.
\end{proof}

\begin{figure}[p]
 \centering
   \includegraphics[width=\linewidth,height=\textheight,keepaspectratio]{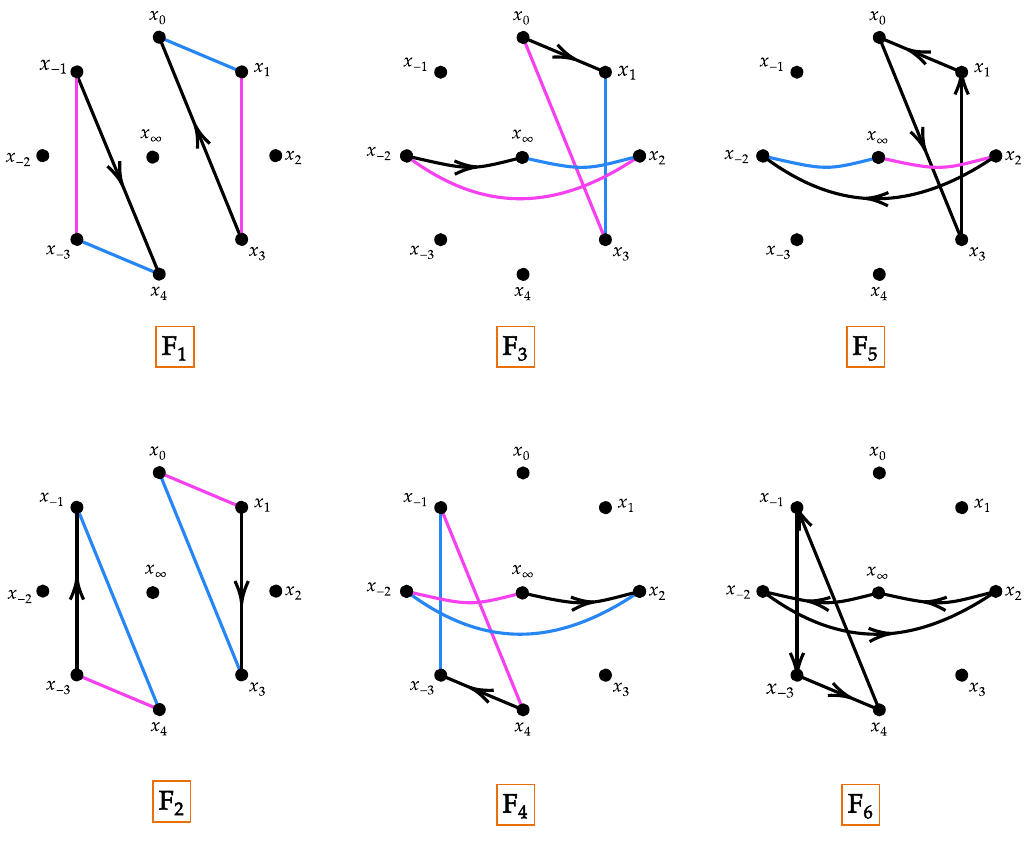}
    \caption{ $(C_3, C_3)$-subgraphs  of $4K_{9}^{\bullet}$ that generate  an HOP $(C_3,C_3)$-decomposition for $4K_{9}^{\bullet}$.}
    \label{fig:C3D}
  \end{figure}
\subsection{HOP Decompositions for $n\equiv 1\ ({\rm mod}\ 16)$ }
\begin{lem}{\label{lem:m=8-Samll0}}
Let $n\equiv 1\ ({\rm mod}\ 16)$. Then $4K_n^{\bullet}$ admits an HOP $(C_2, C_2, C_2, C_2)$-decomposition.  
\end{lem}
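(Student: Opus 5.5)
The plan is to follow the proof of Lemma~\ref{lem:m=46-Samll0} and reduce to a cycle decomposition of $K_n$. By Theorem~\ref{theo:Cm-dec} with $m=8$, the complete graph $K_n$ has a $(C_8)$-decomposition whenever $n$ is odd, $8\le n$, and $8 \mid \frac{n(n-1)}{2}$. Write $n=16k+1$ with $k\geq 1$. Then $n$ is odd and $n\geq 17\geq 8$. Also $\frac{n(n-1)}{2}=(16k+1)\cdot 8k$, which is divisible by $8$. So the hypotheses hold and a $(C_8)$-decomposition of $K_n$ exists.

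Next I will apply Corollary~\ref{lem:Newtool-01} with $t=s=1$ and $m_1=8$. This needs $m_1\geq 3$ and $m_1$ even, both of which hold. The corollary then gives an HOP $(C_2^{\langle 4\rangle})$-decomposition of $4K_n^{\bullet}$, that is, an HOP $(C_2,C_2,C_2,C_2)$-decomposition.

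A second route is also available. Each $8$-cycle splits into two perfect matchings of order $8$, so $K_n$ decomposes into $1$-regular subgraphs of order $8$. Lemma~\ref{G4C} with $t=4$ then gives the same conclusion directly.

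I expect no real obstacle here. The only points to check are that the degenerate case $n=1$ is excluded by $k\geq1$, and that the divisibility condition in Theorem~\ref{theo:Cm-dec} is met, which the computation above confirms.
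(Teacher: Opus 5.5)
Your proposal is correct and matches the paper's proof: Theorem~\ref{theo:Cm-dec} gives a $(C_8)$-decomposition of $K_n$ for $n\equiv 1 \pmod{16}$, and Corollary~\ref{lem:Newtool-01}, applied with the single even cycle length $8$, turns it into an HOP $(C_2,C_2,C_2,C_2)$-decomposition of $4K_n^{\bullet}$. Your alternative via Lemma~\ref{G4C} with $t=4$ is also valid and rests on the same idea as the corollary's proof, namely splitting each $8$-cycle into two perfect matchings of order $8$.
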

\begin{proof}
By Theorem \ref{theo:Cm-dec}, there exists a $(C_8)$-decomposition of $K_n$. Applying Corollary~\ref{lem:Newtool-01},  we see that  $4K_n^{\bullet}$ admits an HOP $(C_2, C_2, C_2, C_2)$-decomposition.
\end{proof}

\begin{lem}{\label{lem:m=8-Samll1}}
Let $n=16k+1$ for any integer $k\geq 1$. There exists an HOP $(C_2, C_3, C_3)$-decomposition of $4K_n^{\bullet}$. 
\end{lem}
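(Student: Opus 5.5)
Following the pattern of Lemma~\ref{lem:Newtool-1b}, we plan to work in $K_n$ with $n=16k+1$, vertex set $\{x_i: i\in\ZZ_n\}$, and the cyclic permutation $\rho=(x_0\ x_1\ \ldots\ x_{n-1})$. The edge orbits of $\langle\rho\rangle$ are then indexed by the $8k$ differences $\{1,\ldots,8k\}$, each orbit of size $n$. We will build $k$ base subgraphs $F_1,\ldots,F_k$ of $K_n$. Each $F_i$ is to be a vertex-disjoint union of two $3$-cycles and two edge-disjoint $1$-regular subgraphs of order $2$, i.e.\ two extra independent edges, disjoint from the triangles. Together, $F_1,\ldots,F_k$ should contain exactly one edge of each difference in $\{1,\ldots,8k\}$; the count checks out, since each $F_i$ has $6+2=8$ edges. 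Then $\{\rho^j(F_i)\}$ is a decomposition of $K_n$ into copies of $H\mathbin{\dot{\cup}} H'$, with $H$ a $(C_3,C_3)$-subgraph and $H'$ two disjoint $K_2$'s. Lemma~\ref{lem:Newtool-00} with $t=2$ and $\alpha=1$ then gives an HOP $(C_2,C_3,C_3)$-decomposition of $4K_n^{\bullet}$.

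The core is a difference-triple construction. A $3$-cycle $x_0\,x_a\,x_{a+b}$ uses differences $a$, $b$ and $a+b$ (reduced to at most $8k$), so we need $2k$ triples $\{a,b,c\}$ with $a+b=c$ or $a+b+c=n$. Together these triples should use $6k$ distinct differences. The remaining $2k$ differences are then split into pairs $\{d_i,d_i'\}$, one pair per $F_i$, realised as single edges.

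The natural tool is a Skolem/Langford-type sequence. A Skolem sequence of order $2k$ gives $2k$ triples $\{j, b_j+2k, a_j+2k\}$ with $a_j+b_j$ suitably related, so the triples use differences $1,\ldots,6k$ (Heffter/Skolem construction). A Skolem sequence exists for order $2k\equiv 0,1 \pmod 4$, and a hooked Skolem sequence covers the other residues. This leaves differences $6k+1,\ldots,8k$ for the $2k$ free edges, and we pair them up as $\{6k+i,\,8k+1-i\}$ or a similar explicit pairing.

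Next we pair the triples two at a time into the $F_i$. Each triple is translated to a base block $\{x_{t},x_{t+a},x_{t+a+b}\}$. The translates $t$ are chosen so that the two triangles of $F_i$ and its two free edges $x_u x_{u+d}$, $x_{u'}x_{u'+d'}$ occupy eight distinct vertices. Since $n=16k+1$ is large compared with the $8$ vertices needed in each $F_i$, we have a great deal of room. We translate the triangles to sit in $[x_0,x_{6k}]$-style regions and place the long-difference edges so their endpoints straddle the far side of the cycle, as in the interval arguments of Lemma~\ref{lem:Newtool-1b}.

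The main obstacle is making the vertex-disjointness within each $F_i$ explicit for all $k$. A second difficulty is that the Skolem-based triples are not given in closed form. To avoid relying on existence alone, we would first try explicit triples in the style of the cycles of Lemma~\ref{lem:Newtool-1b}: for $i=1,\ldots,k$, triangles with differences $(2i-1,\ 4k-i+1,\ 4k+i)$ and $(2i,\ 3k-i+1,\ 3k+i+1)$ in the additive form $a+b=c$, adjusting by hand until each difference in $\{1,\ldots,6k\}$ appears once. We would then fall back on the standard cyclic Steiner-triple (Peltesohn) difference families if this adjustment fails. Small $k$, such as $k=1$ with $n=17$, would be checked directly.
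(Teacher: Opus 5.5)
Your reduction is the same as the paper's. You take $k$ base subgraphs of $K_{16k+1}$, each consisting of two disjoint triangles plus two edges avoiding them, using each difference in $\{1,\dots,8k\}$ exactly once. These are developed under $\rho$ and fed into Lemma~\ref{lem:Newtool-00}.

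The gap is in the one step with real content: choosing the difference triples. As written, this step is not delivered.

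\begin{itemize}
\item \emph{Your explicit families fail.} The second family's values $3k+i+1$ coincide with the first family's values $4k-i+1$ on $[3k+2,4k]$, and with $4k+i$ at $4k+1$. So only $5k$ distinct differences occur, and $[5k+1,6k]$ is never used. Already for $k=1$ you get $\{1,4,5\}$ and $\{2,3,5\}$.
\item \emph{The Peltesohn fallback does not transfer.} The non-additive blocks of a cyclic STS$(12k+1)$ satisfy $a+b+c=12k+1\not\equiv 0 \pmod{16k+1}$. So they are not triangles in $\mathbb{Z}_{16k+1}$.
\item \emph{The Skolem branch does work if you commit to it.} With $b_j-a_j=j$ (not a relation on $a_j+b_j$), the triples $\{j,a_j+2k,b_j+2k\}$ are additive. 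But for $k$ odd a hooked sequence covers $\{1,\dots,6k-1,6k+1\}$, so the free edges must take $\{6k,6k+2,\dots,8k\}$, not $\{6k+1,\dots,8k\}$. This route also imports the Skolem/O'Keefe existence theorems unnecessarily.
\end{itemize}

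The missing observation is that the triangles need not cover an initial segment of differences. Any $6k$ differences will do, since the remaining $2k$ go to single edges. The paper takes $C_i=x_{1-i}\,x_{-(8k-3i+1)}\,x_i$ for $1\le i\le 2k$, with differences $2i-1$, $8k-4i+2$ and $8k-2i+1$. These exhaust the odd differences and those $\equiv 2 \pmod 4$, leaving exactly the multiples of $4$. The multiples of $4$ are realised as the edges $x_{2k+1}x_{2k+1+4j}$ for $1\le j\le 2k$, and $F_i$ combines $C_{2i-1},C_{2i}$ with $E_{2i-1},E_{2i}$.

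Vertex-disjointness is not the obstacle you expect. Translates can be chosen block by block, and each new triangle or edge has at most $16$ forbidden translates out of $n\ge 17$, so a greedy choice always succeeds. That argument is worth keeping, because the paper's fixed placement actually collides at $k=1$: there $E_2=x_3x_{11}$ meets $C_1=x_0x_{11}x_1$ inside $F_1$.
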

\begin{proof}  
Let $V(K_n)=\{x_i: i\in \ZZ_n\}$, and let  $\rho=(x_0\ x_1\ \ldots \ x_{n-1})$ be a permutation on $V(K_n)$.
By the given labeling, each edge in $E(K_n)$ is of the form $x_ix_{i+d}$, where $d\in \{1,2,\ldots, 8k\}$. Therefore, the group $\langle \rho \rangle$ acting on $E(K_n)$ has $8k$ orbits, each of size $n$.

Our goal is to construct $k$ subgraphs of $K_n$, namely $F_1,F_2, \ldots, F_k$, where each $F_i$ is a disjoint union of a $(C_3, C_3)$-subgraph and two edge-disjoint $1$-regular subgraphs of order $2$ such that $F_1,F_2, \ldots, F_k$ jointly contain exactly one edge of each difference in $\{1,2,\ldots, 8k\}$.  Therefore, $\D=\{\rho^i(F_1), \rho^i(F_2), \ldots, \rho^i(F_k): i\in \ZZ_n\}$ is a decomposition for $K_n$ that satisfies the condition of  Lemma \ref{lem:Newtool-00} (for $\alpha=1$ and $m_1=m_2=3$); hence, by Lemma \ref{lem:Newtool-00}, the multigraph $4K_n^{\bullet}$ admits an HOP $(C_2, C_3, C_3)$-decomposition.

Now, for $i\in\{1,2,\ldots,2k\}$, construct the following closed walk:
$$ C_i= x_{1-i}\ x_{-(8k-3i+1)}\ x_{i}\ x_{1-i}.$$
It can be seen that $C_i$ is a cycle of length three, and traverses edges of differences 
$$8k-4i+2, 8k-2i+1, 2i-1.$$ 
Observe that $C_1, C_2, \ldots, C_{2k}$ jointly contain exactly one edge of each difference in $\{1,2,\ldots, 8k\}$  except for differences $4i$, for $i\in\{1,2,\ldots,2k\}$. 
The following subgraphs of $K_n$ (isomorphic to $K_2$) cover the leftover differences:
\begin{itemize}
\item $E_i=x_{2k+1}\ \ x_{2k+4i+1}$, for $i\in\{1,2,\ldots,k\}$;
\item $E_i=x_{2k+1}\ \ x_{-(14k-4i)}$, for $i\in\{k+1,\ldots,2k\}$.
\end{itemize}
Observe that none of the vertices in the interval \([x_{2k+1}, x_{8k+2}]\) are used in any \( C_i \) for \( i \in \{1, 2, \ldots, k\} \), and only vertices from this interval are used to construct \( E_i \) for \( i \in \{1, 2, \ldots, k\} \).
 Furthermore, none of the vertices in the interval \([x_{2k+1}, x_{11k+2}]\) are used in any $C_{i}$ for $i\in\{k+1,\ldots,2k\}$, and only vertices from this interval are used to construct $E_i$ for $i\in\{k+1,\ldots,2k\}$. Hence, for $i\in\{1,2,\ldots,2k\}$, the subgraph $C_{i}$ is disjoint from the subgraph $E_{i}$. Now, for $i\in\{1,2,\ldots,k\}$, let $F_i= C_{2i-1}\mathbin{\dot{\cup}} C_{2i} \mathbin{\dot{\cup}} (E_{2i-1}\cup E_{2i})$. It is easy to verify that $F_1, F_2, \dots, F_k$ satisfy the required conditions.
 \end{proof} 
\subsection{HOP Decompositions for $n\equiv 1 \text{ \normalfont{or }} 9\ ({\rm mod}\ 18)$ }
\begin{lem}{\label{lem:m=9-Samll0}}
Let $n=18k+1$ for any integer $k\geq 1$. There exists an HOP $(C_2, C_3, C_4)$-decomposition of $4K_n^{\bullet}$. 
\end{lem}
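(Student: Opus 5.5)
We follow the template of Lemma~\ref{lem:m=8-Samll1}. Label $V(K_n)=\{x_i: i\in\ZZ_n\}$ with $n=18k+1$, and let $\rho=(x_0\ x_1\ \ldots\ x_{n-1})$. Then $\langle\rho\rangle$ has $9k$ edge orbits on $E(K_n)$, one for each difference in $\{1,2,\ldots,9k\}$. The goal is to build $k$ base subgraphs $F_1,\ldots,F_k$ of $K_n$. Each $F_i$ will be the vertex-disjoint union of a $3$-cycle, a $4$-cycle, and two edges, with the two edges coming from two edge-disjoint $1$-regular subgraphs of order $2$. Together, $F_1,\ldots,F_k$ must use exactly one edge of each difference in $\{1,\ldots,9k\}$. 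Then $\{\rho^j(F_i)\}$ is a decomposition of $K_n$ into copies of $H\mathbin{\dot{\cup}} H'$, with $H$ a $(C_3,C_4)$-subgraph and $H'$ a union of two $K_2$'s. Lemma~\ref{lem:Newtool-00} with $\alpha=1$ then gives an HOP $(C_2,C_3,C_4)$-decomposition of $4K_n^{\bullet}$.

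For the $3$-cycles we would use the triangles already used in Lemma~\ref{lem:m=8-Samll1}, adapted to the larger range of differences. Take $T_i=x_{1-i}\,x_{-(a-3i+1)}\,x_i\,x_{1-i}$ with $a$ chosen so that, for $i=1,\ldots,k$, the triangles use the differences $2i-1$, $a-2i+1$ and $a-4i+2$. A difference triple $\{p,q,p+q\}$ closes into a triangle, so any choice of this shape works. With $a=9k$, the $T_i$ use the odd differences $1,3,\ldots,2k-1$ together with a block of differences near $9k$.

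For the $4$-cycles, each needs four distinct differences $d_1,d_2,d_3,d_4$ with a signed sum $\pm d_1\pm d_2\pm d_3\pm d_4=0$. The standard choice is $x_0\,x_{a}\,x_{a-b}\,x_{c}\,x_0$, which realises differences $a,b,a-b-c,c$ up to sign. We would take cycles $Q_i$ on a vertex interval disjoint from $T_i$ that use the remaining small even differences and a middle block. A natural pattern is $\{2i,\;3k+i,\;\ldots\}$ with signed sum $0$, producing the differences $2i$, $4k+2i$, $6k-i$, and so on.

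The differences still unused after the $T_i$ and $Q_i$ are then $2k$ values. We would cover them with two ``chord'' families $E_i^{(1)}$, $E_i^{(2)}$, each of the form $x_{-u}x_{v}$ with $u,v$ in reserved intervals, exactly as in the other lemmas. We then set $F_i=T_i\mathbin{\dot{\cup}} Q_i\mathbin{\dot{\cup}}(E_i^{(1)}\cup E_i^{(2)})$.

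The main obstacle is the bookkeeping. The $7k$ differences used by the cycles must be chosen so that the leftover $2k$ differences can be realised by edges whose endpoints avoid all vertices of $T_i$ and $Q_i$; vertex-disjointness is required only within each $F_i$, not across different $i$. We would first try to make the leftovers two arithmetic progressions of length $k$, for instance $\{6k+2j\}$ and $\{6k+2j-1\}$, so that the chords can be placed in parallel as in Case~2 of Lemma~\ref{lem:Newtool-1b}. We would verify the construction by hand for $k=1,2$ before confirming the general interval-disjointness argument.
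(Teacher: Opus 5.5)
Your framework is the paper's. You use the cyclic group on $\ZZ_{18k+1}$ and $k$ base blocks, each a $C_3$, a $C_4$ and two edges, with exactly one edge of each difference $1,\dots,9k$, and then Lemma~\ref{lem:Newtool-00} with $\alpha=1$. \textbf{The gap is that the proof stops exactly where the lemma's content begins.} You never specify the $4$-cycles, the leftover set, or the chords, so nothing about coverage or disjointness is verified. Checking $k=1,2$ by hand would not establish the general case either.

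Moreover, the pieces you do commit to do not fit together. With $a=9k$, your triangles use the differences $2i-1$, $9k-2i+1$ and $9k-4i+2$, that is,
$\{1,3,\dots,2k-1\}\cup\{7k+1,7k+3,\dots,9k-1\}\cup\{5k+2,5k+6,\dots,9k-2\}$.
\begin{itemize}
\item Your proposed leftover progressions $\{6k+2j-1\}$ and $\{6k+2j\}$ ($1\le j\le k$) fill $[6k+1,8k]$, so they collide with the triangles. Already for $k=1$ the triangle has differences $1,8,7$ and your leftovers are $7,8$.
\item The ``natural pattern'' $2i,\,4k+2i,\,6k-i$ for the $4$-cycles also hits triangle differences: for $k=2$, $4k+2\cdot 2=12=5k+2$.
\item That pattern can even repeat a difference within a single cycle: for $k=3$, $4k+2\cdot 2=16=6k-2$.
\item The fourth difference of each $4$-cycle is never specified.
\end{itemize}

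For reference, here is how the paper makes the choice work.
\begin{itemize}
\item The $4$-cycles take the smallest differences: $C_i=x_0\,x_{4i-3}\,x_{-1}\,x_{4i-1}\,x_0$ has differences $4i-3,4i-2,4i,4i-1$. It closes because $(4i-3)+4i=(4i-2)+(4i-1)$, and $C_1,\dots,C_k$ use exactly $1,\dots,4k$.
\item The triangles are $x_{-2}\,x_{4k+2i-2}\,x_{-(4k+2i+1)}\,x_{-2}$. They use $4k+2i-1$, $4k+2i$, and $8k+4i-1$ reduced mod $n$ (so $8k+4i-1$ or $10k-4i+2$).
\item The $2k$ leftovers are $6k+2i-1$, together with $6k+4i$ or $12k-4i+1$. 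They are realized by the edges $x_{-3}x_{-(6k+2i+2)}$ and $x_{-3}x_{12k-4i-2}$.
\item These two edges share the vertex $x_{-3}$. That is allowed, since Lemma~\ref{lem:Newtool-00} only needs the two order-$2$ matchings to be edge-disjoint, not vertex-disjoint.
\item Every chord endpoint is $x_{-3}$ or lies in $[x_{6k-1},x_{12k-1}]$, and no cycle uses these vertices, so each block is a genuine disjoint union.
\end{itemize}
If you keep your own triangles, you must still give an analogous explicit family and prove coverage and disjointness for all $k$.
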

\begin{proof}  
Let $V(K_n)=\{x_i: i\in \ZZ_n\}$, and let  $\rho=(x_0\ x_1\ \ldots \ x_{n-1})$.
Each edge in $E(K_n)$ is of the form $x_ix_{i+d}$, where $d\in \{1,2,\ldots, 9k\}$. Therefore, the group $\langle \rho \rangle$ has $9k$ orbits on $E(K_n)$, each of size $n$.

We construct subgraphs $F_1, \ldots, F_k$, each a disjoint union of a $(C_3, C_4)$-subgraph and two edge-disjoint 1-regular subgraphs of order 2. If these subgraphs jointly contain exactly one edge of each difference in $\{1, \ldots, 9k\}$, then $\D = \{\rho^i(F_j) : i \in \ZZ_n, 1 \le j \le k\}$ is a decomposition for $K_n$ that satisfies the condition of  Lemma \ref{lem:Newtool-00} (for $\alpha=1$,  $m_1=3$, and $m_2=4$); hence, the multigraph $4K_n^{\bullet}$ admits an HOP $(C_2, C_3, C_4)$-decomposition.

Now, for $i\in\{1,2,\ldots,k\}$, construct the following closed walks:
$$ C_i= x_{0}\ x_{4i-3}\ x_{-1}\ x_{4i-1}\ x_{0};$$
$$ C_i'= x_{-2}\ x_{4k+2i-2}\ x_{-(4k+2i+1)}\ x_{-2}.$$
It can be seen that $C_i$ and $C_i'$ are two disjoint cycles of lengths four and three, respectively, and  traverse edges of differences 
$$C_i: 4i-3, 4i-2, 4i, 4i-1;$$ 
$$C_i': 4k+2i, d, 4k+2i-1.$$ 
In \( C_i' \), we have \( d = 8k + 4i - 1 \) for \( i \in \{1, 2, \ldots, \lfloor \tfrac{k+1}{4} \rfloor \} \), and \( d = 10k - 4i + 2 \) for \( i \in \{\lceil \tfrac{k+1}{4} \rceil, \ldots, k\} \).

Observe that, for $i\in\{1,2,\ldots,k\}$, $C_i$ and $C_i'$ jointly contain exactly one edge of each difference in $\{1,2,\ldots, 9k\}$  except  for the following differences:
\begin{enumerate}[\bf(i)]
\item $6k+2i-1$, for $i\in\{1,2,\ldots,k\}$; 
\item $6k+4i$, for $i\in\{1,2,\ldots,\floor{\frac{3k}{4}}\}$; 
\item $12k-4i+1$, for $i\in\{\floor{\frac{3k}{4}}+1,\ldots,k\}$. 
\end{enumerate}
The following subgraphs of $K_n$ cover the differences listed above exactly once:
\begin{enumerate}[\bf(i)]
\item $E_i=x_{-3}\ \ x_{-(6k+2i+2)}$, for $i\in\{1,2,\ldots,k\}$;
\item $E_i'=x_{-3}\ \ x_{-(6k+4i+3)}$, for $i\in\{1,2,\ldots,\floor{\frac{3k}{4}}\}$; 
\item $E_i'=x_{-3}\ \ x_{12k-4i-2}$, for $i\in\{\floor{\frac{3k}{4}}+1,\ldots,k\}$. 
\end{enumerate}
Notice that the vertex \( x_{-3} \), as well as all vertices in the interval \([x_{6k-1}, x_{12k-1}]\), are not used in any of the cycles \( C_i \) or \( C_i' \), and that only vertices from this interval are used to construct the graphs \( E_i \) and \( E_i' \). Hence, for \( i \in \{1, 2, \ldots, k\} \), the cycles \( C_i \) and \( C_i' \) are disjoint from the edge-disjoint subgraphs \( E_i \) and \( E_i' \).

Now, for $i\in\{1,2,\ldots,k\}$, let $F_i= C_{i}\mathbin{\dot{\cup}} C_{i}' \mathbin{\dot{\cup}} (E_{i}\cup E_{i}')$. It is easy to verify that $F_1, F_2, \dots, F_k$ satisfy the required conditions.
\end{proof} 
\begin{lem}{\label{lem:m=9-Samll1}}
Let $n=18k+9$ for any integer $k\geq 0$. There exists an HOP $(C_2, C_3,C_4)$-decomposition of $4K_n^{\bullet}$. 
\end{lem}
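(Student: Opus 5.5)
We plan to mirror the proof of Lemma~\ref{lem:m=6-Samll1}, which handles $n=12k+9$ through the $1$-rotational structure of Lemma~\ref{lem:4Kn-d}. Here $n-1=18k+8$, so with $V(4K_n^{\bullet})=\{x_i: i\in\ZZ_{n-1}\}\cup\{x_\infty\}$ the differences are $S=\{1,\ldots,9k+4,\infty\}$, and $9k+4=\frac{n-1}{2}$ is the diameter difference. The graph $4K_n^{\bullet}$ has $2n(n-1)$ edges and each $(C_2,C_3,C_4)$-subgraph uses $9$ of them, so we need $\frac{2n(n-1)}{9}=4(2k+1)(9k+4)$ subgraphs. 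Under Lemma~\ref{lem:4Kn-d}, this corresponds to $s=4k+2$ starter subgraphs $F_1,\ldots,F_s$ satisfying (B1)--(B3).

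Following the $(C_2,C_4)$ construction, the starters will come in pairs. We build $2k+1$ base graphs $G_i=C_i\mathbin{\dot{\cup}}C_i'\mathbin{\dot{\cup}}E_i$ in $K_n$, where $C_i$ is a $4$-cycle, $C_i'$ a triangle and $E_i$ a single edge (later doubled into a $2$-cycle). Each $G_i$ is taken in two coloured copies so that together they cover four copies of its edges, with one pink, one blue and two opposite black arcs each.

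For the $4$-cycles we will reuse the shape $x_0\,x_i\,x_{d}\,x_{-(d-i)}\,x_0$, which traverses two differences twice each. Its two copies can be coloured exactly as in Lemma~\ref{lem:m=6-Samll1}: one copy gets a pink edge, the opposite blue edge, and two black arcs oriented away from pink and toward blue; the other copy gets the complementary choice. The $2$-cycle $E_i$ is coloured pink/blue in one copy and as opposite black arcs in the other. Triangles are the problem, because a single triangle used twice cannot receive all four copies of each of its edges while satisfying (C1).

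So the key change is that triangles must not be duplicated. Instead, each coloured triangle will be a distinct triangle, with its edge colours governed by Lemma~\ref{lem:Gtool4}. Concretely, we plan to start from a $C_3$-decomposition of the circulant part. A $C_3$-decomposition of $K_{18k+9}$ exists by Theorem~\ref{theo:Cm-dec}, but we need it $1$-rotational. Our alternative is to choose triangle starters of the form $x_{a}\,x_{b}\,x_{c}$ whose differences $(d_1,d_2,d_1+d_2)$ form a Skolem/Langford-type partition of a subset $T\subseteq S$. Each triangle orbit then receives all four coloured copies by taking four distinct coloured starters, each covering one colour class per difference: a blue triangle, a pink triangle, a directed black triangle, and the reversed black triangle. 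This costs four starters for every $3$ differences. The remaining differences, including $\infty$ and enough others, are covered twice by the $4$-cycles and $2$-cycles, whose starters each appear in two coloured copies. We then tune the count so that the $4$-cycles cover $2(2k+1)\cdot 2$ difference-slots and the $2$-cycles cover what is left.

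Finally, the diameter difference $9k+4$ must appear exactly twice and inside a $2$-cycle, as (B2) requires; we will place it as $E_s=x_1\,x_{1+9k+4}\,x_1$ in one designated starter, as in Lemma~\ref{lem:m=6-Samll1}. For $k=0$ ($n=9$) we expect to give an explicit set of six starters, in the spirit of Figure~\ref{fig:C3D}. The main obstacle is the bookkeeping: triangles consume differences in triples and need four starters, while $4$-cycles and $2$-cycles consume differences in pairs with two copies each, so all of them must fit into exactly $4k+2$ starters, each containing exactly one $C_2$, one $C_3$ and one $C_4$, all vertex-disjoint. We will first fix the triangle differences as $\{2i-1,\,6k+3-i,\,\ldots\}$-style triples and then adjust the $4$-cycle parameters around them.
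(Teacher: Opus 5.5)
Your route differs from the paper's: you work $1$-rotationally on $\ZZ_{18k+8}\cup\{x_\infty\}$ via Lemma~\ref{lem:4Kn-d}. As described, though, it cannot be completed.

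\textbf{Triangle count.} You correctly find $s=4k+2$ starters, each containing exactly one triangle. Your scheme then requires every base triangle to appear in exactly four coloured starters, so the number of triangles would have to be divisible by $4$. But $4k+2$ never is. For $k=0$ there are just two starters, not six.

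\textbf{$2$-cycle count.} The same parity defect affects the $2$-cycles. One starter carries the diameter $2$-cycle, so the remaining $4k+1$ two-cycles are odd in number. They therefore cannot all be paired as a pink/blue copy plus a black copy of one edge.

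So the ``bookkeeping'' you postpone is not merely tedious; it is impossible within your scheme. You would need some starters whose cycles cover only part of the colour orbits of a difference, completed by cycles from other starters. Candidates include the special $4$-cycle $C_s$ of Lemma~\ref{lem:m=6-Samll1}, or triangles through $x_\infty$, whose two $\infty$-edges lie in one orbit. None of this is specified.

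\textbf{Colouring.} There is also an error in the colouring. A blue (or pink) triangle violates (C1), since two blue edges may never be adjacent. In fact every HOP cycle has as many blue edges as pink edges. A decomposition of $4C_3^{\bullet}$ into HOP triangles therefore necessarily consists of one directed black triangle plus three triangles, each with one pink edge, one blue edge, and one black edge oriented away from the pink edge toward the blue edge.

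\textbf{How the paper avoids this.} The paper avoids the fixed point altogether.
\begin{itemize}
\item It writes $K_{9(2k+1)}=(2k+1)K_9\oplus K_{(2k+1)[9]}$.
\item It handles the copies of $K_9$, and hence the case $k=0$, with the HOP $(C_2,C_3,C_4)$-factorization of $4K_9^{\bullet}$ from \cite{LDMSaj} and Lemma~\ref{Gtool3}.
\item On $K_{(2k+1)[9]}$ it uses the full cyclic group $\ZZ_n$. Its $9k$ edge orbits are covered by $k$ base blocks $C_i\mathbin{\dot{\cup}}C_i'\mathbin{\dot{\cup}}(E_i\cup E_i')$ in $K_n$: a $4$-cycle, a triangle and two single edges, together using one edge of each difference in $S$.
\item Lemma~\ref{lem:Newtool-00} then takes four coloured copies of every cycle uniformly and splits each extra edge into two $2$-cycles.
\end{itemize}
Each block thus yields exactly four HOP starters, and neither the divisibility problem nor the diameter problem arises.
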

\begin{proof}  
Notice that  $n=18k+9=9(2k+1)$, and we have $K_{9(2k+1)}=H_1\oplus H_2$, where $H_1$ is a disjoint union of $2k+1$ copies of $K_{9}$, and $H_2$ is a complete equipartite graph with $2k+1$ parts of size $9$. In Lemma ~\ref{lems-1starter}(i) \cite{LDMSaj}, it is proved that $4K_{9}^{\bullet}$ admits an HOP $(C_2, C_3,C_4)$-factorization. Consequently, by Lemma \ref{Gtool3}, $4H_1^{\bullet}$ admits an HOP  $(C_2, C_3,C_4)$-decomposition. Hence,  we may assume $k\geq 1$ and it suffices to show that $4K_{(2k+1)[9]}^{\bullet}$ admits an HOP $(C_2, C_3,C_4)$-decomposition.

The complete equipartite graph $K_{(2k+1)[9]}$ can be viewed as a circulant graph, Circ$(n; S)$, with the vertex set $\{x_i: i\in \ZZ_n\}$, and edge set $\{x_ix_{i+d}: i\in \ZZ_n, d\in S\}$, where 
$$S=\{1,2,\ldots, 2k, 2k+2, \ldots,4k+1, 4k+3,\ldots,6k+2, 6k+4, \ldots,8k+3, 8k+5, \ldots,9k+4 \}.$$
Let  $\rho=(x_0\ x_1\ \ldots \ x_{n-1})$ be a permutation on $V(K_{(2k+1)[9]})$.
Observe that $S$ contains  all differences from $1$ to $9k+4$, except for the differences  $2k+1, 4k+2, 6k+3, 8k+4$.
 Therefore, the group $\langle \rho \rangle$ acting on $E(K_{(2k+1)[9]})$ has $9k$ orbits, each of size $n=18k+9$.

First, we construct subgraphs $F_1,F_2, \ldots, F_k$, where each $F_i$ is a disjoint union of a $(C_3,C_4)$-subgraph  and two edge-disjoint 1-regular subgraphs of order 2, such that $F_1,F_2, \ldots, F_k$ jointly contain exactly one edge of each difference in $S$.

Now, for $i\in\{1,2,\ldots,k\}$, construct the following closed walks:
$$ C_i= x_{0}\ x_{2i-1}\ x_{-1}\ x_{4k+2i+1}\ x_{0};$$
$$ C_i'= x_{-2}\ x_{2k+i-1}\ x_{-(6k+i+5)}\ x_{-2}.$$
It can be seen that $C_i$ and $ C_i'$ are  disjoint cycles of lengths four and three, respectively.
The 4-cycle $C_i$  traverses edges of differences 
$$C_i: 2i-1, 2i, 4k+2i+2, 4k+2i+1.$$ 
%
The 3-cycle $C_i'$  traverses edges of differences 
$$C_i': 2k+i+1, d, 6k+i+3.$$ 
Notice that, for $i\in\{1,2,\ldots,\floor{\frac{k}{2}}\}$, the difference $d=8k+2i+4$, and for $i\in\{\floor{\frac{k}{2}}+1\ldots,k\}$, the difference $d=10k-2i+5$.

Observe that the subgraphs $C_i$ and $C_i'$, for $i\in\{1,2,\ldots,k\}$, jointly contain exactly one edge of each difference in $S$  except for the following differences:
\begin{enumerate}[(i)]
\item $4k-i+2$, for $i\in\{1,2,\ldots,k\}$; and
\item $8k-i+4$, for $i\in\{1,2,\ldots,k\}$.  
\end{enumerate}
Note that there are $2k$ differences from $S$ that are not covered by any $C_i$ or $C_i'$. The following subgraphs of $K_n$ (isomorphic to $K_2$) cover these leftover differences:
\begin{itemize}
\item $E_i=x_{-3}\ \ x_{-(4k-i+5)}$, for $i\in\{1,2,\ldots,k\}$; and
\item $E_i'=x_{-(4k+4)}\ \ x_{4k-i}$, for $i\in\{1,2,\ldots,k\}$.
\end{itemize}
Notice that none of the vertices in the intervals \([x_{3k}, x_{4k+2}]\) and \([x_{-(6k+5)}, x_{-3}]\) are used in any \(C_i\) or \(C_i'\), and only vertices in these intervals are used to construct the subgraphs \(E_i\) and \(E_i'\). Hence, \(C_i\) and \(C_i'\) are disjoint from \(E_i\) and \(E_i'\).

For \(i \in \{1, 2, \ldots, k\}\), let \(F_i = C_i \mathbin{\dot{\cup}} C_i' \mathbin{\dot{\cup}} (E_i \cup E_i')\). It is easy to verify that $F_1, F_2, \dots, F_k$ satisfy the required conditions.
\end{proof} 
\subsection{HOP Decompositions for $n\equiv 1 \text{ \normalfont{or }}5\ ({\rm mod}\ 20)$ }
\begin{lem}{\label{lem:m=10-Samll0}}
Let $n\equiv 1 \text{ \normalfont{or }} 5\ ({\rm mod}\ 20)$. Then the following decompositions of $4K_n^{\bullet}$ exist:
\begin{enumerate}[{\bf(i)}]
\item {\label{lem:m=10-Samll0-1}} an HOP $(C_2, C_2, C_2, C_2, C_2)$-decomposition;
\item {\label{lem:m=10-Samll0-2}} an HOP $(C_2, C_2, C_2, C_4)$-decomposition;
\item {\label{lem:m=10-Samll0-3}} an HOP $(C_2, C_2, C_3, C_3)$-decomposition; and
\item {\label{lem:m=10-Samll0-4}} an HOP $(C_2, C_2, C_6)$-decomposition.
\end{enumerate} 
\end{lem}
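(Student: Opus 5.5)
The plan is to reduce every part to a known decomposition of $K_n$ into cycles of length at least $3$ and then apply Corollary~\ref{lem:Newtool-01}. The corollary converts the even cycles in such a decomposition into $C_2$'s: an even cycle of length $2r$ yields $r$ copies of $C_2$. Here $n\equiv 1$ or $5 \pmod{20}$, so $n$ is odd and $n(n-1)\equiv 0\pmod{20}$. Hence $10$ divides $\binom{n}{2}$, and in every case $n\geq 21$ or $n=5$. The case $n=5$ with $m=10$ is excluded by the requirement $m\leq n$; in Theorem~\ref{theo:HOP(Cm1,...,Cmt)-dec} with $m=10$ and $s\geq0$ we need $n\geq 10$. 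So we may assume $n\geq 21$, or treat $n\equiv 5\pmod{20}$ with $n\geq 25$.

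\textbf{(i)} and \textbf{(ii).} For (i), Theorem~\ref{theo:Cm-dec} gives a $(C_{10})$-decomposition of $K_n$, since $10\mid\binom n2$ and $10\leq n$ with $n$ odd. Corollary~\ref{lem:Newtool-01} with the single even cycle $C_{10}$ then yields an HOP $(C_2^{\langle 5\rangle})$-decomposition of $4K_n^{\bullet}$. For (ii), Theorem~\ref{theo:(Cm1,...,Cmt)-dec} gives a $(C_4,C_6)$-decomposition of $K_n$, because $m=10\leq n$, $n$ is odd, $10\mid\binom n2$, and $(4,6)$ is not an exceptional pair. Applying Corollary~\ref{lem:Newtool-01} with only $C_6$ counted among the even cycles turns $C_6$ into $C_2^{\langle 3\rangle}$ and keeps $C_4$. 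This gives an HOP $(C_2,C_2,C_2,C_4)$-decomposition.

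\textbf{(iii)} and \textbf{(iv).} For (iii), take a $(C_3,C_3,C_4)$-decomposition of $K_n$ from Theorem~\ref{theo:(Cm1,...,Cmt)-dec}, apply the corollary to the $C_4$, and obtain $(C_2,C_2,C_3,C_3)$. For (iv), take a $(C_4,C_6)$-decomposition again and this time apply the corollary to $C_4$ only, giving $(C_2,C_2,C_6)$.

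The one point to verify is that Corollary~\ref{lem:Newtool-01} lets us choose which even cycles to split. Its statement only requires that $m_1,\dots,m_s$ be even, so we reorder the cycles so that the chosen even ones come first; the underlying Lemma~\ref{lem:Newtool-00} allows $H$ to contain even cycles. I expect this bookkeeping, together with checking the hypotheses of Theorem~\ref{theo:(Cm1,...,Cmt)-dec} (excluding $n=9$ is irrelevant here), to be the only real obstacle. The reduction itself is routine.
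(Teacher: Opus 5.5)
Your proposal is correct and is the paper's own argument. The paper likewise takes a $(C_{10})$-decomposition of $K_n$ from Theorem~\ref{theo:Cm-dec}, and $(C_4,C_6)$- and $(C_3,C_3,C_4)$-decompositions from Theorem~\ref{theo:(Cm1,...,Cmt)-dec}, then applies Corollary~\ref{lem:Newtool-01} to split the $C_{10}$, the $C_6$, the $C_4$, and the $C_4$ respectively. Your remark that $n=5$ must be excluded is a fair point the paper passes over silently: no $10$-vertex subgraph fits in $K_5$, so the statement as written fails there, and the paper's table only invokes the lemma for $n=20k+5$ with $k\geq 1$.
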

\begin{proof}
By Theorems \ref{theo:Cm-dec} and \ref{theo:(Cm1,...,Cmt)-dec}, we know there exist a $(C_{10})$-decomposition, a $(C_6, C_4)$-decomposition, a $(C_4, C_3, C_3)$-decomposition, and a $(C_4, C_6)$-decomposition of $K_n$, and by Corollary~\ref{lem:Newtool-01}, these decompositions give rise  to  HOP decompositions listed in (i)-(iv), respectively.
\end{proof}
%
%
\begin{lem}{\label{lem:m=10-Samll1}}
Let $n=20k+1$ for any integer $k\geq 1$. There exists an HOP $(C_2, C_4, C_4)$-decomposition of $4K_n^{\bullet}$. 
\end{lem}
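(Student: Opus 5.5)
The plan is to follow the same cyclic-starter strategy as in Lemmas~\ref{lem:m=8-Samll1} and~\ref{lem:m=9-Samll0}. Let $V(K_n)=\{x_i: i\in\ZZ_n\}$ and $\rho=(x_0\ x_1\ \ldots\ x_{n-1})$. Every edge of $K_n$ has a difference in $\{1,2,\ldots,10k\}$, and $\langle\rho\rangle$ has $10k$ orbits on $E(K_n)$, each of size $n$. I will construct $k$ starter subgraphs $F_1,\ldots,F_k$. Each $F_i$ will be a disjoint union of a $(C_4,C_4)$-subgraph and two edge-disjoint $1$-regular subgraphs of order $2$ (single edges), and together the $F_i$ will contain exactly one edge of each difference in $\{1,\ldots,10k\}$. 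Then $\D=\{\rho^j(F_i): j\in\ZZ_n,\ 1\le i\le k\}$ is a decomposition of $K_n$ of the type required in Lemma~\ref{lem:Newtool-00} (with $t=2$, $m_1=m_2=4$, $\alpha=1$). That lemma then yields an HOP $(C_2,C_4,C_4)$-decomposition of $4K_n^{\bullet}$.

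For the $4$-cycles I will reuse the quadrangle from Lemma~\ref{lem:m=9-Samll0}. For $j\ge1$ let $Q_j=x_0\ x_{4j-3}\ x_{-1}\ x_{4j-1}\ x_0$, whose edges have differences $4j-3,4j-2,4j,4j-1$. The cycles $Q_1,\ldots,Q_{2k}$ therefore use each difference in $\{1,\ldots,8k\}$ exactly once. Since translating a cycle by $\rho$ does not change its differences, I take $C_i=Q_i$ and let $C_i'$ be the translate of $Q_{k+i}$ by $-2$:
$$C_i'=x_{-2}\ x_{4(k+i)-5}\ x_{-3}\ x_{4(k+i)-3}\ x_{-2},\qquad i\in\{1,\ldots,k\}.$$
The vertices of $C_i$ lie in $\{x_{-1},x_0\}\cup[x_1,x_{4k-1}]$, and those of $C_i'$ lie in $\{x_{-3},x_{-2}\}\cup[x_{4k-1},x_{8k-5}]$. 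The only possible overlap is at the boundary index, and I will check it directly. Note that $4k-1$ arises in $C_i$ only when $i=k$, while $C_i'$ has smallest index $4k-1$ only when $i=1$. If these collide (for $k=1$), I will shift the translate by a different constant, such as $-4$, so that the two index ranges are separated.

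The differences still to be covered are $8k+1,\ldots,10k$, which are $2k$ differences, two for each $F_i$. I will cover them with edges from a fixed hub vertex such as $x_{-5}$ (or $x_{-6}$, chosen to avoid the cycles). Set $E_i=x_{-5}\,x_{8k+i-5}$ and $E_i'=x_{-5}\,x_{9k+i-5}$ for $i\in\{1,\ldots,k\}$; these have differences $8k+i$ and $9k+i$. Their other endpoints lie in $[x_{8k-4},x_{10k-5}]$, which lies beyond the cycle vertices and well before $x_{-5}=x_{20k-4}$. Hence $E_i$ and $E_i'$ are vertex-disjoint from $C_i\cup C_i'$. They are distinct edges, so they are edge-disjoint, and each is a $1$-regular subgraph of order $2$, exactly as in the earlier lemmas. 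I then set $F_i=C_i\mathbin{\dot{\cup}}C_i'\mathbin{\dot{\cup}}(E_i\cup E_i')$.

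The main obstacle I expect is the bookkeeping at the interval boundaries. I must confirm that every vertex index is distinct modulo $n$, in particular for small $k$ such as $k=1$ (where $n=21$), and that the shift chosen for $C_i'$ really makes it vertex-disjoint from $C_i$. If the naive offsets collide, I will adjust the translation constants (moving $C_i'$ to the negative side, e.g.\ using $x_{-2},x_{-3}$ and the interval of vertices $x_{-(\cdot)}$) and re-verify that the difference counts are unchanged, since translations preserve differences.
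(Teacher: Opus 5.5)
The overall strategy is the same as the paper's: cyclic starters under $\rho$ of order $n$, fed into Lemma~\ref{lem:Newtool-00} with $\alpha=1$. Your difference bookkeeping is also correct: $Q_1,\ldots,Q_{2k}$ cover $\{1,\ldots,8k\}$, and $E_i,E_i'$ cover $8k+i$ and $9k+i$.

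\textbf{The gap.} The vertex-disjointness argument is wrong, and the construction actually fails for $k=1$ and $k=2$. You claim $V(C_i')\subseteq\{x_{-3},x_{-2}\}\cup[x_{4k-1},x_{8k-5}]$. But $C_k'$ contains $x_{4(2k)-3}=x_{8k-3}$, which lies inside the interval $[x_{8k-4},x_{10k-5}]$ where your edge endpoints live.
\begin{itemize}
\item Within a single $F_i$, the endpoint $x_{8k+i-5}$ of $E_i$ equals the vertex $x_{4k+4i-3}$ of $C_i'$ exactly when $3i=4k-2$, which happens for $k=i=2$. In $K_{41}$ you get $C_2'=x_{-2}\,x_{11}\,x_{-3}\,x_{13}\,x_{-2}$ and $E_2=x_{-5}\,x_{13}$. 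So $x_{13}$ has degree $3$ in $F_2$, and $F_2$ is not of the form Lemma~\ref{lem:Newtool-00} requires.
\item Likewise $x_{9k+i-5}=x_{4k+4i-3}$ when $3i=5k-2$, i.e.\ $k=i=1$. There $E_1'=x_{-5}\,x_5$ meets $C_1'=x_{-2}\,x_3\,x_{-3}\,x_5\,x_{-2}$, on top of the collision $C_1\cap C_1'=\{x_3\}$ that you did notice.
\item Your proposed remedy for $k=1$ does not work. Translating $Q_{k+i}$ by $-4$ gives vertices $x_{-4},x_{4(k+i)-7},x_{-5},x_{4(k+i)-5}$. For $k=i=1$ these are $x_{-4},x_1,x_{-5},x_3$, which meet $C_1$ in two vertices. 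For every $k$ this translate also contains your hub $x_{-5}$.
\end{itemize}

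\textbf{Repair.} For $k\ge 3$ your construction is valid as written: in each $F_i$ the indices satisfy $4i-3<4i-1<4k+4i-5<4k+4i-3<8k+i-5<9k+i-5$, and the negative-side vertices are distinct from these. For all $k\ge 2$ you can instead attach $E_{k+1-i}$ and $E_{k+1-i}'$ to $F_i$. Their endpoints $x_{9k-i-4}$ and $x_{10k-i-4}$ would meet $C_i'$ only if $5i\in\{5k+1,\,5k-1,\,6k+1,\,6k-1\}$. The first two are impossible modulo $5$, and the last two would force $i>k$ or $k\le 1$. For $k=1$ you need a separate explicit starter, for example in $K_{21}$:
\begin{itemize}
\item the $4$-cycles $x_0x_1x_{20}x_3x_0$ and $x_5x_{10}x_4x_{12}x_5$;
\item the edges $x_7x_{16}$ and $x_7x_{17}$.
\end{itemize}

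\textbf{How the paper avoids exceptions.} The paper uses $C_i=x_{1-i}\,x_i\,x_{-(2k+i)}\,x_{4k+i}\,x_{1-i}$ for $i\in\{1,\ldots,2k\}$, paired as $C_{2i-1},C_{2i}$. All their vertices lie in $[x_{-4k},x_{2k}]\cup[x_{4k+1},x_{6k}]$. The leftover differences $2i$ and $8k+2i-1$ are covered by edges from the hub $x_{6k+1}$ inside the untouched interval $[x_{6k+1},x_{16k}]$. This works uniformly for every $k\ge1$, with no special cases.
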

\begin{proof}  
Let \( V(K_n) = \{x_i : i \in \mathbb{Z}_n\} \), and let \( \rho = (x_0\ x_1\ \ldots\ x_{n-1}) \).
Each edge in \( E(K_n) \) is of the form \( x_i x_{i+d} \), where \( d \in \{1, 2, \ldots, 10k\} \); thus,  \( \langle \rho \rangle \) has \( 10k \) orbits, each of size \( n \).

We construct subgraphs \( F_1, F_2, \ldots, F_k \), where each \( F_i \) is a disjoint union of a \( (C_4, C_4) \)-subgraph and two edge-disjoint 1-regular subgraphs of order 2, such that \( F_1, F_2, \ldots, F_k \) jointly contain exactly one edge of each difference in \( \{1, 2, \ldots, 10k\} \). Therefore, \( \mathcal{D} = \{\rho^i(F_1), \rho^i(F_2), \ldots, \allowbreak \rho^i(F_k) : i \in \mathbb{Z}_n\} \) is a decomposition of \( K_n \) that satisfies the condition of Lemma~\ref{lem:Newtool-00} (for \( \alpha = 1 \), \( m_1 = 4 \), and \( m_2 = 4 \)); hence,  the multigraph \( 4K_n^{\bullet} \) admits an HOP \( (C_2, C_4, C_4) \)-decomposition.

Now, for $i\in\{1,2,\ldots,2k\}$, construct the following closed walk:
$$ C_i= x_{1-i}\ x_{i}\ x_{-(2k+i)}\ x_{4k+i}\ x_{1-i}.$$
It can be seen that $C_i$ is a cycle of length four, and  traverses edges of differences 
$$C_i: 2i-1, 2k+2i, 6k+2i, 4k+2i-1.$$ 
%
%
%
Observe that the $C_i$, for $i\in\{1,2,\ldots,2k\}$, jointly contain exactly one edge of each difference in $\{1,2,\ldots, 10k\}$  except for differences $2i$ and $8k+2i-1$, for $i\in\{1,2,\ldots,k\}$.
The following subgraphs cover these leftover differences:
\begin{enumerate}[\bf(i)]
\item $E_i=x_{6k+1}\ \ x_{6k+2i+1}$, for $i\in\{1,2,\ldots,k\}$;
\item $E_i'=x_{6k+1}\ \ x_{-(6k-2i+1)}$, for $i\in\{1,2,\ldots,k\}$.
\end{enumerate}
Notice that none of the vertices in the interval \([x_{6k+1}, x_{16k}]\) are used in any \(C_i\), and only vertices in this interval are used to construct the subgraphs \(E_i\) and \(E_i'\). Hence, for \(i \in \{1, 2, \ldots, k\}\), cycles \(C_{2i-1}\) and \(C_{2i}\) are disjoint from the subgraphs \(E_i\) and \(E_i'\).

Now, for $i\in\{1,2,\ldots,k\}$, let $F_i= C_{2i-1}\mathbin{\dot{\cup}} C_{2i} \mathbin{\dot{\cup}} (E_{i} \cup E_{i}')$. It is easy to verify that $F_1, F_2, \dots, F_k$ satisfy the required conditions.
\end{proof} 


\begin{lem}{\label{lem:m=10-Samll2}}
Let $n=20k+1$ for any integer $k\geq 1$. There exists an HOP $(C_2, C_3, C_5)$-decomposition of $4K_n^{\bullet}$. 
\end{lem}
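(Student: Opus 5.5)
We follow the same template as Lemmas~\ref{lem:m=8-Samll1}, \ref{lem:m=9-Samll0} and \ref{lem:m=10-Samll1}. Let $V(K_n)=\{x_i: i\in\ZZ_n\}$ and $\rho=(x_0\ x_1\ \ldots\ x_{n-1})$. Since $n=20k+1$, the group $\langle\rho\rangle$ has $10k$ edge orbits, one for each difference in $\{1,\ldots,10k\}$, and each orbit has size $n$. We aim to build $k$ subgraphs $F_1,\ldots,F_k$ of $K_n$. Each $F_i$ will be the disjoint union of a $3$-cycle $C_i'$, a $5$-cycle $C_i$, and two edge-disjoint single edges $E_i,E_i'$. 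Together these use $3+5+2=10$ edges per $F_i$, so $10k$ edges in total. We require that $F_1,\ldots,F_k$ jointly contain exactly one edge of each difference in $\{1,\ldots,10k\}$. Then $\{\rho^j(F_i)\}$ is a decomposition of $K_n$ into copies of $(C_3,C_5)\mathbin{\dot{\cup}}H'$, where $H'$ is a union of two edge-disjoint $1$-regular graphs of order $2$. Lemma~\ref{lem:Newtool-00}, applied with $\alpha=1$, then gives the HOP $(C_2,C_3,C_5)$-decomposition of $4K_n^{\bullet}$.

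For the $5$-cycles, I would reuse the pattern of Lemma~\ref{lem:m=10-Samll1}, namely $x_{1-i}\,x_i\,x_{-(2k+i)}\,\ldots$, which produces differences $2i-1,\ 2k+2i,\ \ldots$. I would append one extra vertex so the cycle closes with a fifth edge. A concrete first attempt is
\[
C_i = x_{1-i}\ x_i\ x_{-(2k+i)}\ x_{4k+i}\ x_{y_i}\ x_{1-i}, \qquad i\in\{1,\ldots,k\}.
\]
Here $y_i$ is chosen linear in $i$, for example $y_i=-(6k+3i)$, so the last two differences run through two arithmetic progressions. The first three edges cover $2i-1$, $2k+2i$ and $6k+2i$. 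For the $3$-cycles, I would imitate $C_i'$ from Lemma~\ref{lem:m=9-Samll0}: $C_i'=x_a\,x_{b+2i}\,x_{-(c+2i+1)}\,x_a$, with a base vertex $x_a$ (for instance $x_{-2}$) lying outside the $C_i$'s. Its differences form two consecutive arithmetic progressions of step $2$ together with a third difference of step $4$, which may wrap around past $10k$ as happens in Lemma~\ref{lem:m=9-Samll0}. The constants $a,b,c$ are chosen so that the $C_i'$ fill the progressions left uncovered by the $C_i$.

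Next I list the $2k$ leftover differences. They should form two arithmetic families of size $k$. I cover them with star-like edges $E_i=x_u\,x_{u\pm d_i}$ and $E_i'=x_{u'}\,x_{u'\pm d_i'}$, using a hub vertex $x_u$ and an interval of vertices untouched by all $C_j$ and $C_j'$, as in the previous lemmas. Vertex-disjointness inside each $F_i$ is then checked by interval arguments. Note that $E_i$ and $E_i'$ may share the hub vertex, since they only need to be edge-disjoint and disjoint from the cycles.

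The main obstacle is the bookkeeping of differences. One must choose $y_i$ and the $3$-cycle parameters so that, together with $2i-1$, $2k+2i$ and $6k+2i$, the remaining families partition $\{1,\ldots,10k\}$ exactly. This includes handling the mod-$n$ wraparound, where a difference $d>10k$ is read as $n-d$, which causes the floor-type case splits seen in the earlier lemmas. Small $k$, especially $k=1$ and $k=2$, may also break the interval-disjointness argument. I would first try to make the $3$-cycles absorb the differences $2i$ and $8k+2i-1$, which were exactly the leftovers in Lemma~\ref{lem:m=10-Samll1}. The remaining parities would then go to the $E$-edges. If a few small values of $k$ fail, I would treat them separately with explicit starters.
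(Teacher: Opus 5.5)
Your framework is exactly the paper's. You act cyclically by $\rho$ on $\mathbb{Z}_{20k+1}$ and take $k$ starters, each a $3$-cycle, a $5$-cycle and two edge-disjoint single edges (which may share a hub vertex). These jointly hit each difference in $\{1,\ldots,10k\}$ once, and Lemma~\ref{lem:Newtool-00} with $\alpha=1$ finishes. But the whole content of the lemma is the explicit choice of starters, and your proposal never supplies one. The parameters $a,b,c$, the edges $E_i,E_i'$, and the treatment of small $k$ are all left open, so nothing has actually been proved.

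Moreover, the one concrete choice you make fails. With $y_i=-(6k+3i)$, the edge $x_{4k+i}x_{-(6k+3i)}$ has difference $10k+4i$, which is $10k+1-4i$ after reduction mod $n$. The closing edge $x_{-(6k+3j)}x_{1-j}$ has difference $6k+2j+1$. These coincide whenever $j=2k-2i\in\{1,\ldots,k\}$, and that happens for every $k\ge 2$; for example, with $k=2$ both $C_1$ and $C_2$ contain difference $17$. So your $5$-cycles are not even difference-disjoint among themselves.

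Your heuristic of letting the triangles absorb $2i$ and $8k+2i-1$ is borrowed from Lemma~\ref{lem:m=10-Samll1}, but there the template ran over $2k$ indices. With only $k$ five-cycles, the set left uncovered by their first three edges is different. It contains, for instance, all of $4k+1,\ldots,6k+1$ and the odd values $2k+1,\ldots,4k-1$. You give no argument that what remains splits into the required families.

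What is needed is a concrete, verified partition of $\{1,\ldots,10k\}$. The paper uses the following starters:
\begin{itemize}
\item the $5$-cycle $C_i=x_0\,x_{2i-1}\,x_{-1}\,x_{2k+i-1}\,x_{-(3k+i)}\,x_0$, with differences $2i-1,\,2i,\,2k+i,\,5k+2i-1,\,3k+i$;
\item the $3$-cycle $C_i'=x_{-(k+i)}\,x_{3k}\,x_{-(8k-i+2)}\,x_{-(k+i)}$, with differences $4k+i,\,9k+i-1,\,7k-2i+2$;
\item the hub edges $E_i=x_{-5k}x_{8k-i+1}$ for $1\le i\le 2k-1$ (difference $7k+i$) and $E_{2k}=x_{-5k}x_{5k}$ (difference $10k$).
\end{itemize}
It then sets $F_i=C_i\mathbin{\dot{\cup}}C_i'\mathbin{\dot{\cup}}(E_{2i-1}\cup E_{2i})$. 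The bookkeeping closes because the block $5k+1,\ldots,7k$ is shared by parity between the two cycles: $5k+2i-1$ comes from the $5$-cycle and $7k-2i+2$ from the $3$-cycle. That leaves only the contiguous block $7k+1,\ldots,9k-1$ together with $10k$ for the $2k$ single edges through one hub. Vertex-disjointness is then a simple interval check, and the construction works uniformly for every $k\ge 1$, with no exceptional small cases.
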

\begin{proof}  
Let $V(K_n)=\{x_i: i\in \ZZ_n\}$, and let  $\rho=(x_0\ x_1\ \ldots \ x_{n-1})$.
Each edge in $E(K_n)$ is of the form $x_ix_{i+d}$, where $d\in \{1,2,\ldots, 10k\}$; thus, $\langle \rho \rangle$ has $10k$ orbits, each of size $n$.

We construct subgraphs $F_1,F_2, \ldots, F_k$, where each $F_i$ is a disjoint union of a $(C_3, C_5)$-subgraph and two edge-disjoint $1$-regular subgraphs of order $2$, and $F_1,F_2, \ldots, F_k$ jointly contain exactly one edge of each difference in $\{1,2,\ldots, 10k\}$.  Therefore, $\D=\{\rho^i(F_1), \rho^i(F_2), \ldots,\allowbreak \rho^i(F_k): i\in \ZZ_n\}$ is a decomposition for $K_n$ that satisfies the condition of  Lemma \ref{lem:Newtool-00} (for $\alpha=1$,  $m_1=3$, and $m_2=5$); hence, the multigraph $4K_n^{\bullet}$ admits an HOP $(C_2, C_3, C_5)$-decomposition.

Now, for $i\in\{1,2,\ldots,k\}$, construct the following closed walks:
$$ C_i= x_{0}\ x_{2i-1}\ x_{-1}\ x_{2k+i-1}\ x_{-(3k+i)}\ x_{0};$$
$$ C_i'= x_{-(k+i)}\ x_{3k}\ x_{-(8k-i+2)}\ x_{-(k+i)}.$$
It can be seen that $C_i$ and $C_i'$ are disjoint cycles of length  five and three, respectively.
The 5-cycle $C_i$  traverses edges of differences: 
$$C_i: 2i-1, 2i, 2k+i, 5k+2i-1, 3k+i.$$ 
%
%
The 3-cycle $C_i'$  traverses edges of differences: 
$$C_i': 4k+i, 9k+i-1, 7k-2i+2.$$ 
%
Observe that, for $i\in\{1,2,\ldots,k\}$, $C_i$ and $C_i'$ jointly contain exactly one edge of each difference $\{1,2,\ldots, 10k\}$  except for differences $10k$ and  $7k+i$, for $i\in\{1,2,\ldots,2k-1\}$. 
The following subgraphs cover these leftover differences:
\begin{itemize}
\item $E_i=x_{-5k}\ \ x_{8k-i+1}$, for $i\in\{1,2,\ldots,2k-1\}$, and 
\item $E_{2k}=x_{-5k}\ \ x_{5k}$.
\end{itemize}
Notice that the vertex \(x_{-5k}\), as well as none of the vertices in the interval \([x_{3k+1}, x_{12k-1}]\), are used in any \(C_i\) or \(C_i'\), and only vertices in this interval are used to construct the subgraphs \(E_i\). Hence, for \(i \in \{1, 2, \ldots, k\}\), cycles \(C_i\) and \(C_i'\) are disjoint from the subgraphs \(E_{2i-1}\) and \(E_{2i}\).

Now, for \(i \in \{1, 2, \ldots, k\}\), let \(F_i = C_i \mathbin{\dot{\cup}} C_i' \mathbin{\dot{\cup}} (E_{2i-1} \cup E_{2i})\). It is easy to verify that $F_1, F_2, \dots, F_k$ satisfy the required conditions.
\end{proof} 

\begin{lem}{\label{lem:m=10-Samll3}}
Let $n=20k+5$ for any integer $k\geq 1$. There exists an HOP $(C_2, C_8)$-decomposition of $4K_n^{\bullet}$. 
\end{lem}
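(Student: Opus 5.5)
The natural model is Lemma~\ref{lem:m=9-Samll1}. Write $n=20k+5=5(4k+1)$ and split $K_n=H_1\oplus H_2$, where $H_1$ is the disjoint union of $4k+1$ copies of $K_5$ and $H_2=K_{(4k+1)[5]}$. Unfortunately $K_5$ has only $10$ edges and cannot host a $(C_2,C_8)$-subgraph, so this split is not directly usable. Instead I would use $n=4k'+5$ with parts of size $5$ differently: view $K_n$ as $\mathrm{Circ}(n;\pm\{1,\ldots,10k+2\})$ with $\rho=(x_0\ x_1\ \ldots\ x_{n-1})$. The $\langle\rho\rangle$-orbits then number $10k+2$, each of size $n$. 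However, a $(C_8)$ plus two $1$-regular subgraphs of order $2$ uses $10$ edges, and $10\nmid 10k+2$, so a purely cyclic starter with Lemma~\ref{lem:Newtool-00} fails as well.

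I would therefore work with the $\infty$-model of Lemma~\ref{lem:4Kn-d}. Put $V(4K_n^{\bullet})=\{x_i:i\in\ZZ_{n-1}\}\cup\{x_\infty\}$ with $n-1=20k+4$, so the differences are $1,\ldots,10k+2$ (the last one, $10k+2=\frac{n-1}{2}$, is the diameter difference) together with $\infty$. Counting colored edges, the orbits to be covered are: pink and blue for each $d\le 10k+1$ and for $\infty$; black for each $d\in\{1,\ldots,20k+3\}$ and the two $\infty$ black orbits; and the short diameter orbits. By (B3) we need exactly one edge from each full orbit, which is $2(10k+1)+2+(20k+3)+2=40k+9$ edges. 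Adding the two diameter edges from (B2) gives $40k+11$ edges in total. This is not divisible by $10$ (the edge count of a $(C_2,C_8)$-subgraph). So the count of edges in the $F_i$'s, $4\binom n2/(n-1)$ per rotation class, must be rechecked: $4\binom n2=2n(n-1)$, and dividing by $n-1$ gives $2n=40k+10$. The correct total is therefore $40k+10$, with $s=4k+1$ starters. The black orbits of difference $d$ and $n-1-d$ are distinct, so the black count is $20k+3$ and the diameter black orbit is included in that count; the tally above double-counted it.

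The plan is to mimic Lemma~\ref{lem:m=6-Samll1}. First build, in $K_n$ (uncolored), about $2k$ ``base'' subgraphs $G_i=C_i\mathbin{\dot\cup}E_i$, each an $8$-cycle with a disjoint $2$-cycle, such that the $G_i$ together cover every difference in $\{1,\ldots,10k+1,\infty\}$ exactly twice, apart from a few leftover differences. A natural choice is $8$-cycles of the form $x_0\,x_a\,x_{a+b}\,\ldots$ that use each of four differences twice, symmetric under the half-turn $x_j\mapsto x_{j+10k+2}$, with the $2$-cycles placed on $x_\infty$ or on a spare vertex. Each $G_i$ is then doubled into two colored copies in the same way as in Lemma~\ref{lem:m=6-Samll1}:
\begin{itemize}
\item in the first copy, the $2$-cycle is pink/blue, and in the $8$-cycle one edge is pink, the antipodal edge is blue, and the remaining edges are black, oriented away from pink and towards blue;
\item in the second copy, the $2$-cycle is a black opposite pair, and the roles of the edges in the $8$-cycle are shifted so that each difference receives one pink, one blue, and both black orientations.
\end{itemize}
The $8$-cycle admits this pattern because condition (C1) allows black runs between a single pink and a single blue edge. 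One final starter $F_s$ handles the leftover differences and the diameter: its $2$-cycle uses two copies of the diameter difference, as (B2) requires, and its $8$-cycle covers the few remaining differences, each with all four colors.

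The main obstacle is the explicit difference bookkeeping: choosing the $8$-cycles so that the doubled differences exactly tile $\{1,\ldots,10k+1,\infty\}$ except for a controlled set, keeping the $2$-cycles vertex-disjoint from their $8$-cycles, and closing the leftover set with one $F_s$. I would first try $8$-cycles $x_0\,x_i\,x_{c}\,x_{c'}\,x_{10k+2}\,\ldots$ built from two reflected copies of a $4$-path, so that each difference appears twice automatically.
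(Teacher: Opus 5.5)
There is a genuine gap: the proposal stops exactly where the proof begins. For a lemma of this kind the explicit starters are the whole content, and you give none. Concretely, you need exactly $2k$ base subgraphs $G_i$, not ``about $2k$''. Their $8$-cycles must carry $8k$ of the differences in $\{1,\ldots,10k+1,\infty\}$ twice each; for a half-turn-invariant cycle, the four signed steps must also sum to $10k+2$ modulo $20k+4$. Their vertex-disjoint $2$-cycles must carry $2k$ more, leaving exactly two differences for $F_s$. The $8$-cycle of $F_s$ must then carry each of those two differences four times, with one edge in each of the pink, blue, forward-black and backward-black classes. None of this is shown to exist. The last requirement is not automatic: the paper's own $8$-cycles, which use two differences four times each, admit no colouring satisfying (C1) with this property.

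The colouring step as you describe it also fails by a simple count. An $8$-cycle carrying four differences twice each needs, over its two copies, one pink and one blue edge per difference, i.e.\ four pink and four blue edges. Your scheme puts one pink edge and an antipodal blue edge in each copy, with everything else black; your justification explicitly invokes ``a single pink and a single blue edge.'' That supplies only two of each and twelve black edges, so at least two of the four differences get no pink edge.

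This point can be repaired. Write the half-turn-invariant cycle as $e_0e_1\cdots e_7$, with $e_{j+4}$ the translate of $e_j$. Keep your first copy. In the second copy, colour $e_0,e_4$ black and colour $e_1,e_2,e_3,e_5,e_6,e_7$ pink, blue, pink, blue, pink, blue. This satisfies (C1) and forces $e_0,e_4$ to have opposite orientations, so every difference receives one edge of each class.

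The paper avoids all of this bookkeeping by working in $2K_n^{\circ}$ with Lemma~\ref{lem:2Kn-d}:
\begin{itemize}
\item It takes the $8$-cycles $x_0\,x_i\,x_{-(5k+1)}\,x_{10k+2-i}\,x_{10k+2}\,x_{-(10k+2-i)}\,x_{5k+1}\,x_{-i}\,x_0$ for $1\le i\le 4k+1$. Each uses differences $i$ and $5k+1+i$ four times, in half-turn pairs, and is coloured alternately pink and black.
\item It adds $2$-cycles paired by $\rho_{\circ}^{(n-1)/2}$ covering $4k+1+j$, $9k+2+j$ and $\infty$, plus one diameter $2$-cycle.
\end{itemize}
Lemma~\ref{lem:Gtool2} then supplies the blue, pink and oriented-black structure automatically.
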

\begin{proof}  
Let $s=4k+1$. By Lemma \ref{lem:2Kn-d} with $m_1=2$ and $m_2=8$, it suffices to find $(C_2,C_8)$-subgraphs $F_1,F_2,\ldots, F_s$ in $2K_n^{\circ}$ that satisfy Conditions (A1) to (A3).

As in Lemma \ref{lem:2Kn-d}, let $V(2K_n^{\circ})=\{x_i: i\in \ZZ_{n-1} \}\cup \{x_{\infty}\}$.  Let $\rho_{\circ}$ be the permutation on $E(2K_n^{\circ})$ that preserves the color of the edges, and  is induced by  the permutation  $\rho=(x_{\infty})(x_0 \ x_1\  x_2 \ \ldots \ x_{n-2})$. Observe that the group $\langle \rho_{\circ} \rangle$ has the following orbits on the edge set of $2K_n^{\circ}$:
\begin{itemize}
\item for each $d\in \{1,2,\ldots, 10k+1\}$, we have a pink and a black orbit $\{x_ix_{i+d}: i\in \ZZ_{n-1} \}$;
\item a pink and a black orbit $\{ x_ix_{i+(10k+2)}: i=0,1,\ldots,\frac{n-3}{2}\}$; and
\item a pink and a black orbit  $\{x_ix_{\infty}: i\in \ZZ_{n-1} \}$.
\end{itemize}
Let \( S = \{1, 2, \ldots, 10k + 2\} \) be the set of differences. Note that \( 10k + 2 \) is the diameter difference.
First, for $i\in\{1,2,\ldots, 4k+1\}$, construct the following closed walk:
$$ C_i= x_{0}\ x_{i}\ x_{-(5k+1)}\ x_{10k+2-i}\ x_{10k+2}\ x_{-(10k+2-i)}\ x_{5k+1}\ x_{-i}\ x_{0}.$$
It can be seen that $C_i$ is a cycle of length eight, and traverses edges of differences $i$ and $5k+i+1$ four times. Since these differences come from an orbit of size \( n - 1 \), the four edges occur as two pairs of the form \( \{e, \rho^{\tfrac{n-1}{2}}_\circ(e)\} \).

The 8-cycles $C_1, C_2,\ldots, C_{4k+1}$ jointly cover each of the following differences exactly four times:
$$1,2,\ldots, 4k+1, 5k+2, 5k+3,\ldots, 9k+1, 9k+2.$$
The differences that are not covered by any $C_i$ are  $9k+2+i$, $4k+1+i$, for $i\in\{1,2,\ldots, k\}$, and difference $\infty$. 
Next,  we construct cycles of length two that jointly cover each of the leftover differences exactly four times, except for the diameter difference $10k+2$, which should be covered exactly twice.
\begin{itemize}
\item The following 2-cycles cover the differences $9k+2+j$, for $j\in\{1,2,\ldots, k-1\}$, and the difference $\infty$ exactly four times. 
\begin{itemize}
\item  $E_j=x_{-(4k+1)}\ x_{5k+1+j}\ x_{-(4k+1)}$, for $j\in\{1,2,\ldots, k-1\}$, and  $E_k=x_{-(3k+2)}\ x_{\infty}\ x_{-(3k+2)}$. 
\item  $E_j'=x_{6k+1}\ x_{-(5k+1-j)}\ x_{6k+1}$, for $j\in\{1,2,\ldots, k-1\}$, and  $E_k'=x_{7k}\ x_{\infty}\ x_{7k}$. 
\end{itemize}
Notice that \(E_j' = \rho_{\circ}^{\frac{n-1}{2}}(E_j)\). Moreover, none of the vertices in the intervals \([x_{2k+1}, x_{5k}]\), \([x_{5k+2}, x_{8k+1}]\), \([x_{-(8k+1)}, x_{-(5k+2)}]\), and \([x_{-5k}, x_{-(2k+1)}]\) are used in any \(C_i\), for \(i \in \{1, 2, \allowbreak \ldots, 2k\}\), and only vertices in these intervals are used to construct the subgraphs \(E_j\) and \(E_j'\). Hence, for \(j \in \{1, 2, \ldots, k\}\) and \(i \in \{1, 2, \ldots, 2k\}\), the subgraphs \(E_j\), \(E_j'\), and \(C_i\) are pairwise disjoint. 

Relabel \(E_j\) and \(E_j'\) jointly as \(E_i\), for \(i \in \{1, 2, \ldots, 2k\}\). Now, for \(i \in \{1, 2, \ldots, 2k\}\), let \(F_i = C_i \mathbin{\dot{\cup}} E_i\). We see that \(F_1, F_2, \ldots, F_{2k}\) are \((C_2, C_8)\)-subgraphs of \(2K_n\).

\item The following 2-cycles cover difference the $4k+1+j$, for $j\in\{1,2,\ldots, k\}$, exactly four times. 
\begin{itemize}
\item  $E_j''=x_{-(4k+1)}\ x_{j}\ x_{-(4k+1)}$, for $j\in\{1,2,\ldots, k\}$.
\item  $E_j'''=x_{6k+1}\ x_{-(10k+2-j)}\ x_{6k+1}$, for $j\in\{1,2,\ldots, k\}$.
\end{itemize}
First, notice that \(E_j''' = \rho_{\circ}^{\frac{n-1}{2}}(E_j'')\). Moreover, none of the vertices in the intervals \([x_{1}, x_{2k}]\), \([x_{5k+2}, x_{6k+1}]\), \([x_{-(10k+1)}, x_{-(8k+2)}]\), and \([x_{-5k}, x_{-(4k+1)}]\) are used in any \(C_i\), for \(i \in \{2k+1, 2k+2, \ldots, 4k\}\), and only vertices in these intervals are used to construct the subgraphs \(E_j''\) and \(E_j'''\). Hence, the subgraphs \(E_j''\), \(E_j'''\), and \(C_i\) are pairwise disjoint.

Next, relabel \(E_j''\) and \(E_j'''\) jointly as \(E_i\), for \(i \in \{2k+1, 2k+2, \ldots, 4k\}\). Now, for \(i \in \{2k+1, 2k+2, \ldots, 4k\}\), let \(F_i = C_i \mathbin{\dot{\cup}} E_i\). We see that \(F_{2k+1}, F_{2k+2}, \ldots, F_{4k}\) are \((C_2, C_8)\)-subgraphs of \(2K_n\).

\item The only difference that is not covered yet is the diameter difference \(10k + 2\). Let \(E_{4k+1} = x_{1}\ x_{-(10k+1)}\ x_{1}\), and observe that \(E_{4k+1}\) is disjoint from \(C_{4k+1}\). Now, let \(F_{4k+1} = C_{4k+1} \mathbin{\dot{\cup}} E_{4k+1}\). We see that \(F_{4k+1}\) is a \((C_2, C_8)\)-subgraph of \(2K_n\).

\end{itemize}

Next,  color the edges of the cycles in the $(C_2, C_8)$-subgraphs alternately pink and black. Observe that the subgraphs $F_1,F_2,\ldots, F_s$ satisfy Conditions (A1) to (A3) of Lemma \ref{lem:2Kn-d}. 
\end{proof}

\begin{lem}{\label{lem:m=10-Samll4}}
Let $n=20k+5$ for any integer $k\geq 1$. There exists an HOP $(C_2, C_4, C_4)$-decomposition of $4K_n^{\bullet}$. 
\end{lem}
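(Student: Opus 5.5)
The plan is to apply Lemma~\ref{lem:2Kn-d} with $m_1=2$ and $m_2=m_3=4$, exactly as in the proof of Lemma~\ref{lem:m=10-Samll3}. Label $V(2K_n^{\circ})=\{x_i:i\in\ZZ_{20k+4}\}\cup\{x_\infty\}$. The differences are $1,\ldots,10k+1$ and $\infty$, each giving a pink and a black orbit of length $n-1$, together with the diameter difference $10k+2$, whose two orbits have length $\frac{n-1}{2}$. The number of required $(C_2,C_4,C_4)$-subgraphs is
\[
\frac{n(n-1)}{10\cdot\frac{n-1}{2}}=\frac{n}{5}=4k+1,
\]
so I will construct $s=4k+1$ subgraphs $F_1,\ldots,F_{4k+1}$ satisfying (A1)--(A3). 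Together they contain $8k+2$ four-cycles and $4k+1$ two-cycles.

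\textbf{Four-cycles.} The basic gadget is the $4$-cycle
\[
Q_i^{(j)} = x_j\ x_{j+i}\ x_{j+10k+2}\ x_{j+10k+2+i}\ x_j .
\]
Its two edges of difference $i$, namely $x_jx_{j+i}$ and $x_{j+10k+2}x_{j+10k+2+i}$, form a pair $\{e,\rho_\circ^{\frac{n-1}{2}}(e)\}$. Its two edges of difference $10k+2-i$ likewise form such a pair. Colour each $4$-cycle alternately pink and black, so every cycle has exactly two pink edges and (A1) holds. Opposite edges receive the same colour, so one copy of the gadget covers the pink pair of difference $i$ and the black pair of difference $10k+2-i$. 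A second copy with the colours swapped covers the complementary pairs. For each $i\in\{1,\ldots,4k+1\}$ I use two such $4$-cycles, one with each colouring. This gives $8k+2$ four-cycles, and they meet (A2) for all differences in $\{1,\ldots,4k+1\}\cup\{6k+1,\ldots,10k+1\}$. The translation parameter $j$ can be chosen freely for each cycle, since translating by $\rho$ keeps the covered orbit pairs unchanged.

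\textbf{Two-cycles.} The uncovered differences are $4k+2,\ldots,6k$ (these are $2k-1$ values, including $5k+1$), the difference $\infty$, and the diameter $10k+2$. Each of the $2k$ non-diameter differences needs four $2$-cycles. These are $x_ax_{a+d}x_a$ and its image under $\rho_\circ^{\frac{n-1}{2}}$; each $2$-cycle has one pink and one black edge, so (A1) is vacuous for it. This uses $8k$ two-cycles, and one further $2$-cycle $x_ax_{a+10k+2}x_a$ handles the diameter, giving (A3). The total is $4k+1$, as needed. I then group two $4$-cycles and one $2$-cycle into each $F_\ell$. Pairing the two colourings of the same $Q_i$ in one $F_\ell$ fails, because the two copies are the same vertex set. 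So I will pair $Q_i$ (first colouring) with $Q_{i'}$ (second colouring) for suitable $i\neq i'$, shifted so that their vertex sets are disjoint. One concrete choice is to take base points $j$ and $j'=j+c$, where $c$ avoids the $\le 8$ forbidden offsets.

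\textbf{Main obstacle.} The hard part is the explicit bookkeeping: choosing the shifts so that in every $F_\ell$ the two $4$-cycles and the $2$-cycle are pairwise vertex-disjoint, uniformly in $k$. I would first try to place all $4$-cycles with small base points, using vertices in $[x_0,x_{2k}]$ and their antipodes. I would then put all $2$-cycles on a fixed hub, for example $x_{-(4k+1)}$ and its antipode $x_{6k+1}$, as in Lemma~\ref{lem:m=10-Samll3}. Interval arguments like those in Lemmas~\ref{lem:m=10-Samll3} and~\ref{lem:Newtool-1b} would then certify disjointness, with $k=1$ checked directly if the general pattern breaks. With (A1)--(A3) verified, Lemma~\ref{lem:2Kn-d} gives the HOP $(C_2,C_4,C_4)$-decomposition of $4K_n^{\bullet}$.
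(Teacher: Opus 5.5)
Your approach is the paper's approach. The paper also applies Lemma~\ref{lem:2Kn-d} with $s=4k+1$ and uses exactly your gadget. Its $C_i=x_1\,x_{i+1}\,x_{-(10k+1)}\,x_{-(10k+1-i)}\,x_1$ is your $Q_i^{(1)}$, and $C_i'=x_{-1}\,x_{-(i+1)}\,x_{10k+1}\,x_{10k+1-i}\,x_{-1}$ is a translate of it with the swapped colouring. The paper covers $4k+2,\ldots,6k$ and $\infty$ by diametric pairs of $2$-cycles on the hubs $x_0$ and $x_{10k+2}$, covers the diameter by $x_0\,x_{10k+2}\,x_0$, and sets $F_i=C_i\mathbin{\dot{\cup}}C_i'\mathbin{\dot{\cup}}E_i$.

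Two corrections are needed.
\begin{itemize}
\item Your $2$-cycle count is inconsistent. Each of the $2k$ non-diameter differences needs four \emph{edges} (a pink and a black diametric pair). That is two $2$-cycles per difference, namely $x_a\,x_{a+d}\,x_a$ and its $\rho_\circ^{\frac{n-1}{2}}$-image. So there are $4k$ such $2$-cycles, not $8k$, and only then does the total come to $4k+1$.
\item Pairing the two colourings of the same $i$ inside one $F_\ell$ is fine as long as the base points differ. This is exactly what the paper does, so your restriction to $i\neq i'$ is unnecessary.
\end{itemize}

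The step you leave open (vertex-disjointness inside each $F_\ell$) is a gap in the write-up, but a simple counting argument closes it without any interval bookkeeping.
\begin{itemize}
\item The vertex set of each gadget is invariant under $x_j\mapsto x_{j+10k+2}$, and each gadget carries its own diametric pairs. So the two $4$-cycles of an $F_\ell$ can be translated together by any $t$ without affecting (A1)--(A3).
\item Fix all $2$-cycles first, for example on the paper's hubs.
\item For each $F_\ell$, choose the relative offset $c$ of its two $4$-cycles outside the at most $8$ forbidden values. This gives an $8$-vertex set $V_\ell$.
\item Then choose $t$ so that $V_\ell+t$ avoids the at most two finite vertices of $E_\ell$. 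At most $16$ values of $t$ are forbidden, while $n-1=20k+4\geq 24$.
\end{itemize}
This argument works uniformly for all $k\geq 1$. The paper's explicit choice collides at $i=5k$, where $x_{i+1}=x_{10k+1-i}$, so it needs a separate figure for $k=1$. In exchange, the paper's construction is fully explicit.
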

\begin{proof}  
Observe that $2n=10(4k+1)$; let $s=4k+1$. By Lemma \ref{lem:2Kn-d} with $m_1=2, m_2=m_3=4$, it suffices to find $(C_2, C_4, C_4)$-subgraphs $F_1,F_2,\ldots, F_s$ in $2K_n^{\circ}$ that satisfy Conditions (A1) to (A3).
The group $\langle \rho_{\circ} \rangle$ has the same orbits as those described in Lemma~\ref{lem:m=10-Samll3}.

For $k=1$, see Figure \ref{fig:s4}. For $k\geq 2$ and $i\in\{1,2,\ldots,4k+1\}$, construct the following two closed walks. 
$$ C_i= x_{1}\ x_{i+1}\ x_{-(10k+1)}\ x_{-(10k+1-i)}\ x_{1};$$
$$ C_i'= x_{-1}\ x_{-(i+1)}\ x_{10k+1}\ x_{10k+1-i}\ x_{-1}.$$
Notice that for \(i \in \{1, 2, \ldots, 4k+1\}\), \(C_i\) and \(C_i'\) are disjoint cycles of length four and traverse edges of differences \(i, 10k-i+2, i, 10k- i +2\). Observe that edges of the same difference occur in diametrical pairs. Hence, \(C_i\) and \(C_i'\), for \(i \in \{1, 2, \ldots, 4k+1\}\), jointly cover each of the following differences exactly four times:
$$1,2,\ldots, 4k+1, 6k+1, 6k+2,\ldots, 10k, 10k+1.$$
The differences not covered by any \(C_i\) or \(C_i'\) are $10k+2$,  $\infty$, and $4k+1+i$, for $i\in\{1,2,\ldots, 2k-1\}$. 
Next,  we construct cycles of length two that cover the leftover differences exactly four times, except for the diameter difference $10k+2$, which should be covered exactly twice. 
\begin{itemize}
\item The following 2-cycles cover the differences \(4k + 1 + j\), for \(j \in \{1, 2, \ldots, 2k - 1\}\), as well as the difference \(\infty\), exactly four times.
\begin{itemize}
\item  $E_j=x_{0}\ x_{-(4k+1+j)}\ x_{0}$, for $j\in\{1,2,\ldots, 2k-1\}$ , and  $E_{2k}=x_{0}\ x_{\infty}\ x_{0}$. 
\item  $E_j'=x_{10k+2}\ x_{6k+1-j}\ x_{10k+2}$, for $j\in\{1,2,\ldots, 2k-1\}$ , and  $E_{2k}'=x_{10k+2}\ x_{\infty}\ x_{10k+2}$. 
\end{itemize}
Notice that \(E_j' = \rho_{\circ}^{\frac{n-1}{2}}(E_j)\). Moreover, the vertices \(x_0\), \(x_{10k+2}\), and the vertices in the intervals \([x_{4k+2}, x_{6k}]\) and \([x_{-(6k)}, x_{-(4k+2)}]\) are not used in any \(C_i\) or \(C_i'\), for \(i \in \{1, 2, \ldots, 4k\}\), and only vertices in these intervals are used to construct graphs \(E_j\) and \(E_j'\). Hence, the subgraphs \(E_j\), \(E_j'\), \(C_i\), and \(C_i'\) are pairwise disjoint. 

Next, relabel \(E_j\) and \(E_j'\) jointly as \(E_i\), for \(i \in \{1, 2, \ldots, 4k\}\). Now, let \(F_i = C_i \mathbin{\dot{\cup}} C_i' \mathbin{\dot{\cup}} E_i\). We see that \(F_1, F_2, \ldots, F_{4k}\) are \((C_2, C_4, C_4)\)-subgraphs of \(2K_n\).

\item The only difference that is not covered yet is $10k+2$. Let $E_{4k+1}= x_{0}\ x_{10k+2}\ x_{0}$, and observe $E_{4k+1}$ is disjoint from $C_{4k+1}$ and $C'_{4k+1}$. Now, let $F_{4k+1}=C_{4k+1}\mathbin{\dot{\cup}} C_{4k+1}'\mathbin{\dot{\cup}} E_{4k+1}$. We see that $F_{4k+1}$ is a $(C_2, C_4, C_4)$-subgraph  of $2K_n$. 
\end{itemize}
Next, in each \((C_2, C_4, C_4)\)-subgraph, color the edges of the 2-cycle pink and black. Color the edges of one of the 4-cycles, say \(C_i\), alternately pink and black. Note that edges of the same length receive the same color. Then, color the edges of the other 4-cycle, \(C_i'\), in the opposite way of  \(C_i\) (see Figure~\ref{fig:s4}).

The \((C_2, C_4, C_4)\)-subgraphs \(F_1, F_2, \ldots, F_s\) satisfy Conditions (A1) to (A3) of Lemma~\ref{lem:2Kn-d}. 
\end{proof}  

\begin{figure}[p]
 \centering
   \includegraphics[width=\linewidth,height=0.95\textheight,keepaspectratio]{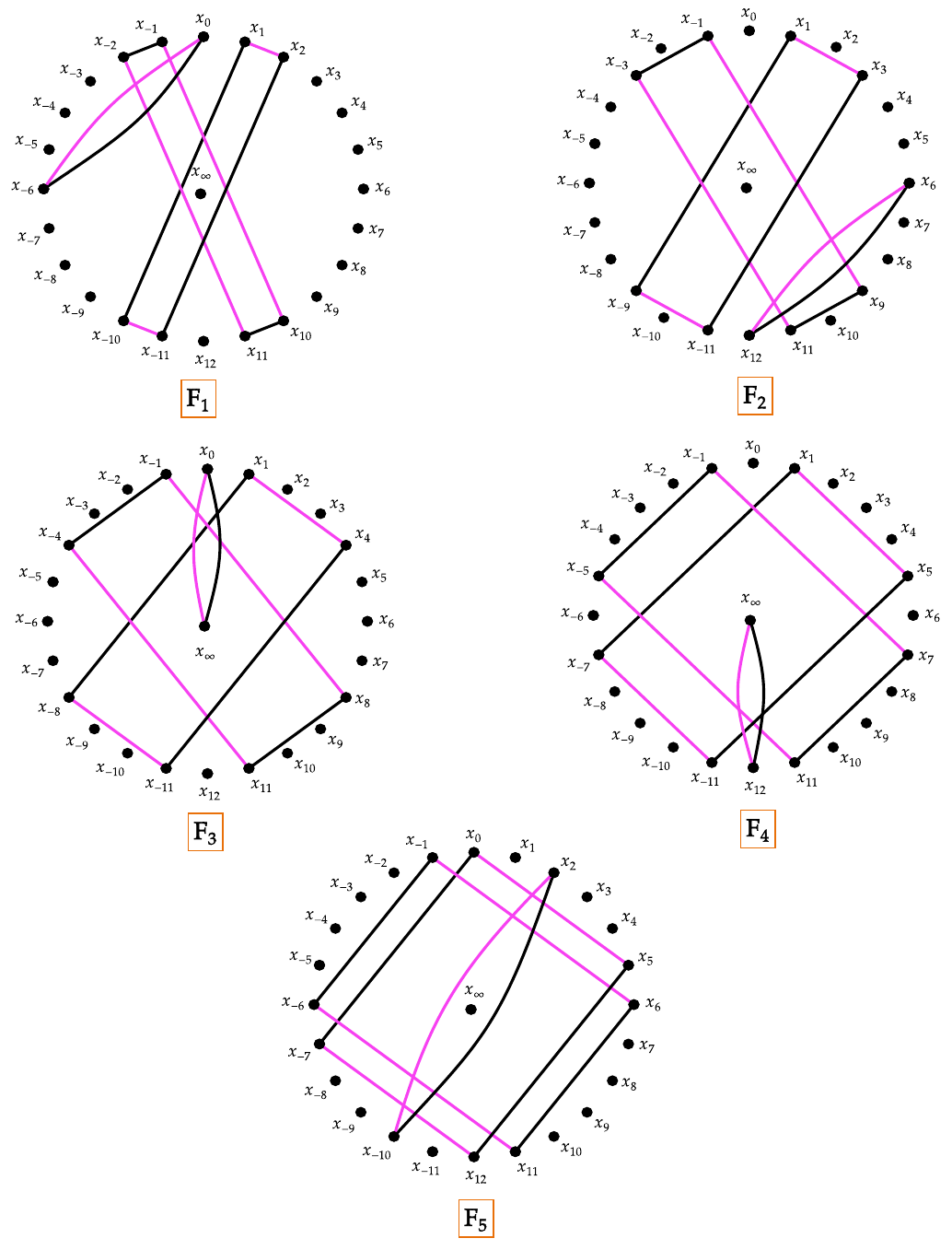}
    \caption{ $(C_2, C_4, C_4)$-subgraphs  of $2K_{25}^{\circ}$ that generate an HOP $(C_2, C_4, C_4)$-decomposition of $4K_{25}^{\bullet}$.}
     \label{fig:s4}
  \end{figure}
  

\begin{lem}{\label{lem:m=10-Samll5}}
Let $n=20k+5$ for any integer $k\geq 1$. There exists an HOP $(C_2, C_3, C_5)$-decomposition of $4K_n^{\bullet}$. 
\end{lem}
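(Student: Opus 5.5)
We will follow the pattern of Lemmas~\ref{lem:m=10-Samll3} and~\ref{lem:m=10-Samll4}. Set $s=4k+1$, so that $2n(n-1)/10 = (n-1)(4k+1)/ \ldots$; more precisely, we invoke Lemma~\ref{lem:2Kn-d} with $m_1=2$, $m_2=3$, $m_3=5$. It then suffices to construct $(C_2,C_3,C_5)$-subgraphs $F_1,\ldots,F_{4k+1}$ of $2K_n^{\circ}$ satisfying (A1)--(A3). We use the vertex set $\{x_i:i\in\ZZ_{n-1}\}\cup\{x_\infty\}$ and the orbit structure already described in Lemma~\ref{lem:m=10-Samll3}. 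There, differences $1,\ldots,10k+1$ and $\infty$ have pink and black orbits of length $n-1$, and the diameter difference $10k+2$ has orbits of length $\frac{n-1}{2}$. Counting gives $4(10k+2)+2=40k+10=10s$ edges to be covered, which matches $10$ edges per subgraph.

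The key difficulty compared with Lemma~\ref{lem:m=10-Samll4} is condition (A1). Odd cycles ($C_3$, $C_5$) must contain an even number of pink edges, so they cannot be colored alternately. Each $3$-cycle will therefore carry $0$ or $2$ pink edges, and each $5$-cycle $0$, $2$ or $4$ pink edges. The $2$-cycles have no parity constraint, so each may be colored pink--black, covering both colors of a difference at once. We also need every edge of a length-$(n-1)$ orbit to appear together with its antipodal partner $\rho_\circ^{\frac{n-1}{2}}(e)$ of the same color. The natural device, used before, is to pair subgraphs as $F$ and its antipodal image $\rho_\circ^{\frac{n-1}{2}}(F)$ with identical colorings. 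Each pair then covers two antipodal copies of every edge, and it remains to arrange that each difference $d\neq 10k+2$ is hit exactly twice in total across a ``half'' family (once pink and once black, or twice with the same color in the pair sense), in such a way that the pairs $(e,\rho^{\frac{n-1}{2}}(e))$ appear.

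Concretely, we plan $2k$ base subgraphs $G_1,\ldots,G_{2k}$ together with their antipodal images, giving $4k$ subgraphs, plus one special subgraph $F_{4k+1}$. Each $G_i$ consists of a $3$-cycle, a $5$-cycle and a $2$-cycle. Across the $G_i$, each difference in $\{1,\ldots,10k+1,\infty\}$ must appear exactly once pink and once black. Since a pink--black $2$-cycle handles a full difference by itself, the $2k$ two-cycles absorb $2k$ differences, including $\infty$ via $x_jx_\infty$. The $3$- and $5$-cycles of the $G_i$ then supply $16k$ edge slots for the remaining $8k+1$ differences (each needed twice, pink and black), minus what $F_{4k+1}$ takes. $F_{4k+1}$ is not paired. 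It must contain exactly one edge of difference $10k+2$ (using a diameter $2$-cycle $x_0x_{10k+2}x_0$ with one pink and one black edge, as in Lemma~\ref{lem:m=10-Samll4}). Its $3$- and $5$-cycles must use antipodally symmetric pairs $\{e,\rho_\circ^{\frac{n-1}{2}}(e)\}$ of the same color, which is achieved by choosing its $8$ edges to be invariant under the half-turn. A $C_3$ and a $C_5$ are not individually half-turn symmetric, however. We therefore expect instead to use $F_{4k+1}$ together with one antipodal pair, or to let $F_{4k+1}$ cover four differences via edges that are doubled up across its cycles, adjusting the count of the $G_i$ accordingly.

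The main obstacle is this bookkeeping: writing explicit cycles, in the style of $C_i=x_0\,x_{2i-1}\,x_{-1}\,\cdots$ from Lemma~\ref{lem:m=10-Samll2}, whose differences tile the required multiset with the right pink/black parities, while keeping the $2$-cycles vertex-disjoint from the cycles via unused intervals. We will first try to adapt the $n=20k+1$ construction of Lemma~\ref{lem:m=10-Samll2}, shifting it to $\ZZ_{20k+4}$, and fix the leftover differences (including $\infty$ and the diameter) with $2$-cycles. We will handle $k=1$ separately by an explicit figure if the general pattern fails for small $k$.
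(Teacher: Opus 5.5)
\textbf{There is a genuine gap.} You never construct any subgraphs, and the concrete plan you do state cannot be completed. That plan is $G_1,\ldots,G_{2k}$, their half-turn images, and one unpaired $F_{4k+1}$.

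The union of the $G_i$ and their images $\rho_\circ^{(n-1)/2}(G_i)$ is closed under $\rho_\circ^{(n-1)/2}$. This map acts freely on every orbit of length $n-1$, so by (A2) the paired part meets each such orbit either in nothing or in a full pair $\{e,\rho_\circ^{(n-1)/2}(e)\}$. Consequently the edges of $F_{4k+1}$ in that orbit are also closed under $\rho_\circ^{(n-1)/2}$, and diameter edges are fixed by it anyway. Hence $F_{4k+1}$ is invariant under the involution $x_i\mapsto x_{i+(n-1)/2}$, $x_\infty\mapsto x_\infty$. Its components have different sizes, so this involution maps its $3$-cycle and its $5$-cycle each to itself. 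An involution of a set of odd size has a fixed point, and the only fixed vertex is $x_\infty$, which cannot lie on both disjoint cycles. So a single unpaired subgraph is impossible, not merely awkward, and the alternatives you mention are left unspecified.

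Your idea of adapting Lemma~\ref{lem:m=10-Samll2} also clashes with (A1). That construction uses each difference once. To get both colors of each difference from it in $2K_n^{\circ}$, you would have to use each odd base cycle twice with complementary colorings. The two pink counts then sum to the odd cycle length, so one copy violates (A1).

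The missing idea is to leave the $2K_n^{\circ}$ framework and use Lemma~\ref{lem:4Kn-d}. That lemma develops base subgraphs of $4K_n^{\bullet}$ over all of $\ZZ_{n-1}$ and replaces the parity condition by (C1).

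For $k\ge 2$, the paper takes the shifted cycles $C_i=x_0\,x_{2i-1}\,x_{-3}\,x_{2k+i-1}\,x_{-(3k+i+2)}$ and $C_i'=x_{-(k+i+2)}\,x_{3k}\,x_{-(8k-i+6)}$, for $1\le i\le k$. Together with $2k$ two-cycles $E_i$, these cover every difference except $2$, $10k+1$ and $10k+2$ exactly once. Each base cycle is four-colored as in Lemma~\ref{lem:Gtool4}. Two copies of each $E_i$ are distributed among the resulting subgraphs $T_i^{(1)},\ldots,T_i^{(4)}$.

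The one remaining subgraph needs no symmetry. Its $5$-cycle $x_1\,x_{-1}\,x_{10k}\,x_{10k+2}\,x_{-10k}$ and $3$-cycle $x_0\,x_{10k+1}\,x_{-(10k+1)}$ are colored explicitly so that differences $2$ and $10k+1$ each occur once pink, once blue and twice as opposite black arcs. Its $2$-cycle on the diameter edge $x_{-(5k+1)}x_{5k+1}$ is handled by the recoloring built into Lemma~\ref{lem:4Kn-d}. The case $k=1$ is given by a figure.
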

\begin{proof}
Let $s=4k+1$. By Lemma \ref{lem:4Kn-d} with $m_1=2$, $m_2=3$, and $m_3=5$, it suffices to find $(C_2, C_3, C_5)$-subgraphs $F_1,F_2,\ldots, F_s$ in $4K_n^{\bullet}$ that satisfy Conditions (B1) to (B3).

As in Lemma \ref{lem:4Kn-d}, let $V(4K_n^{\bullet})=\{x_i: i\in \ZZ_{n-1} \}\cup \{x_{\infty}\}$.  Let $\rho_{\bullet}$ be the permutation on $E(4K_n^{\bullet})$ that preserves the color (and orientation) of the edges, and is induced by  the permutation  $\rho=(x_{\infty})(x_0 \ x_1\  x_2 \ \ldots \ x_{n-2})$. Observe that the group $\langle \rho_{\bullet} \rangle$ has the following orbits on the edge set of $4K_n^{\bullet}$:
\begin{itemize}
\item for each $d\in \{1,2,\ldots, 10k+1\}$, we have a pink and a blue orbit $\{x_ix_{i+d}: i\in \ZZ_{n-1} \}$;
\item for each $d\in \{1,2,\ldots, 20k+3\}$, we have a black orbit $\{(x_i, x_{i+d}): i\in \ZZ_{n-1} \}$;
\item a pink and a blue orbit $\{ x_ix_{i+(10k+2)}: i=0,1,\ldots,\frac{n-3}{2}\}$;
\item a pink and a blue orbit  $\{x_ix_{\infty}: i\in \ZZ_{n-1} \}$; and
\item black orbits $\{(x_{\infty}, x_{i}): i\in \ZZ_{n-1} \}$ and $\{(x_i, x_{\infty}): i\in \ZZ_{n-1} \}$.
\end{itemize}

First, we construct the 5-cycles and the 3-cycles of the $(C_2, C_3, C_5)$-subgraphs. 
For $k=1$ see Figure {\ref{fig:6-F4}}. For $k\geq2$ and $i\in\{1,2,\ldots, k\}$,  construct the following closed walks:
$$ C_i= x_{0}\ x_{2i-1}\ x_{-3}\ x_{2k+i-1}\ x_{-(3k+i+2)}\ x_{0}$$
$$ C_i'= x_{-(k+i+2)}\ x_{3k}\ x_{-(8k-i+6)}\ x_{-(k+i+2)}$$
It can be seen that $C_i$ and $C_i'$ are disjoint cycles of length five and three, respectively. 
\begin{figure}[p]
 \centering
   \includegraphics[width=\linewidth,height=0.95\textheight,keepaspectratio]{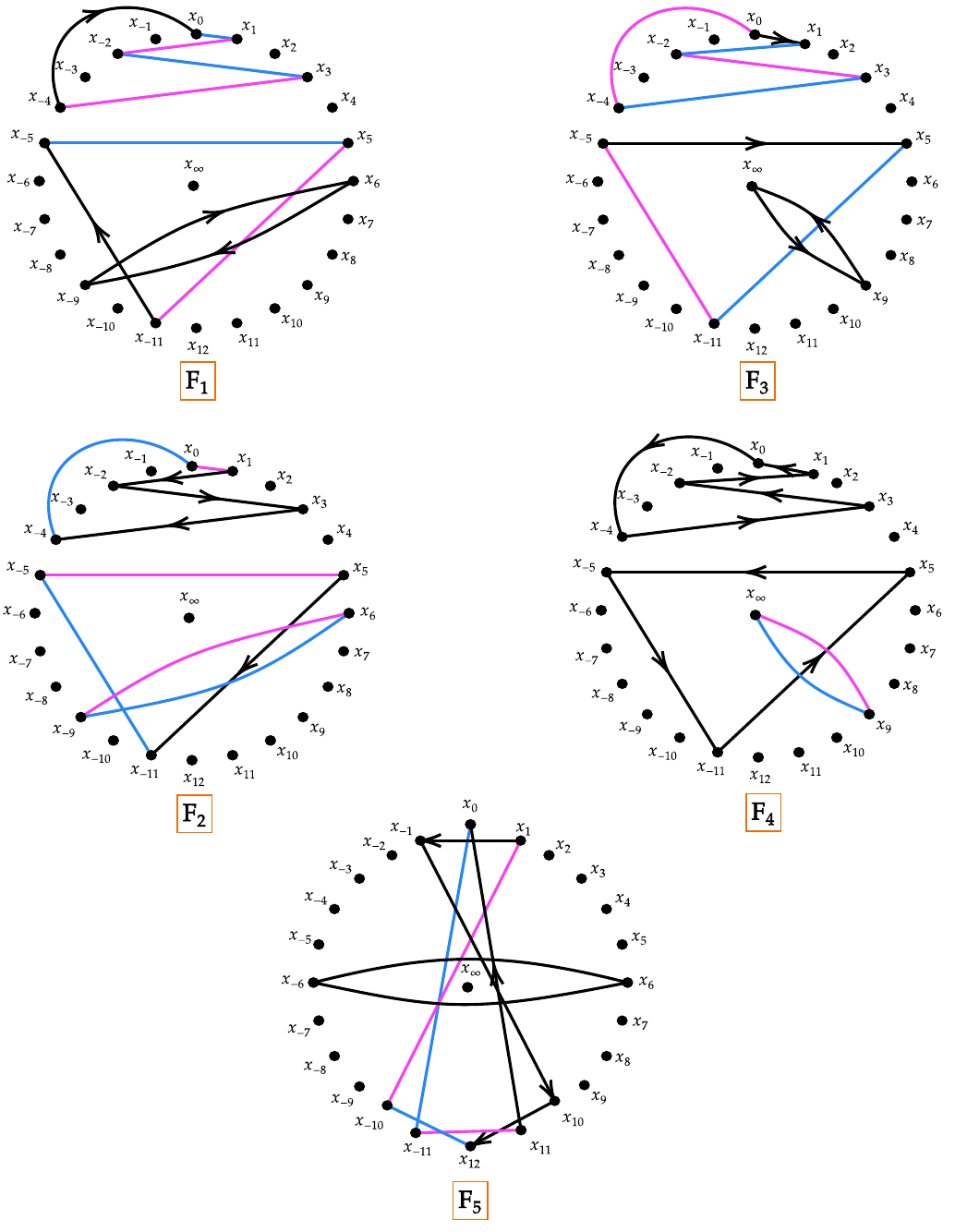}
    \caption{ $(C_2, C_3, C_5)$-subgraphs  of $4K_{25}^{\bullet}$ that generate  an HOP $(C_2, C_3, C_5)$-decomposition of $4K_{25}^{\bullet}$.}
    \label{fig:6-F4}
  \end{figure}
The 5-cycles \(C_i\) traverse edges of difference
\[
2i - 1, 2i + 2, 2k + i + 2, 5k + 2i + 1, 3k + i + 2.
\]
%
%
%
%
%
%
The 3-cycles $C_i'$  traverse edges of difference  
$$C_i': 4k+i+2, 9k+i-2, 7k-2i+4.$$ 
%
%
%
%
%
%
The differences that are not covered by any $C_i$ or $C_i'$ are $2, 2k+1, 10k-1, 10k, 10k+1, 10k+2, \infty$, and $7k+i+2$, for $i\in\{1,2,\ldots,2k-4\}$. 
The following $C_2$-subgraphs of $K_n$ cover these leftover differences, exactly once:
\begin{enumerate}[\bf(i)]
\item $E_i= x_{-(3k+2)}\ x_{4k+i}\ x_{-(3k+2)}$, for $i\in\{1,2,\ldots,2k-4\}$; 
\item $E_{2k-3}= x_{3k+1}\ x_{5k+2}\ x_{3k+1}$;
\item $E_{2k-2}= x_{-(3k+2)}\ x_{7k-3}\ x_{-(3k+2)}$;
\item $E_{2k-1}= x_{-(3k+2)}\ x_{7k-2}\ x_{-(3k+2)}$; and
\item $E_{2k}= x_{-(3k+2)}\ x_{\infty}\ x_{-(3k+2)}$.
\end{enumerate}
Note that neither the vertex \(x_{-(3k+2)}\) nor any of the vertices in the interval \([x_{3k+1}, x_{12k-2}]\) are used by any \(C_i\) or \(C_i'\), and only vertices in this interval are used to construct the 2-cycles \(E_i\). Hence, all \(C_i\) and \(C_i'\) are disjoint from the \(E_i\).

For each \(i \in \{1,2,\ldots,k\}\), let \(C_i^{(1)}, C_i^{(2)}, C_i^{(3)}, C_i^{(4)}\) be four copies of \(C_i\), and let \(C_i'^{(1)}, C_i'^{(2)}, \allowbreak C_i'^{(3)}, C_i'^{(4)}\) be four copies of \(C_i'\). For each \(i \in \{1,2,\ldots,2k\}\), let \(E_i^{(1)}\) and \(E_i^{(2)}\) be two copies of \(E_i\). Now, apply the proof of Lemma~\ref{lem:Gtool4} to color the copies of each cycle so that they satisfy Condition {\bf (C1)} of Definition~\ref{def}, and so that
\begin{itemize}
\item \(C_i^{(1)} \oplus C_i^{(2)} \oplus C_i^{(3)} \oplus C_i^{(4)} = 4C_i^{\bullet}\),
\item \(C_i'^{(1)} \oplus C_i'^{(2)} \oplus C_i'^{(3)} \oplus C_i'^{(4)} = 4C_i'^{\bullet}\), and
\item \(E_i^{(1)} \oplus E_i^{(2)} = 4E_i^{\bullet}\).
\end{itemize}
Now, let $T_i^{(1)},T_i^{(2)},T_i^{(3)},T_i^{(4)}$ be the following subgraphs:
 $$T_i^{(1)}= C_i^{(1)}\cup C_i'^{(1)}\cup E_i^{(1)};$$
 $$T_i^{(2)}= C_i^{(2)}\cup  C_i'^{(2)}\cup E_{k+i}^{(1)};$$
 $$T_i^{(3)}= C_i^{(3)}\cup C_i'^{(3)}\cup  E_i^{(2)};$$
 $$T_i^{(4)}= C_i^{(4)}\cup C_i'^{(4)}\cup E_{k+i}^{(2)}.$$
It is easy to see that  $T_i^{(1)},T_i^{(2)},T_i^{(3)},T_i^{(4)}$ are all $(C_2, C_3, C_5)$-subgraphs of $4K_n^{\bullet}$ that satisfy Condition {\bf (C1)} of Definition \ref{def}. We apply the above procedure for each $i\in\{1,2,\ldots,k\}$,  so we have $4k$ $(C_2, C_3, C_5)$-subgraphs,  which we relabel as  $F_1, F_2, \ldots, F_{s-1}$. 

Observe that except for  differences $2$, $10k+1$, and $10k+2$, all other differences occur exactly four times (exactly one pink, one blue, and two black with opposite directions) in $F_1, F_2, \ldots, F_{s-1}$. 

Next, we construct a $(C_2, C_3, C_5)$-subgraph $F_s$ that covers differences $2$, $10k+1$, and $10k+2$. Let $F_s=C_s\mathbin{\dot{\cup}} C_s'\mathbin{\dot{\cup}} E_s$, where:
\begin{itemize}
\item $C_s= x_{1}\ x_{-1}\ x_{10k}\ x_{10k+2}\ x_{-10k}\ x_{1}$; 
\item $C_s'= x_{0}\ x_{10k+1}\  x_{-(10k+1)}\ x_{0}$; and
\item $E_s= x_{-(5k+1)}\ x_{5k+1}\ x_{-(5k+1)}$.
\end{itemize}
Color the edges of $C_s$ as follows: $x_{10k+2}\ x_{-10k}$ blue, $x_{-10k}\ x_{1}$ pink, and $x_{1}\ x_{-1}$, $\ x_{-1}\ x_{10k}$, $x_{10k}\ x_{10k+2}$ black with orientation away from the pink edge and towards the blue edge. 
Color the edges of $C_s'$ as follows: $x_{10k+1}\  x_{-(10k+1)}$ pink, $x_{-(10k+1)}\ x_{0}$ blue, and $x_{0}\ x_{10k+1}$ black with orientation away from the pink edge and towards the blue edge (see Figure \ref{fig:6-F4}). 
Notice that $C_s$ and $C_s'$ jointly cover differences $10k+1$ and $2$ exactly four times (one pink edge, one blue edge, and two opposite  black arcs), and $E_s$ covers difference $10k+2$ twice. 
Observe that the subgraphs $F_1,F_2,\ldots, F_{s}$ satisfy Conditions (B1) to (B3) of Lemma \ref{lem:4Kn-d}. Hence, $4K_n^{\bullet}$ admits an HOP $(C_2, C_3, C_5)$-decomposition.
\end{proof}

\section{Conclusion}

In this paper, we proved in Theorem~\ref{theo-new-1} that $\mathrm{HOP}(2^{\langle s \rangle}, 2m_1, 2m_2)$ has a solution whenever $n \equiv 1 \pmod{(2m_1 + 2m_2)}$ or $n \equiv m_1 + m_2 \pmod{(2m_1 + 2m_2)}$. In Theorem~\ref{theo:HOP(Cm1,...,Cmt)-dec}, we also considered the case where $m = m_1 + m_2 + \dots + m_t \leq 10$, and proved that $\mathrm{HOP}(2^{\langle s \rangle}, 2m_1, \dots, 2m_t)$ has a solution whenever $n = s + m$ is odd and $n(n - 1) \equiv 0 \pmod{2m}$.

Observe that in Theorems~\ref{theo-new-1} and~\ref{theo:HOP(Cm1,...,Cmt)-dec} we had specific conditions on $n$, but the obvious necessary conditions are more general. So it is natural to ask whether these results hold more generally. Hence, we propose the following problems:

\vspace{0.2cm}

\noindent {\bf{Problem 1.}}
Let $s \geq 0$, and let $2 \leq m_1 \leq m_2$ be integers. 
Assume $m = m_1 + m_2$. 
Is it true that $\mathrm{HOP}(2^{\langle s \rangle}, 2m_1, 2m_2)$ has a solution if and only if 
$2n(n - 1) \equiv 0 \pmod{m}$?

\vspace{0.25cm}

\noindent {\bf{Problem 2.}} 
Let $s \geq 0$, and let $2 \leq m_1 \leq \dots \leq m_t$ be integers. 
Assume $m = m_1 + \dots + m_t \leq 10$. 
Is it true that $\mathrm{HOP}(2^{\langle s \rangle}, 2m_1, \dots, 2m_t)$ has a solution if and only if 
$2n(n - 1) \equiv 0 \pmod{m}$?

\section{Acknowledgments}

The author would like to thank her PhD supervisor, Dr. Mateja \v{S}ajna, for her invaluable guidance and support during this research.

\clearpage

\end{document}